\newcommand{\1}{{\mathchoice {\rm 1\mskip-4mu l} {\rm 1\mskip-4mu l}{\rm 1\mskip-4.5mu l} {\rm 1\mskip-5mu l}}}
\DeclarePairedDelimiterX\Basics[1](){ #1}
\newcommand{\T}{\mathbb{R}_+}
\newcommand{\blue}[0]{\textcolor{black}}
\newcommand{\dd}{
		\mathop{}\mathopen{}\mathrm{d}
	}
\newtheorem{thm}{Theorem}[section]
\newtheorem{prop}{Proposition}[section]
\newtheorem{assumption}{}
\newtheorem{corollary}{Corollary}[thm]
\newcommand{\footremember}[2]{%
    \footnote{#2}
    \newcounter{#1}
    \setcounter{#1}{\value{footnote}}%
}
\newcommand{\E}{\mathbb{E}}
\newcommand{\N}{\mathbb{N}}
\newcommand{\R}{\mathbb{R}}
\newcommand{\kinf}{||k||_{t,\lambda}}
\newcommand{\kinfI}{||k||_{I,\lambda}}
\newcommand{\St}{\mathbb{S}}
\newcommand{\Zt}{Z_{t,b_m}}
\newcommand{\one}{\mathds{1}}
\newtheorem{definition}{Definition}[section]
\newtheorem{remark}{Remark}[section]
\newtheorem{lemma}{Lemma}[section]
\title{Nonparametric hazard rate estimation with associated kernels and minimax bandwidth choice.}
\author{Luce Breuil \footremember{CMAP}{CMAP, CNRS, École polytechnique, Institut Polytechnique de Paris, Inria, Palaiseau, France; E-mail : luce.breuil@polytechnique.edu} \and  Sarah Kaakaï  \footremember{P13}{LAGA, CNRS, Université Sorbonne Paris Nord, Villetaneuse, France  E-mail : kaakai@math.univ-paris13.fr.}
\footremember{ERC}{\footnotesize The research of Sarah Kaakai was funded by the European Union (ERC, SINGER, 101054787). Views and opinions expressed are however those of the author(s) only and do not necessarily reflect those of the European Union or the European Research Council. Neither the European Union nor the granting authority can be held responsible for them.}}
\date{}
\begin{document}
\maketitle

\begin{abstract}
In this paper, we introduce a general theoretical framework for nonparametric hazard rate estimation using associated kernels, whose shapes depend on the point of estimation. Within this framework, we establish rigorous asymptotic results, including a second-order expansion of the MISE, and a central limit theorem for the proposed estimator. We also prove a new oracle-type inequality for both local and global adaptive bandwidth selection, extending the Goldenshluger–Lepski method to the context of associated kernels.  Our results   propose a systematic way to construct and analyze new associated kernels. Finally, we show that the general framework applies to the Gamma kernel, and we provide several examples of applications on simulated data and experimental data for the study of aging. 
\end{abstract}

%\tableofcontents

\noindent \textbf{Keywords :}  Adaptive estimation, Aging, Associated kernel estimator,  Hazard rate, Goldenshluger Lepski method,  Nonparametric estimation, Oracle inequality.

\section{Introduction}

In many fields, the ability to assess the rate at which events occur, often called the hazard rate, is of central importance. In survival analysis, hazard rate estimation plays a crucial role in demography and biology, for instance in studying disease occurrence, or the influence of genetics factors, environmental conditions, or medical treatments on the risk of death. Hazard rate estimation also arises in various fields including economics, finance, reliability, or insurance. 
This paper is also motivated by biological applications in aging, and particularly the 2-phases model of aging  introduced in \cite{tricoire_new_2015}. This model is based on the biological evidence that drosophila flies  present a sharp decline of several health indicators prior to their death, a behavior which was since then observed in several organisms \cite{ Cansell2025.06.14.659669,rera_intestinal_2012,ageing_cell}. 
Estimating accurately the rates of transition between states is therefore essential to better understand the underlying  biological mechanisms.

When the shape of the hazard rate function is unknown, nonparametric approaches can be particularly useful for estimating the hazard rate without prior knowledge. Kernel estimators are among the most widely used nonparametric estimators. They were first introduced  for density estimation \cite{Rosenblatt_ker_dens}, and most existing results on kernel estimators deal with density estimation. 
However, the theory on density  can be extended to hazard rate estimation by considering a ratio estimator defined as a kernel density estimator over a survival function estimator. 
First order equivalents of the variance and expectation for this ratio kernel hazard rate estimator, along with a central limit theorem, were first obtained in \cite{hazard_1,Hazard2}. These results have been extended in the presence of censoring  in \cite{Lo1985DensityAH}.  Another approach consists in smoothing the increments of the piecewise constant Nelson-Aalen estimator of the cumulated hazard. The Nelson-Aalen estimator for i.i.d observed times $(\tau_i)_{1\leq i \leq m}$ is given by \eqref{eq:nelson-aalen} (see e.g. \cite{andersen_statistical_1993}): 
\begin{align}
    &\Hat{H}_m(t) = \sum_{\tau_i \leq t} \frac{1}{m-N_{\tau_i^-}}, \text{ with }  N_t = \sum_{i=1}^{m} \one_{\{ \tau_i \leq t\}}.   \label{eq:nelson-aalen}
\end{align}
The smoothed hazard estimator is then  defined by
\begin{equation}
\label{kernelestimator}
   \hat{k}_m(t) =  \sum_{i\geq 1} \frac{1}{m-N_{\tau_i^-}} \kappa_{t,b}(\tau_i), 
\end{equation}
where $\kappa_{t,b}$ is a kernel, converging to a Dirac measure as the bandwidth $b$ goes to $0$. This estimator is particularly relevant as it is easier to implement than the ratio estimator, and is numerically faster compared to the ratio estimator as well as more robust \cite{Patil01011994,Comp_estim_Uzu_92}. Furthermore, its expectation can be exactly computed unlike that of the ratio estimator \cite{Patil01011994,Rosenblatt_rice_76, smooth_wang_14}.

The kernel estimator \eqref{kernelestimator}  was introduced and shown to be unbiased in \cite{Hazard2}, where a first-order asymptotic approximation of its variance is also provided. These pointwise results on the expectation and variance were later extended to the case of censored observations in \cite{Tanner_Wong_HR_TCL}, along with a central limit theorem. In a more general framework based on counting processes, \cite{Ramlau_haz} established the convergence of the mean squared error and asymptotic normality. A higher-order expansion of the bias was obtained in \cite{Yandell}.
Global results concerning the convergence of the mean integrated squared error (MISE), including higher-order approximations, can be found in the context of general counting processes with multiplicative intensity in \cite{andersen_statistical_1993}. Hazard rate kernel estimators have also been studied under various dependence conditions, see, e.g., \cite{IZENMAN1990233,ROUSSAS198981}.  

These results apply for the most common kernels usually considered, which are defined by
\begin{equation}
   \forall (t,y) \in \R^2, \qquad \kappa_{t,b}(y) = \frac{1}{b}\kappa\left(\frac{t-y}{b}\right), \label{eq:sym_ker_est}
\end{equation}
where $\kappa$ is a symmetric function integrating to 1. 
An important issue with  estimators based on symmetric kernels, such as defined by \eqref{eq:sym_ker_est},  is that they fail to estimate correctly functions with compact supports (or supports bounded on one end) at the end point(s), see e.g. \cite{Boundary_kernel} or \cite{Hess1999HazardFE}. This is the case when estimating a hazard rate for which the support is a subset of $\R_+$. If the hazard does not vanish at 0 or near the boundary of the support, it is critical to use an estimator that does not introduce bias. This situation can occur when examining factors causing a high initial mortality, or for instance when taking into account infant mortality. In \cite{Orava2011KnearestNK},  a nearest neighbor bandwidth choice was proposed, combined with standard kernels. For the density estimation problem with bounded support, other approaches include  Lagrange, Laguerre and Bernstein polynomials estimators \cite{Laguerre_density,2020_Helali,Bernstein_vitale}, or boundary modifications \cite{boundary_modif,transform}.

In the early 2000s, Chen introduced new kernel functions $\kappa_{t,b}$ (Beta and Gamma kernels) in order to solve the boundary problem, initially for densities supported on $[0,1]$ and $\R_+$ \cite{beta_kernel,Gamma_kernel}, and with shapes depending on the point  $t$ at which they are evaluated. In particular, the kernels can be asymmetric for $t$ close to the  support boundary. Over the past two decades, several kernels have been introduced and studied independently,  including the reciprocal inverse Gaussian (RIG) kernel \cite{RIG_kernel}, Weibull \cite{Salha_weibull}, Erlang \cite{Erlang_Salha2014}, or see also \cite{seven_asym_ker,Boundary_kernel,Salha02092023} for other examples.  These so-called associated kernels are particularly efficient as they are both easy to implement and, in their multiplicity, provide solutions for different estimation problems depending on the support or the shape of the underlying function. They have been vastly used in various fields such as agronomics, biology, climate, finance, insurance, or medicine, and have become a standard practical method to estimate density and hazard rate without boundary bias.

However, theoretical convergence results for associated kernels have been mostly obtained for the density estimation problem, and separately for specific kernels. For instance, Chen in \cite{Gamma_kernel}, Scaillet in \cite{RIG_kernel} and Bertin and Klutchnikoff in \cite{minimax_Beta} provide first order asymptotic equivalents for the MISE. The ratio-type hazard rate estimator for the Weibull, Erlang and Lognormal kernels have been studied in \cite{Salha_weibull,Erlang_Salha2014,Salha02092023}. These works demonstrate asymptotic normality for each specific kernel, but no asymptotic equivalent for the bias or variance of the ratio estimator has been obtained. In contrast, very limited results exist for the hazard estimator \eqref{kernelestimator} with  associated kernels. To our knowledge, only \cite{Bouez_gamma_haz} obtained results for the Gamma kernel.

Despite the significant interest in associated kernels, there is a lack of a unified theoretical framework and results.  For instance, \cite{seven_asym_ker} investigates seven associated kernels independently for the cumulative distribution function, and \cite{l_block_CV} studies three different kernels for density estimation. A more general framework is proposed in \cite{Esstafa2022AsymptoticPO}, in the case of discrete probability distributions.  More recently, the continuous case has been addressed for density estimation in  \cite{esstafa:hal-04112846}, where first-order results on the MISE and asymptotic normality are established.

A key ingredient of kernel estimation is the choice of bandwidth, which can significantly impact the quality of the estimator. Various methods exist, such as  cross-validation \cite{Bandwidth_kernel_crossval,Crossval_Rudemo_82} or local bandwidth selection procedures \cite{Mller1990LocallyAH,Orava2011KnearestNK}. 
In their seminal paper
\cite{GL_11}, Goldenshluger and Lepski introduced and studied an adaptive minimax bandwidth selection method for density estimation. This procedure allows to choose an optimal bandwidth without a priori knowledge of the underlying regularity of the estimated function, thus automatically achieving optimal convergence rate. It also allows for a data driven local bandwidth choice.  This method has been vastly studied in the case of density estimation with classical kernels (see e.g. \cite{Bertin_2016,Doumic_2012,GL_11,LACOUR20163774}), \blue{and for boundary kernels in \cite{Bertin_adaptive}. An adaptive bandwidth choice for density estimation with the Beta kernel is developed in \cite{Bertin_adaptive_beta}}. In \cite{recurrent_event_Bouaziz},  results  were obtained on intensity estimation for recurrent event processes for classical kernels on a compact support, which is more restrictive than the associated kernel framework we propose to study. To our knowledge, no result on an adaptive bandwidth choice with associated kernel exists for hazard rate estimation.\\[2mm]

\indent In this paper, we first provide a unified framework for hazard rate estimation using associated kernels. We introduce general assumptions, under which we prove rigorous results, including an asymptotic expansion for the MISE, and asymptotic normality. 
These results include the few existing results, and extend them to any associated kernel verifying our assumptions. The general setting  also allows us to avoid some of the tedious computations when studying a particular kernel. By giving assumptions that should be verified by the kernel, we  provide a checklist of how to construct such a kernel for hazard rate estimation, and a better overall understanding of the relevance of such kernels and their key properties.

 We then introduce an adaptive bandwidth choice for hazard rate kernel estimators with associated kernels. The lack of assumptions on the exact dependence of the kernel in $t$ and $b$ prevents from using the convolution functional classically considered for adaptive bandwidth choice.
As we consider kernels that do not have bounded supports, the study of this method introduces some theoretical challenges. In particular, results on density kernel estimators cannot be as easily extended to hazard estimation.

We also present numerical results on simulated and real data to compare the performance of this estimator to other kernel estimators. In particular, using the experimental data taken from \cite{tricoire_new_2015}, we show that the death rate is very high for drosophila which have just undergone the transition to a physiologically aged state, and then decreases, a phenomenon which was not captured by kernel estimators using standard kernels.

We first present the theoretical setting and introduce kernel hazard rate estimation and associated kernels in Section \ref{sec:the_frame}. We then state and prove in Section \ref{sec:conv_results} the convergence of the mean integrated square error of the estimator as well as an asymptotic equivalent, by finding equivalents for the bias and variance. We also prove asymptotic normality of the hazard rate associated kernel estimator. Secondly, we prove an oracle type inequality for an adaptive bandwidth selection method in our framework in Section \ref{sec:minimax}, both in a pointwise and global setting.  Finally, we  provide some numerical examples on simulated and experimental data in Section \ref{sec:num_simul}.

\section{Settings}

\label{sec:the_frame}
\subsection{Hazard rate kernel estimation}
Let $(\Omega, \mathcal{F}, \mathbb P)$ be the probability space. 
We consider $m$  i.i.d event time observations $(\tau_i)_{1\leq i \leq m}$. 
The cumulative distribution function (cdf) of the random variables $\tau_i$, defined on its support $\R_+$, is denoted by $F$. We assume that the distribution admits a probability density function (pdf) $f$ and a hazard rate $k$, and recall that:
\begin{align}
    &k(t) = \frac{f(t)}{1-F(t)}, \quad  F(t)= \mathbb{P}(\tau \leq t) = 1 - e^{-\int_0^t k(u) \dd u}, \quad f(t) = k(t)e^{-\int_0^t k(u) \dd u}, \quad \forall t\geq 0. 
\end{align}
For the remainder of the paper, we assume that the hazard rate $k$
\blue{is in a Hölder space (see e.g.  \cite{tsybakov_nonparametric_2009}). In the following, $\lfloor \beta \rfloor$ denotes the floor function of $\beta$. 
\begin{definition}
\label{def:holder}
 Let $\beta > 0$ and $L > 0$. We denote by $\Sigma(\beta, L)$ the Hölder class  of functions on $\R_+$, meaning that for all $k \in \Sigma(\beta, L)$, $k \in C^{\lfloor \beta \rfloor}(\R ^+)$ and for  $l=\lfloor \beta \rfloor $
$$\left|k^{(l)}(t+z) - k^{(l)}(t)\right| \leq L|z|^{\beta - l}, \quad \forall z,t \in \R_+.$$
\end{definition}
}

The event times can be represented by the counting process $N$, defined by: 
\begin{align*}
   N_t = \sum_{i=1}^{m} \one_{\{ \tau_i \leq t\}}, \quad \forall t \geq 0,  
\end{align*}
with  $(\tau_i)_{1\leq i \leq m}$ the sequence of unordered event times. Let $(\mathcal{F}_t)_{t\geq 0}$ be the natural filtration generated by the counting process. In the framework of i.i.d $(\tau_i)_{1\leq i \leq m}$, $N$ admits the $(\mathcal{F}_t)$- multiplicative intensity $ (k(t)(m -N_{t^-})) \one_{N_{t-} < m}$.

The Nelson-Aalen estimator (see e.g. \cite{andersen_statistical_1993}) provides a nonparametric estimator for the cumulative hazard rate $A(t) = \int_0^t k(s)ds $ in the multiplicative intensity setting, given in our framework by 
\begin{equation*}
   \hat{H}_m(t) =  \sum_{\tau_i\leq t} \frac{1}{m - N_{\tau_i^-}}. 
\end{equation*}
A smooth estimator $\hat{k}_m$ of the hazard rate itself can be derived from the previous equation: 

\begin{equation}
   \hat{k}_m(t) =  \sum_{i\geq 1} \frac{1}{m-N_{\tau_i^-}} \kappa_{t,b}(\tau_i) = \int_0^{+\infty} \frac{\kappa_{t,b}(s)}{m-N_{s^-}} \one_{\{N_{s-} < m \}}dN_s,  \label{eq:k_estim}
\end{equation}
with $\kappa_{t,b}$ a kernel function which converges to the delta Dirac  at $t$ when $b$ (the bandwidth) goes to zero. 
Note that the estimator $\hat{k}_m$ can be rewritten as 
\begin{equation} \label{k:other_form}
    \hat{k}_m(t) = \frac{1}{m}\sum_{i=1}^m \frac{\kappa_{t,b}(\tau_i)}{1-\hat{F}_m(\tau_i)}\end{equation} 
with $\hat{F}_m(x) = \frac{1}{m}\sum_{i=1}^m \one_{\{\tau_i < x\}}.$\\

The most common kernels usually considered are defined as in \eqref{eq:sym_ker_est}, 
with $\kappa$ a symmetric positive function integrating to 1,  such as the Gaussian, rectangle, triangle or Epanechnikov kernels (see \cite{tsybakov_nonparametric_2009}). 

Note that the dependence of classical kernels in $t$ only comes down to a translation of the kernel, and the parameter $b$ only affects its standard deviation. The symmetry of classical kernels also ensures that $t$ and $y$ are interchangeable in equation \eqref{eq:sym_ker_est}.  
The explicit dependence of the kernel $\kappa_{t,b}$ in $t$ and $b$ also facilitates the proof of convergence results by using changes of variables in order to rely only on properties of $\kappa$ (see \cite{andersen_statistical_1993,Tanner_Wong_HR_TCL, hazard_1, Hazard2}). This allows to get general results that do not depend on the point of estimation $t$.

\subsection{Continuous associated kernels} 

In this paper, we adopt the more general framework of associated kernels for which the shape of the kernel depends on the point of estimation and which are particularly adapted for resolving boundary bias when estimating on bounded supports. We recall below the general definition of associated kernels, as  introduced in \cite{esstafa:hal-04112846,adjm/1545361441}:

\begin{definition}[Associated kernel]\label{def:cont_ass}
Let $b>0$ be the bandwidth. An associated kernel $(\kappa_{t,b})$ is a parametrized probability density function $\kappa_{t,b}$ defined on its support $\St \subseteq \mathbb{R}_+$ \blue{such that for any $y \in \St, t \mapsto \kappa_{t,b}(y)$ is continuous} and verifying for all  $t \in \St$ 
    \begin{equation}\label{eq:asymp_def}
 \Lambda(t,b) := \E(Z_{t,b}) - t\xrightarrow[b \rightarrow 0]{}0 \quad \text{and} \quad Var(Z_{t,b}) \xrightarrow[b \rightarrow 0]{}0,
\end{equation}
where $Z_{t,b}$ denotes the random variable with pdf $\kappa_{t,b}$. In particular, this ensures that $Z_{t,b} \xrightarrow[b \rightarrow 0]{L^2} t$.

\end{definition}
\textbf{Notation} We denote by $||.||_{\infty}$ the $L_\infty$ norm on $\St$. 

\begin{remark}  
The vast majority of kernels $(\kappa_{t,b})$ can be defined on a support independent of $t$ and $b$. 
This is for example true for the Gamma and Beta kernels, log-normal and Weibull kernels \cite{esstafa:hal-04112846}. However, the results presented here can be straightforwardly extended to a case where the support depends on $t$ and/or $b$ provided 
\begin{equation*}
    \forall t \in \T, t \in \St_{t,b} \text{ and } \forall x \in \R, t \mapsto \one_{\{\St_{t,b}\}}(x) \text{ is continuous in } t.
\end{equation*}
As  the support of the associated kernel $\St$ and the support of the hazard rate $\R_+$ are dissociated, the results we present are valid for $t \in \St$ (for the Beta kernel for example, $\St = [0,1]$). \\\blue{In the case where $\St \subsetneq  \R_+$, we consider for ease of notations  that $\kappa_{t,b}$ can be extended by $0$ to a $\mathcal{C}^0$ function on $\R_+$. }
\end{remark}

The Gamma kernel (without interior bias), introduced in \cite{Gamma_kernel}, is an example of associated kernel. The Gamma kernel $(\kappa_{t,b})$ is the density function of a Gamma distribution of parameters $\rho(t)_b$ and $b$. It is defined by

\begin{definition}[Gamma kernel without interior bias]\label{def:gamma}
   For $t \geq 0$ and $b > 0$, the Gamma kernel $(\kappa_{t,b})$ is defined by   
   \begin{equation}
           \kappa_{t,b}(y) = \frac{y^{\rho(t)_b - 1}e^{-y/b}}{b^{\rho(t)_b}\Gamma(\rho(t)_b)}\one_{\{y\geq 0\}} \label{eq:Gamma_kernel_1}
    \end{equation}
    where 
    \begin{equation}
    \rho(t)_b = 
    \begin{cases}
         &t/b \text{ if } t \geq 2b\\
         &\frac{1}{4}(t/b)^2 + 1  \text{ if } 0 \leq t < 2b.
    \end{cases}
    \label{eq:rho_gamma_1}
    \end{equation}
\end{definition}

The shape of the Gamma kernel for different values of $t$ is shown on Figure \ref{fig:gamma_ker}, where one can see that the Gamma kernel has a support on $\R_+$ and is asymmetric for $t$ close to $0$, unlike the Gaussian kernel (see Figure \ref{fig:gauss_ker}), which is symmetric and is defined on $\R$. 
\begin{figure}[H]
     \centering
        \begin{subfigure}{0.48\linewidth}
    \includegraphics[scale = 0.25]{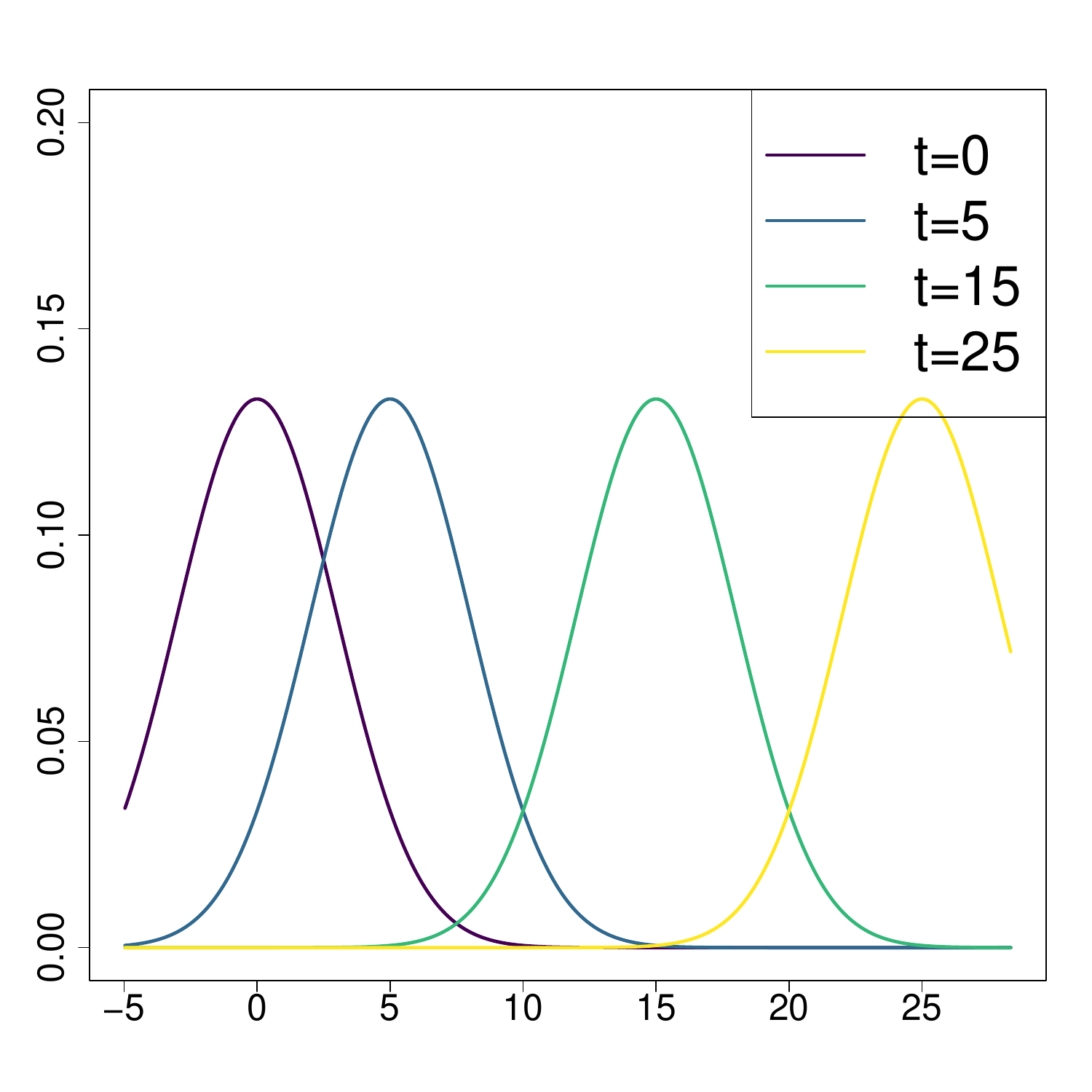}
    \caption{Shape of the Gaussian kernel for different values of $t$ with a fixed bandwidth $b=0.1$.}
    \label{fig:gauss_ker}
    \end{subfigure}
     \begin{subfigure}{0.48\linewidth}
    \includegraphics[scale = 0.25]{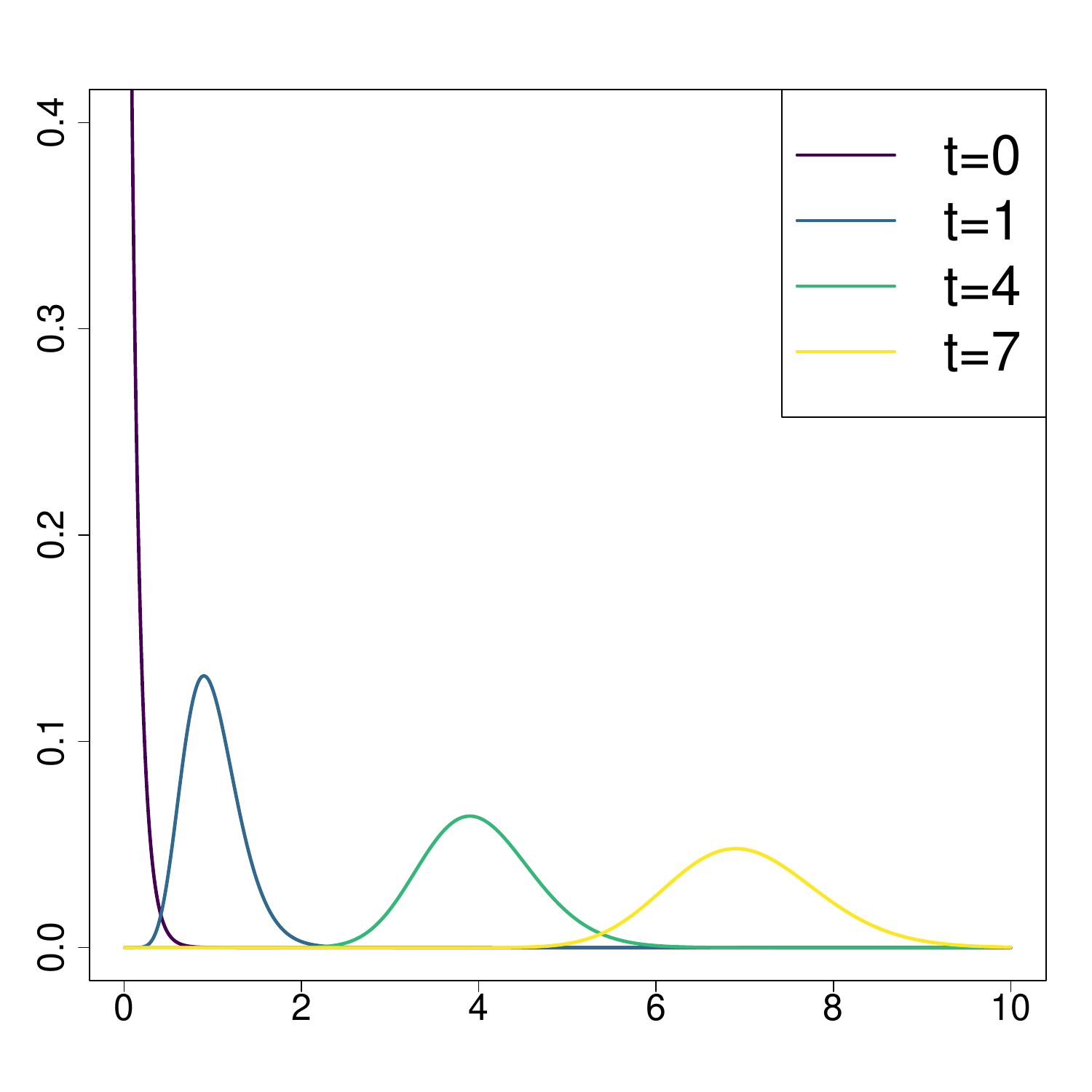}
    \caption{Shape of the Gamma kernel for different values of $t$ with a fixed bandwidth $b=0.1$.}
    \label{fig:gamma_ker}
    \end{subfigure}
\end{figure}

We can now introduce the notion of associated kernel of order $(\beta,\gamma)$, generalizing the standard definition to associated kernels. The parameter $\gamma$ is a scaling parameter that is also involved in several quantities depending on the kernel.

\begin{definition}[Associated kernel of order $(\beta,\gamma)$]
\label{def:order_beta}
Let $(\kappa_{t,b})$ be an associated kernel as defined in Definition \ref{def:cont_ass} and  $\beta > 0$, $\gamma>0$. 
The kernel  is called of order $(\beta,\gamma)$ if  for all integers $n$ such that $1 \leq n \leq \lfloor \beta \rfloor$, 
\begin{equation}\label{assump:gamma_1}
 \exists C_n \in L^{\infty}_{\text{loc}}(\R_+,\R_+^*),  \forall t \;  \in \St, \; \forall \; b \geq 0, \quad  \E[(Z_{t,b} -t)^n] \leq C_n(t) b^{\gamma \beta}.
\end{equation}
\end{definition}
\blue{Definition \ref{def:order_beta} requires that all moments $n\leq \lfloor \beta \rfloor$ of $Z_{t,b}$ have the same order $O(b^{\gamma\beta})$. In particular, when $\beta=2$, this means that   the bias $\Lambda(t,b)$ and variance  $Var(Z_{t,b})$ terms should be of order $O(b^{2\gamma})$. Similar assumptions are made in \cite{esstafa:hal-04112846} for an associated kernel density estimator when $\beta=2$, but the bias term is only assumed to be of order $O(b^{\gamma})$, which does not allow to reach the optimal asymptotic rate of convergence obtained in Theorem \ref{thm:MSE_k} and Theorem \ref{thm:MISE_k} in Section \ref{sec:conv_results}.}

\paragraph{Assumptions} For an associated kernel $(\kappa_{t,b})$ of order $(\beta,\gamma)$, we now introduce the following assumptions, which are used to prove the results of Sections \ref{sec:results} and \ref{subsec:normal}. 
We denote $L^{\infty}_{\text{loc}}(E,H)$ the set of functions from $E$ to $H$ bounded on any compact set.  The first assumption is specific to associated kernels, as it is trivially verified by classical kernels, and is needed to compute with precision the rest term in the equivalent of the Mean Square Error (MSE), but can be omitted for first order results, as in the case of density kernel estimation (\cite{esstafa:hal-04112846}). 

Note that if $(\kappa_{t,b})$ is an associate kernel of order $(\tilde{\beta},\gamma)$ verifying \ref{ass:order} with $\tilde{\beta}\geq \beta$, $(\kappa_{t,b})$ is also of order $(\beta, \gamma)$ and verifies \ref{ass:order}  for these parameters. 
\begin{assumption} \label{ass:order}
The kernel $\kappa$ verifies
\begin{equation*}
     \forall \eta >0, \quad b^{-\beta \gamma} \int_{\{|y-\E[Z_{t,b}]|> \eta\}} \kappa_{t,b}(y)|y-\E[Z_{t,b}]|^{\beta} \dd y \xrightarrow[b\rightarrow0]{}0
\end{equation*}
where $Z_{t,b}$ denotes the random variable with pdf $\kappa_{t,b}$.
\end{assumption}

\begin{assumption}\label{assump:gamma_sup}$\exists b_0 > 0, \exists C_s \in L^{\infty}_{\text{loc}}(\R_+,\R_+^*)\forall t \in \T, \forall b \leq b_0$, 
\begin{equation}
  \sup_{y \in \St}(\kappa_{t,b}(y)) \leq  C_s(t) b^{-\gamma}.
    \end{equation}
\end{assumption}

The following assumption is specific to hazard rate estimation, but is necessary even for classical kernels (see \cite{Hazard2}). However, we present here a weaker assumption than the one in \cite{Hazard2} since it requires that the condition is true for only one $\lambda$,
an assumption easier to prove for the Gamma kernel. Note that this assumption depends both on the kernel and the survival function of the hazard we are estimating \blue{and comes down to assuming that the tails of distribution of $\kappa_{t,b}$ are of the same order as $y \mapsto e^{-\int_0^y k(u) \dd u}$. }

\begin{assumption}\label{assump:compat}
   $F$ and $\kappa_{t,b}$ are compatible i.e. there exists $\lambda>0$ such that for any $t \in \R_+$,  $\exists \;  b_0 >0$ and $G(t) \in L^{\infty}_{\text{loc}}(\R_+, \R_+^*)$, such that
    \begin{equation}
    \forall b \leq b_0, \forall y \in \St, |y-t|>\lambda ,\implies \frac{\kappa_{t,b}(y)}{1-F(y)}  < G(t).\label{eq:compatibility_k}
\end{equation}  
\end{assumption}

The next assumption is needed to bound the mean integrated square error (MISE) in Theorem \ref{thm:MISE_k}, but is not necessary for pointwise results. 
\begin{assumption}\label{assump:unif_int}
   For any fixed $b$ and any compact set $I$,
   \begin{align}
       & 
       \sup_{t\in I} (\kappa_{t,b}(y)) :=  \psi_{I,b}(y) \text{  and  }  \sup_{t\in I}(\kappa_{t,b}(y)^2) := \phi_{I,b}(y)
   \end{align}
   are integrable functions. 
\end{assumption}

Finally, we introduce the following notations: 
\begin{equation}
     \alpha_b(t) :=\int_{\St} \kappa_{t,b}(y)^2 \dd y \qquad  \beta_b(t) := \int_{\St} \kappa_{t,b}(y)^3 \dd y. \label{eq:alpha_beta}
\end{equation}

\begin{assumption}\label{assump:gamma_2_inf}$\exists b_0 >0, \exists \underline{C}_3, \underline{C}_4  \in L^{\infty}_{\text{loc}}(\R_+,\R_+^*) \forall t \in \T, \forall b \leq b_0$, 
\begin{align}
  & \underline{C}_3(t)b^{-\gamma}\leq  \alpha_b(t)\\
  & \underline{C}_4(t)b^{-2\gamma}\leq  \beta_b(t)
    \end{align}
\end{assumption}

\begin{remark}\label{rem:alpha_beta_bound}
 Note that Definition \ref{def:cont_ass} and Assumption \ref{assump:gamma_sup} directly imply that for $b \leq b_0$ 
    \begin{align*}
        &\alpha_b(t) \leq C_s(t) b^{-\gamma}\\
        &\beta_b(t) \leq C_s(t)^2 b^{-2\gamma}. 
    \end{align*}
\end{remark}

The following proposition ensures that the Gamma kernel verifies the assumptions, the proof is postponed to the Appendix (Section \ref{sec:proof_2.1}).

\begin{prop}\label{prop:gamma}

    \blue{The Gamma kernel as defined in Definition \ref{def:gamma} is an associated kernel of order $(\beta, \gamma)$ with $\beta =2$, $\gamma = \frac{1}{2}$ and $\mathbb{S} = \R_+$.} Furthermore, for all $t\geq 0$, 
    \begin{equation*}
        \Lambda(t,b) = (t^2/(4b) +b) \one_{\{t\leq 2b\}}, \quad \text{Var}(Z_{t,b}) = b\one_{\{t>2b\}}+ (t^2/4+b^2) \one_{\{t\leq 2b\}},  
    \end{equation*}
  and Assumptions \ref{ass:order} to \ref{assump:gamma_2_inf}  are verified.
\end{prop}

\blue{ The previous assumptions are also verified by the RIG, Lognormal and Weibull kernels for $\beta = 2$ and $\gamma= 1 \text{ or } \frac{1}{2}$. We refer the reader to Proposition \ref{prop:other_kernels} in the Appendix. }\\

We start with a technical result which will be useful in other proofs.
\blue{
\begin{lemma} \label{lemma:conv_pointwise}
    Let $t \in \St$ and $Z_{t,b} $ be a random variable of  pdf $\kappa_{t,b}$.
    Suppose that  $(\kappa_{t,b})$ is an associated kernel of order $(\beta,\gamma)$, then $\exists \; C(t) >0$, such that for any $M >0$, 
    \begin{equation}
        \mathbb{P}(|Z_{t,b} - t| \geq M) = \int_{|y-t| \geq M } \kappa_{t,b}(y) \dd y \leq \frac{C(t)}{M^{\lfloor\beta\rfloor}}b^{\beta \gamma}.
    \end{equation}
\end{lemma}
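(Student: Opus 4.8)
The plan is to reduce the statement to Markov's inequality applied to the nonnegative random variable $(Z_{t,b}-t)^2$, after controlling its expectation by a bias--variance decomposition. Concretely, writing $\Lambda(t,b) = \E[Z_{t,b}] - t$ as in Definition \ref{def:cont_ass}, one has the exact identity
\begin{equation*}
\E\big[(Z_{t,b}-t)^2\big] = \mathrm{Var}(Z_{t,b}) + \Lambda(t,b)^2 .
\end{equation*}
Assumption \ref{assump:gamma_1} bounds both terms on the right-hand side by a multiple of $b^{2\gamma}$ as soon as $b\le 1$: the variance by $C_2(t)\,b^{2\gamma}$ by \eqref{ass:a3}, and $\Lambda(t,b)^2$ by $C_1(t)^2\, b^{2\gamma}$ by \eqref{ass:a2}. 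Hence $\E[(Z_{t,b}-t)^2]\le \big(C_1(t)^2 + C_2(t)\big)\,b^{2\gamma}$.

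Second, I would apply Markov's inequality to $(Z_{t,b}-t)^2$: for any $\lambda>0$,
\begin{equation*}
\mathbb{P}(|Z_{t,b}-t|\ge \lambda) = \mathbb{P}\big((Z_{t,b}-t)^2 \ge \lambda^2\big) \le \frac{\E[(Z_{t,b}-t)^2]}{\lambda^2} \le \frac{\big(C_1(t)^2+C_2(t)\big)\,b^{2\gamma}}{\lambda^2},
\end{equation*}
which is the claim with $C(t) := C_1(t)^2 + C_2(t)$; since $C_1,C_2$ take values in $\R_+^*$ we indeed have $C(t)>0$ (and in fact $C \in L^{\infty}_{\text{loc}}(\R_+,\R_+^*)$, although only pointwise positivity is needed here). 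As the statement is phrased in terms of the bandwidth sequence $b_m$, this is read for $b_m\le 1$, i.e. for $m$ large enough, which is all that is used in the sequel.

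There is essentially no obstacle in this argument — it is the textbook Chebyshev estimate. The only points deserving a line of care are to invoke Assumption \ref{assump:gamma_1} only in the regime $b\le 1$ in which it is stated, and to note that the first displayed identity is the usual decomposition $\E[(X-t)^2] = \mathrm{Var}(X) + (\E[X]-t)^2$ applied with $X = Z_{t,b}$. (An equivalent route, slightly longer, splits according to whether $|\Lambda(t,b)| \le \lambda/2$ or not: in the first case the triangle inequality gives $|Z_{t,b}-t|\ge\lambda \Rightarrow |Z_{t,b}-\E[Z_{t,b}]|\ge\lambda/2$ and one concludes by Chebyshev and \eqref{ass:a3}; in the second case $\lambda^2 < 4C_1(t)^2 b^{2\gamma}$ makes the bound exceed $1$, hence trivially true. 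I prefer the one-line Markov version above.)
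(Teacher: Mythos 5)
Your proof is correct and follows essentially the same route as the paper: Markov's inequality applied to $(Z_{t,b}-t)^2$, followed by the bias–variance decomposition $\E[(Z_{t,b}-t)^2]=\mathrm{Var}(Z_{t,b})+\Lambda(t,b)^2$ and Assumption \ref{assump:gamma_1}. The only cosmetic difference is that you state this decomposition as an exact identity, whereas the paper writes it as an inequality, which changes nothing.
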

\begin{proof}
Let $l = \lfloor \beta \rfloor$. We have by Markov's inequality
\begin{align*}
  \mathbb{P}(|Z_{t,b} - t| \geq M) \leq \frac{\E[|Z_{t,b} - t|^l]}{M^l}
   & \leq   \frac{C_l(t) b^{\beta \gamma}}{M^l}.
   \end{align*}
\end{proof}
}

\section{Convergence results}\label{sec:conv_results}
Recall that the hazard rate kernel estimator introduced in equation \eqref{eq:k_estim} is given for all $t\in \St$ by
\begin{equation*}
   \hat{k}_m(t) =  \sum_{i\geq 1} \frac{1}{m-N_{\tau_i^-}} \kappa_{t,b}(\tau_i),
\end{equation*}
with $(\kappa_{t,b})$ an associated kernel and $(N_t)_{t\geq 0}$ the counting process counting the events' occurrences.

\subsection{Convergence of the mean integrated square error}

A measure of the quality of the estimator is given by the mean integrated square error (MISE) (\cite{Bandwidth_kernel_crossval}) defined on a compact set $I \subset \St $ by
\begin{equation}
    \text{MISE}(b) = \E\left[\int_I(\hat{k}_m(t) - k(t))^2 \dd t \right] = \int_I \E[\hat{k}_m(t) - k(t)]^2+Var(\hat{k}_m(t))  \dd t \label{eq:MISE_def}
\end{equation} 
where the right hand side of \eqref{eq:MISE_def} is the decomposition of the error in the bias and variance terms, for which asymptotic equivalents will be shown.\\

We start by showing that the estimator is asymptotically unbiased, and prove  a non-asymptotic inequality on the bias of the estimator. The proofs for the results of this section are gathered in Section \ref{sec:proofs_MISE}. 

\label{sec:results}
In the following, we consider a sequence $(b_m)_{m\geq 1}$ such that 
$$b_m \xrightarrow[m \rightarrow + \infty]{}0.$$

\begin{prop}    
\label{prop:inequ}
Let $\hat{k}_m$ be defined by \eqref{eq:k_estim} with a kernel verifying Definition \ref{def:cont_ass}.

Suppose $k \in \Sigma(\beta, L)$ and assume either that $k$ is bounded, or that Assumption \ref{assump:compat} is verified. We have for $t \in \St$, 
 \begin{equation}
      \E[\hat{k}_m(t)] =\int_{\St} (1-F(y)^m)k(y) \kappa_{t,b_m}(y) \dd y \xrightarrow[m \rightarrow + \infty]{} k(t) \label{eq:cvgce_exp_0_k}. 
 \end{equation}
Hence $\hat{k}_m(t)$ is asymptotically unbiased.

 Furthermore, we have the following inequality, which is not asymptotic as it holds for any $ m \geq 1 $: for any $b>0$, $t \in \St$, 
 \begin{align}
    |\E[\hat{k}_m(t)] - k(t)|  &\leq\sum_{s=1}^{l} \frac{|k^{(s)}(t)|}{s!}\E[ (Z_{t,b}-t)^s] + \frac{L}{l!}\E[|Z_{t,b}-t|^{\beta}]+  F(t+\lambda)^m  \sup_{y\in [t-\lambda, t+\lambda]}(k(y)) + U_{b,m}(t). \label{eq:bound_nas}
\end{align}
Where
\begin{itemize}
     \item $\lambda >0 $ and $ U_{b,m}(t) = ||k||_{\infty} \mathbb{P}(|Z_{t,b} - t| \geq \lambda)$ if $k$ is bounded. 
\item $U_{b,m}(t) = \frac{G(t)}{m+1}$ if Assumption \ref{assump:compat} is verified, and $ G(t) >0$ and $\lambda$ are defined in Assumption \ref{assump:compat}. 
 \end{itemize}
\end{prop}
\color{black}

The following proposition precises the result of Proposition \ref{prop:inequ} and gives an asymptotical equivalent of the bias of the estimator. This result will also be used to prove the equivalent of the MISE further on. 
\blue{
\begin{prop}[Bias]\label{thm:cvgce_exp_k}
Suppose $k\in \Sigma(\beta, L)$ for some $\beta ,L  >0$ and $k$ is bounded. 
Let $\hat{k}_m$ be defined by \eqref{eq:k_estim} with an associated kernel $(\kappa_{t,b})$ of order $(\beta,\gamma)$, assumed to verify Assumption \ref{ass:order}. If $\exists \;  \mu >0$ such that $b_m^{\gamma}m^\mu \rightarrow +\infty$, we have the following asymptotic expansion for $t \in \St$
 \begin{equation}
     \E[\hat{k}_m(t)] -k(t) =   \sum_{s= 1}^{l } \frac{k^{(s)}(t)}{s!}|\E[(\Zt - t)^{s}]| + o(b_m^{\gamma \beta}).    \label{eq:bound_exp}
 \end{equation}
In particular, the convergence rate is of $O(b_m^{ \gamma \beta})$.
 \end{prop}
}

\begin{remark}
    The assumption that $k$ is bounded in Proposition \ref{thm:cvgce_exp_k} can be replaced by Assumption \ref{assump:compat}. In that case, the expansion of the bias has a term of order $\frac{1}{m}$ which is not negligible compared to $b_m^{\gamma \beta}$. However, as the variance is of order $\frac{1}{mb_m^{\gamma}}$ (see Proposition \ref{lemma:var_eq_k}), the compatibility assumption does not change results of Theorem \ref{thm:MSE_k} and \ref{thm:MISE_k} for the order of convergence of the MSE and MISE.
\end{remark}
\color{black}
\blue{Contrary to the case of standard symmetric kernels of order $\beta$, the terms in the sum on the r.h.s. of \eqref{eq:bound_exp} do not vanish. However, Definition \ref{def:order_beta} for associated kernels ensures that all the terms in the sum are of the same order.}\\
The assumption that $\mathbb{S} \subset \R_+$ in Definition \ref{def:cont_ass} ensures that the estimator is asymptotically unbiased for any $t \in \St$. 
    Indeed, when $\St \not \subset \R_+$, if  $\exists \mu >0$ such that $\forall b>0, \int_{\St \backslash \T} \kappa_{0,b}(y) \dd y > \mu$, and $k(0) >0$, then
    \begin{align}
         \liminf_{m\rightarrow + \infty} &|\E[\hat{k}_m(0)]-k(0)| \label{eq:bias_symp} \\
         &=   \liminf_{m\rightarrow + \infty} \Big|\int_{\R_+} (k(y)-k(0))\kappa_{0,b_m}(y) \dd y - \int_{\R_+}k(y)F(y)^m\kappa_{0,b_m}(y) \dd y - k(0)\int_{\St \backslash \T} \kappa_{0,b_m}(y) \dd y \Big| \nonumber\\ 
        &=\liminf_{m\rightarrow + \infty} k(0) \int_{\St \backslash \T} \kappa_{0,b_m}(y) \dd y \geq \mu k(0) > 0, \nonumber
    \end{align}
    as the first two terms go to $0$ (we refer the reader to the proof of Proposition \ref{thm:cvgce_exp_k} for details of the computations). Hence the asymptotic bias is strictly positive. 
    This explains why symmetric kernels defined on $\R$  can be unfit to estimate data defined on $\R_+$, a fact that is well-known. 
    As an illustration, when considering a strictly positive constant hazard rate $k$ defined on $\R_+$ and estimated with a symmetric kernel estimator, we have 
    \begin{align*}
        \E[\hat{k}_m(0)]& = k \int_{\R_+} \kappa_{0,b_m}(y) \dd y - \int_{\R_+}(1-e^{-ky})^m\kappa_{0,b_m}(y) \dd y \xrightarrow[ \substack{m\rightarrow + \infty}]{} \frac{1}{2}k \neq k. 
    \end{align*}
    Hazard rate estimation with a symmetric kernel can result in a relatively good estimation if the hazard rate vanishes near $0$.
    
Furthermore, for $t>0$ and fixed $b$, there is an extra term $k(t) \int_{\St \backslash \R_+}\kappa_{t,b}(y) \dd y$ in the bias expression (as detailed for $t=0$ in \eqref{eq:bias_symp}). Hence the more the kernel is supported outside of $\R_+$, the higher the additional error compared to kernels supported in $\R_+$. 
     When working with classical symmetric kernels where $\kappa_{t,b}$ is compactly supported on $[t-b,t+b]$, the convergence results consider $b$ small enough such that $[t-b,t+b]$ is included in the domain of the hazard rate  (see e.g. \cite{andersen_statistical_1993, recurrent_event_Bouaziz}).  This is also the argument used in \cite{esstafa:hal-04112846} (in the proof of Proposition 3.1 for example).  Note that this argument does not apply at 0 for symmetric kernels, which is the reason why the results presented in \cite{andersen_statistical_1993, recurrent_event_Bouaziz} do not apply at $0$.

\paragraph{}The next proposition states the convergence of the estimator in probability and gives an asymptotic equivalent of the variance term of the MISE.
 \begin{prop}[Variance and consistency]
\label{lemma:var_eq_k}
Let $\hat{k}_m$ be defined by \eqref{eq:k_estim} with a kernel verifying Definition \ref{def:cont_ass}. Assume that $k\in \Sigma(\beta,L)$. Under Assumptions \ref{assump:gamma_sup}, \ref{assump:compat} and \ref{assump:gamma_2_inf}, if $b_m^{\gamma}m \rightarrow +\infty$ we have for all $t \in \St$,
\begin{align}
    Var(\hat{k}_m(t))  =\frac{\alpha_{b_m}(t)}{m}\frac{k(t)}{1-F(t)}  +
    o\left(\frac{1}{mb_m^\gamma}\right)
\end{align}
with $\alpha_{b_m}(t)$ defined in equation \eqref{eq:alpha_beta}, \blue{recalling that $\alpha_{b_m}(t) = O(b_m^{-\gamma})$ by Remark \ref{rem:alpha_beta_bound}}.\\
In particular, the worst convergence rate is 
$O(\frac{1}{mb_m^\gamma})$ and $\hat{k}_m(t)$ is a consistent estimator of the hazard rate $k(t)$.  
\end{prop}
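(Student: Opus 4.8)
The plan is to work from the martingale representation of $\hat k_m(t)$. Writing $\hat k_m(t) = \int_0^\infty \frac{\kappa_{t,b_m}(s)}{m-N_{s^-}}\one_{\{N_{s^-}<m\}}\, dN_s$ and using the Doob–Meyer decomposition $dN_s = (m-N_{s^-})k(s)\,ds + dM_s$ with $M$ an $(\mathcal F_t)$-martingale, I split $\hat k_m(t)$ into a predictable part $\int_0^\infty \kappa_{t,b_m}(s)\one_{\{N_{s^-}<m\}} k(s)\,ds$ (which is deterministic-looking but still involves the random indicator) and the martingale term $\int_0^\infty \frac{\kappa_{t,b_m}(s)}{m-N_{s^-}}\one_{\{N_{s^-}<m\}}\,dM_s$. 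Since the predictable part is exactly the object controlled in Propositions 4.1 and 4.3 up to the $F(y)^m$ correction, the variance is driven by the martingale term, whose predictable quadratic variation is $\int_0^\infty \frac{\kappa_{t,b_m}(s)^2}{(m-N_{s^-})^2}\one_{\{N_{s^-}<m\}}(m-N_{s^-})k(s)\,ds = \int_0^\infty \frac{\kappa_{t,b_m}(s)^2}{m-N_{s^-}}\one_{\{N_{s^-}<m\}} k(s)\,ds$.

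Next I would take expectations using the identity $\E\!\left[\frac{\one_{\{N_{s^-}<m\}}}{m-N_{s^-}}\right] = \frac{1-F(s)^m}{m(1-F(s))}$ (valid since $N_{s^-}\sim\mathrm{Bin}(m,F(s^-))$, and $\E[(m-N_{s^-})^{-1}\one_{\{N_{s^-}<m\}}]$ telescopes), to get
\begin{equation*}
\E\!\left[\langle \widetilde M\rangle_\infty\right] = \int_{\St} \kappa_{t,b_m}(s)^2\, \frac{1-F(s)^m}{m(1-F(s))}\,k(s)\,ds = \frac{1}{m}\int_{\St}\kappa_{t,b_m}(s)^2\,\frac{k(s)}{1-F(s)}\,ds + \text{(exp. small)},
\end{equation*}
where the $F(s)^m$ term is handled by Assumption \ref{assump:compat} (compatibility) on $\{|s-t|>\lambda\}$ and is negligible on $\{|s-t|\le\lambda\}$ because $F(s)\le F(t+\lambda)<1$ there. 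Then I would compare $\int_{\St}\kappa_{t,b_m}(s)^2\frac{k(s)}{1-F(s)}\,ds$ with $\alpha_{b_m}(t)\frac{k(t)}{1-F(t)}$: setting $g(s)=k(s)/(1-F(s))$, which is continuous, the difference is $\int_{\St}\kappa_{t,b_m}(s)^2\big(g(s)-g(t)\big)\,ds$; I would bound $|g(s)-g(t)|$ by a modulus-of-continuity argument, splitting at $|s-t|\le\eta$ versus $|s-t|>\eta$, using Assumption \ref{assump:gamma_sup} ($\sup\kappa_{t,b_m}\le C_s(t)b_m^{-\gamma}$) together with Lemma \ref{lemma:conv_pointwise} on the far region and the bound $\alpha_{b_m}(t)\le C_s(t)b_m^{-\gamma}$ on the near region, concluding that this difference is $o(\alpha_{b_m}(t)) = o(b_m^{-\gamma}/1)$ hence $o(1/(mb_m^\gamma))$ after dividing by $m$; here Assumption \ref{assump:gamma_2_inf} ($\alpha_{b_m}(t)\ge \underline C_3(t)b_m^{-\gamma}$) is what makes $o(\alpha_{b_m}(t))$ and $o(1/(mb_m^\gamma))$ genuinely equivalent.

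Finally I would assemble the variance: $\mathrm{Var}(\hat k_m(t)) = \mathrm{Var}(\text{predictable part}) + \mathrm{Var}(\widetilde M_\infty) + 2\,\mathrm{Cov}(\cdot,\cdot)$. The martingale term contributes $\E[\langle\widetilde M\rangle_\infty] - (\E[\widetilde M_\infty])^2$; since $\widetilde M$ is a martingale null at $0$ its expectation vanishes, giving exactly $\frac{\alpha_{b_m}(t)}{m}\frac{k(t)}{1-F(t)}+o(1/(mb_m^\gamma))$. The remaining terms — the variance of the predictable part and the covariance — must be shown to be $o(1/(mb_m^\gamma))$; this is where I expect the main technical obstacle, since the predictable part contains the random factor $\one_{\{N_{s^-}<m\}}$ and one must show its fluctuations around the deterministic integral $\int \kappa_{t,b_m}(s)k(s)\,ds$ are small enough, using that $\mathbb P(N_{s^-}=m)=F(s^-)^m$ is exponentially small on the relevant region of $s$ (again via compatibility and $F(t+\lambda)<1$), and the condition $b_m^\gamma m\to\infty$ to beat the $b_m^{-\gamma}$ blow-up of $\alpha_{b_m}$. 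Consistency then follows immediately by combining the variance rate $O(1/(mb_m^\gamma))\to 0$ with the bias bound of Proposition \ref{prop:inequ}, via $\E[(\hat k_m(t)-k(t))^2] = \mathrm{Var}(\hat k_m(t)) + (\E[\hat k_m(t)]-k(t))^2 \to 0$ and Chebyshev.
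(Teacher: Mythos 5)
Your plan takes a genuinely different route from the paper: you use the counting-process/martingale machinery (Doob--Meyer, predictable quadratic variation), in the spirit of Andersen--Borgan--Gill--Keiding or Ramlau--Hansen, whereas the paper works from the exact variance formula of Tanner--Wong, which is derived by ordered-statistics arguments and stated as equation \eqref{eq:var_exact_k}. The paper then kills the double-integral cross term by Lemma \ref{lemma:var_2term1} (invoking Lemmas 11--12 of \cite{Hazard2}) and treats the remaining single-integral term by a near/far split around $t$ together with the uniform convergence $mI_m(y) \to (1-F(y))^{-1}$. Your martingale route is plausible and, if carried through, would give an independent derivation; but as written it has two real problems.

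First, the identity $\E\!\left[\frac{\one_{\{N_{s^-}<m\}}}{m-N_{s^-}}\right] = \frac{1-F(s)^m}{m(1-F(s))}$ is false. With $N_{s^-}\sim\mathrm{Bin}(m,F(s^-))$ the left-hand side is $I_m(s) := \sum_{j=0}^{m-1}\binom{m}{j}\frac{F(s)^j(1-F(s))^{m-j}}{m-j}$, and one can check already for $m=2$, $F=1/2$ that $I_2 = 5/8 \neq 3/4 = \frac{1-F^2}{2(1-F)}$. What is true, and what both the paper and \cite{Hazard2} (Lemma 6) use, is that $mI_m(s)\to (1-F(s))^{-1}$ uniformly on regions where $F$ stays away from $1$ — exactly the content of Lemma \ref{lemma_var_F}. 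Your asymptotic conclusion survives if you replace the exact identity by this uniform convergence, but you must then run the near/far decomposition around $t$ exactly as the paper does for the first term of \eqref{eq:var_exact_k}, and the compatibility Assumption \ref{assump:compat} becomes essential on the far region.

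Second, you explicitly leave open the control of $\mathrm{Var}(\text{predictable part})$ and the covariance with the martingale term, and this is not a minor detail: the predictable part equals $\int_0^\infty \kappa_{t,b_m}(s)k(s)\,ds - \int_{\max_i\tau_i}^\infty\kappa_{t,b_m}(s)k(s)\,ds$, whose random fluctuation involves the upper tail of $\kappa_{t,b_m}$ beyond the sample maximum; since the kernel has unbounded support, showing this is $o(1/(mb_m^\gamma))$ requires exactly the same kind of tail-decay control as Lemma \ref{lemma:var_2term1}. In the paper this is bundled into the double-integral term of \eqref{eq:var_exact_k}, and the cited Lemmas 11--12 of \cite{Hazard2} do that work. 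Without supplying an analogous estimate your proof is incomplete; with it, the two approaches become comparable in technical weight, with the martingale formulation arguably cleaner conceptually but offering no shortcut here.
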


\blue{By combining the results of Propositions \ref{thm:cvgce_exp_k} and \ref{lemma:var_eq_k}, we directly obtain the following expression of the MSE. }

\blue{\begin{thm}[MSE convergence]
\label{thm:MSE_k}
Suppose $k\in \Sigma(\beta, L)$ for some $\beta> 0$ and $k$ is bounded. 
Let $\hat{k}_m$ be the hazard rate estimator defined by \eqref{eq:k_estim}, with an associated  kernel $(\kappa_{t,b})$ of order $(\beta,\gamma)$. 
Under Assumptions \ref{ass:order} to \ref{assump:compat},  and \ref{assump:gamma_2_inf} and  if  $b_m^{\gamma} m \to + \infty $ we have for all $t\in \St$,
\begin{align}
    MSE&(\hat{k}_m) = \E\left[(\hat{k}_m(t) - k(t))^2  \right] \nonumber \\=& \ \Big(\sum_{s= 1}^{l } \frac{k^{(s)}(t)}{s!}|\E[(\Zt - t)^{s}]|\Big)^2    +  \frac{\alpha_{b_m}(t)}{m}\frac{k(t)}{1-F(t)}   + o(b_m^{2 \gamma \beta}) + o\left(m^{-1}b_m^{-\gamma}\right).\label{eq:mse_k}
\end{align}
The optimal asymptotic convergence rate of $O(m^{-2\beta/(2\beta +1)})$ is achieved for $b_m^{\gamma}= Cm^{-1/(2\beta +1)}$.
\end{thm}
}
Finally, using the equivalents proved in the previous propositions, we obtain the asymptotic expansion of the MISE.

\begin{thm}[MISE convergence]
\label{thm:MISE_k}
Let $I \subset \St$ be a compact set and $\hat{k}_m$ the hazard rate estimator defined by \eqref{eq:k_estim}, \blue{with an associated  kernel $(\kappa_{t,b})$ of order $(\beta,\gamma)$}. \blue{Suppose $k\in \Sigma(\beta, L)$ for some $\beta>0$ and $k$ is bounded.}
Under \blue{Assumptions \ref{assump:gamma_sup} to  \ref{assump:gamma_2_inf} and if \ref{ass:order}} is verified  for all $t$ in the interior of $I$ and  $b_m^{\gamma} m \to + \infty $ we have,
\blue{
\begin{align}
    MISE(&\hat{k}_m) = \E\left[\int_I(\hat{k}_m(t) - k(t))^2 \dd t \right] \nonumber \\=& \int_I \Big(\sum_{s= 1}^{l } \frac{k^{(s)}(t)}{s!}|\E[(\Zt - t)^{s}]|\Big)^2 dt   \!+\! \int_I \frac{\alpha_{b_m}(t)}{m}\frac{k(t)}{1-F(t)} \dd t \! +\! o(b_m^{2 \gamma \beta})\! +\! o\left(m^{-1}b_m^{-\gamma}\right)\label{eq:mise_k}
\end{align}
The optimal asymptotic convergence rate of $O(m^{-2\beta/(2\beta +1)})$ is achieved for $b_m^{\gamma}= Cm^{-1/(2\beta +1)}$. }

\end{thm}
\blue{The asymptotic convergence rate for the MISE in Theorem \ref{thm:MISE_k} is known to be optimal in the minimax sense for estimating hazard rates in a Hölder class, as shown in \cite{Pons01011986,HUBER2003209} (see also \cite{efromovich2016minimax} for Sobolev functional classes). Attaining this rate, however, requires prior knowledge of the regularity $\beta$ of the hazard rate $k$. In Section \ref{sec:minimax}, we introduce a data-driven adaptive bandwidth selection procedure that reaches this optimal rate without prior knowledge of $\beta$.}
The following corollary states the result for the Gamma kernel as defined by Definition \ref{def:gamma}.
\begin{corollary}
\label{coro:MISE}
\blue{Suppose $k$ is in $\Sigma(2,L)$. }
For the Gamma kernel and a sequence $(b_m)_{m\geq 0}$ such that $b_m \xrightarrow[m \to + \infty]{}0$ and $mb_m^{1/2} \xrightarrow[m \to + \infty]{} + \infty$, we have
    \begin{align}
    MISE(\hat{k}_m) &= \E\left[\int_I(\hat{k}_m(t) - k(t))^2 \dd t \right] \nonumber \\&= \frac{1}{2 m \sqrt{\pi b_m}} \int_I t^{-1/2}\frac{k(t)}{1-F(t)} \dd t  + \int_I \frac{1}{4}t^2k''(t)^2 b_m^2 \dd t+ o(b_m^2) + o(m^{-1}b_m^{-1/2}).
\end{align}
In particular, the optimal convergence rate of $O(m^{-4/5})$ is achieved for $b_m = C m^{-2/5}$. 
\end{corollary}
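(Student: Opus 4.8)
The plan is to derive the statement as a direct specialization of the refined (higher-order) form of Theorem~\ref{thm:MISE_k}, namely equation~\eqref{eq:mise_k_2}, so that the only real work is to verify its hypotheses for the Gamma kernel and then substitute the explicit quantities provided by Proposition~\ref{prop:gamma}. I would first record that Proposition~\ref{prop:gamma} already grants $\St=\R_+$, the value $\gamma=1/2$, Assumptions~\ref{assump:gamma_1} through~\ref{assump:gamma_2_inf}, and the closed forms $\Lambda(t,b)=\bigl(t^2/(4b)+b\bigr)\one_{\{t\le 2b\}}$ and, on $\{t>2b\}$, the $\mathrm{Gamma}(t/b,b)$ variance $Var(Z_{t,b})=tb$; the standing smoothness of $k$ on the compact $I\subset\R_+$ and the condition $b_m^{1/2}m\to\infty$ are those of Theorem~\ref{thm:MISE_k}. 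The one extra hypothesis required by~\eqref{eq:mise_k_2}, that $\Lambda(t,b_m)=O(b_m^{2\gamma})=O(b_m)$ uniformly on $I$, follows from the elementary bound $t^2/(4b_m)\le b_m$ on $\{t\le 2b_m\}$, giving $0\le\Lambda(t,b_m)\le 2b_m$ for all $t\in\R_+$. Hence~\eqref{eq:mise_k_2} applies with $\gamma=1/2$.

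Next I would evaluate the three integrals in~\eqref{eq:mise_k_2}. The key observation is that $\Lambda(t,b_m)$ is supported in $\{t\le 2b_m\}$, a set of Lebesgue measure at most $2b_m\to0$, and that on this set $\Lambda(t,b_m)=O(b_m)$ while $Var(Z_{t,b_m})=t^2/4+b_m^2=O(b_m^2)$. Using the boundedness of $k'$ and $k''$ on $I$, this makes $\int_I k'(t)^2\Lambda(t,b_m)^2\,\dd t=O(b_m^3)$ and $\int_I k'(t)k''(t)\Lambda(t,b_m)Var(t,b_m)\,\dd t=O(b_m^4)$, both $o(b_m^2)$, and the part of $\int_I\tfrac14 k''(t)^2Var(t,b_m)^2\,\dd t$ over $\{t\le2b_m\}$ equal to $O(b_m^5)=o(b_m^2)$; on $\{t>2b_m\}$ one has $Var(Z_{t,b_m})=tb_m$, so dominated convergence (dominating integrand $t^2k''(t)^2$, integrable on $I$) yields $\int_I\tfrac14 k''(t)^2Var(t,b_m)^2\,\dd t=\tfrac14 b_m^2\int_I t^2k''(t)^2\,\dd t+o(b_m^2)$. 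For the variance integral I would invoke the classical asymptotics of the squared $L^2$-norm of the Gamma density, $\alpha_{b_m}(t)=\int_0^{\infty}\kappa_{t,b_m}(y)^2\,\dd y\sim\tfrac{1}{2\sqrt{\pi b_m t}}$ as $t/b_m\to\infty$ (see \cite{Gamma_kernel}), once more splitting $I$ at $2b_m$: over $\{t\le2b_m\}$ the bound $\alpha_{b_m}(t)\le C_s(t)^2 b_m^{-1}$ from Remark~\ref{rem:alpha_beta_bound} on a set of measure $O(b_m)$ contributes $O(m^{-1})=o(m^{-1}b_m^{-1/2})$, whereas over $\{t>2b_m\}$ one gets $\tfrac1m\int\alpha_{b_m}(t)\tfrac{k(t)}{1-F(t)}\,\dd t=\tfrac{1}{2m\sqrt{\pi b_m}}\int_I t^{-1/2}\tfrac{k(t)}{1-F(t)}\,\dd t+o(m^{-1}b_m^{-1/2})$, the replacement of the truncated integral by the one over all of $I$ being legitimate since $t\mapsto t^{-1/2}$ is integrable near $0$ and the discarded piece is $O(\sqrt{b_m})$. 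Adding the surviving terms and the remainders $o(b_m^{4\gamma})+o(m^{-1}b_m^{-\gamma})$ of~\eqref{eq:mise_k_2} gives the announced expansion.

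Finally I would pin down the optimal bandwidth by balancing the two leading terms: $\tfrac14 b_m^2\int_I t^2k''(t)^2\,\dd t\asymp\tfrac{1}{2m\sqrt{\pi b_m}}\int_I t^{-1/2}\tfrac{k(t)}{1-F(t)}\,\dd t$ forces $b_m^{5/2}\asymp m^{-1}$, i.e. $b_m\asymp m^{-2/5}$, for which both terms are of order $m^{-4/5}$ and the standing requirement $b_m^{1/2}m=m^{3/5}\to\infty$ holds. I expect the only genuinely delicate point to be the control of the near-boundary region $\{t\le2b_m\}$ together with making the uniformity of the $\alpha_{b_m}$-asymptotics on $I$ precise enough to take the limit under the integral sign; the rest is routine bookkeeping with the explicit formulas of Proposition~\ref{prop:gamma}.
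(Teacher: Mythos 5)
Your proposal is correct and follows essentially the same route as the paper's own proof: both derive the corollary by specializing the second-order MISE expansion \eqref{eq:mise_k_2} to the Gamma kernel via Proposition~\ref{prop:gamma}, checking that $\Lambda(t,b_m)=O(b_m)$ (since it is supported on $\{t\le 2b_m\}$ where it equals $t^2/(4b_m)+b_m\le 2b_m$), substituting $\mathrm{Var}(Z_{t,b_m})=tb_m$ and $\alpha_{b_m}(t)\sim(2\sqrt{\pi t b_m})^{-1}$, and showing the $\Lambda$-dependent integrals are $o(b_m^2)$ because the integration domain has measure $O(b_m)$. You are slightly more scrupulous than the paper about the near-boundary region $\{t\le 2b_m\}$ and about taking limits under the $\alpha_{b_m}$-integral; the only slip is the arithmetic $b_m^{1/2}m=m^{4/5}$ (not $m^{3/5}$), which does not affect the conclusion.
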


\subsection{Asymptotic normality}
\label{subsec:normal}
We now move on to another result on the asymptotic distribution of the estimator, namely, its asymptotic normality.
Let us fix $t \in \St$. 

\begin{thm}[Asymptotic normality]\label{thm:as_nor}
Let $\hat{k}_m(t)$ be defined by \eqref{eq:k_estim} with a kernel verifying Definition \ref{def:cont_ass}. If $k$ is continuous and bounded and under Assumptions \ref{assump:gamma_sup},\ref{assump:unif_int} and \ref{assump:gamma_2_inf} if $b_m \to 0$ and $b_m^{\gamma}m \rightarrow +\infty$, we have 
\begin{equation}
    \frac{\hat{k}_m(t) - \E[\hat{k}_m(t)]}{\sqrt{Var(\hat{k}_m(t))}} \xrightarrow[m \xrightarrow{}+ \infty]{\mathcal{L}} \mathcal{N}(0,1).
\end{equation}
Where we recall the following expressions
\begin{align}
    &\E[\hat{k}_m(t)] = \int_{\T}(1-F(y)^m) \kappa_{t,b_m}(y) k(y) \dd y\\
    &\text{Var}(\hat{k}_m(t)) \underset{m\xrightarrow{}+\infty}{\sim }\frac{1}{m}\frac{k(t)}{1-F(t)}\int_{\T}\kappa_{t,b_m}(y)^2 \dd y 
\end{align}
\end{thm}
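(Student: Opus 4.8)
The plan is to write the estimator as a sum/integral against the counting process $N$ (equivalently, the compensated martingale), isolate the dominant term, and apply a martingale central limit theorem. Recall from \eqref{eq:k_estim} that
\[
\hat k_m(t) = \int_0^{+\infty} \frac{\kappa_{t,b_m}(s)}{m-N_{s^-}}\,\one_{\{N_{s^-}<m\}}\, dN_s .
\]
Since $N$ has $(\mathcal F_t)$-compensator $k(s)(m-N_{s^-})\one_{\{N_{s^-}<m\}}\,ds$, the process $M_s := N_s - \int_0^s k(u)(m-N_{u^-})\one_{\{N_{u^-}<m\}}\,du$ is a martingale, and writing $dN_s = dM_s + k(s)(m-N_{s^-})\one_{\{N_{s^-}<m\}}\,ds$ gives the decomposition
\[
\hat k_m(t) - \E[\hat k_m(t)]
= \underbrace{\int_0^{+\infty}\frac{\kappa_{t,b_m}(s)}{m-N_{s^-}}\,\one_{\{N_{s^-}<m\}}\, dM_s}_{=:X_m}
\;+\;\Big(\int_0^{+\infty}\kappa_{t,b_m}(s)\one_{\{N_{s^-}<m\}}k(s)\,ds - \E[\hat k_m(t)]\Big).
\]
The first step is to show the second (predictable-drift-minus-mean) term is negligible relative to $\sqrt{\mathrm{Var}(\hat k_m(t))}\asymp (mb_m^\gamma)^{-1/2}$; using $\E[\one_{\{N_{s^-}<m\}}] = 1-F(s)^m$ together with the exact formula $\E[\hat k_m(t)] = \int(1-F(y)^m)\kappa_{t,b_m}(y)k(y)\,dy$ from Proposition \ref{prop:inequ}, one sees the difference has mean zero; its variance should be $O(b_m^{-2\gamma})$ times a $P(N_{s^-}=m)$-type factor that is exponentially small, hence $o\big((mb_m^\gamma)^{-1}\big)$. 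So it suffices to prove a CLT for $X_m/\sqrt{\mathrm{Var}(\hat k_m(t))}$.

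The second step is to apply the martingale CLT (Rebolledo / Lindeberg-type, as in \cite{andersen_statistical_1993}) to the square-integrable martingale sequence $s\mapsto \int_0^s \frac{\kappa_{t,b_m}(u)}{m-N_{u^-}}\one_{\{N_{u^-}<m\}}\,dM_u$. Its predictable quadratic variation is
\[
\langle X_m\rangle_{\infty}
= \int_0^{+\infty} \frac{\kappa_{t,b_m}(s)^2}{(m-N_{s^-})^2}\,\one_{\{N_{s^-}<m\}}\, k(s)(m-N_{s^-})\,ds
= \int_0^{+\infty} \frac{\kappa_{t,b_m}(s)^2}{m-N_{s^-}}\,\one_{\{N_{s^-}<m\}}\, k(s)\,ds .
\]
I would show $m b_m^{\gamma}\langle X_m\rangle_\infty \to \dfrac{k(t)}{1-F(t)}\,\underline C_3$-type limit; more precisely, that $\langle X_m\rangle_\infty / \mathrm{Var}(\hat k_m(t)) \to 1$ in probability. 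This uses: (a) $(m-N_{s^-})/m \to 1-F(s)$ uniformly on the relevant range by Glivenko--Cantelli / LLN for $N$; (b) concentration of $\kappa_{t,b_m}^2$-mass near $s=t$ via Lemma \ref{lemma:conv_pointwise} and continuity of $k/(1-F)$ at $t$, so that $\int \kappa_{t,b_m}(s)^2 \tfrac{k(s)}{1-F(s)}\,ds \sim \tfrac{k(t)}{1-F(t)}\alpha_{b_m}(t)$; (c) the lower bound $\alpha_{b_m}(t)\ge \underline C_3(t)b_m^{-\gamma}$ from Assumption \ref{assump:gamma_2_inf} (and the matching upper bound from Remark \ref{rem:alpha_beta_bound}) to control ratios. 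This is exactly the content (up to the variance equivalent) already proved in Proposition \ref{lemma:var_eq_k}, so much can be quoted.

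The third step is the conditional Lindeberg condition: for every $\varepsilon>0$,
\[
\int_0^{+\infty} \frac{\kappa_{t,b_m}(s)^2}{(m-N_{s^-})^2}\,\one_{\{N_{s^-}<m\}}\,\one_{\big\{\kappa_{t,b_m}(s)/(m-N_{s^-})>\varepsilon\sqrt{\mathrm{Var}(\hat k_m(t))}\big\}}\,k(s)(m-N_{s^-})\,ds \xrightarrow[m\to\infty]{\mathbb P} 0 .
\]
Here the single-jump sizes are bounded by $\sup_y\kappa_{t,b_m}(y)/(m-N_{s^-}) \le C_s(t)b_m^{-\gamma}/(m-N_{s^-})$ by Assumption \ref{assump:gamma_sup}, which on the event $\{N_{s^-}\le m/2\}$ (of probability $\to 1$) is $O\big(b_m^{-\gamma}/m\big) = o\big(\sqrt{\mathrm{Var}(\hat k_m(t))}\big)=o\big((mb_m^\gamma)^{-1/2}\big)$ precisely because $mb_m^\gamma\to\infty$; hence the indicator is eventually $0$ on that event and the Lindeberg integral vanishes. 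Assumption \ref{assump:unif_int} is used to justify integrability/Fubini when taking expectations along the way. Combining the three steps, Rebolledo's theorem gives $X_m/\sqrt{\langle X_m\rangle_\infty}\Rightarrow \mathcal N(0,1)$, and since $\langle X_m\rangle_\infty/\mathrm{Var}(\hat k_m(t))\to 1$ in probability and the drift remainder is negligible, Slutsky yields the claim.

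The main obstacle I expect is step two — making the ratio $\langle X_m\rangle_\infty/\mathrm{Var}(\hat k_m(t))\to 1$ rigorous — because it requires handling the random denominator $m-N_{s^-}$ uniformly in $s$ over a range where $\kappa_{t,b_m}$ puts non-negligible mass (which grows as $b_m\to 0$), while simultaneously controlling the tail contribution where $\kappa_{t,b_m}/(1-F)$ need not be small except through the compatibility Assumption \ref{assump:compat}. The cleanest route is to piggyback on Proposition \ref{lemma:var_eq_k}, whose proof already carries out this analysis for $\E[\mathrm{Var}]$; one then upgrades to convergence in probability of $\langle X_m\rangle_\infty$ itself via a second-moment (variance of $\langle X_m\rangle_\infty$) bound, again using Assumptions \ref{assump:gamma_sup} and \ref{assump:unif_int}.
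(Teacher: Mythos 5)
Your route via the compensated-martingale decomposition and Rebolledo's CLT is genuinely different from the paper's, which instead follows \cite{Tanner_Wong_HR_TCL} and uses H\'ajek's projection. The paper writes $\hat k_m(t)=\sum_{i=1}^m W_i$ with $W_i=\kappa_{t,b_m}(\tau_i)/(m-N_{\tau_i^-})$, replaces this sum by the projection $\hat W=\sum_i \E[W\mid\tau_i]-(m-1)\E[W]$, which turns out to be a sum of i.i.d.\ terms $m^{-1}V_m(\tau_i)+m^{-1}U_m(\tau_i)+\Delta_m$, applies Lyapunov's CLT to $\hat W$ (the third-moment estimates being Lemma \ref{lemma:eq_var_norm} plus the bounds $|U_m|=O(\log m)$, $\Delta_m=O(m^{-2})$), and then quotes the projection lemma of \cite{Tanner_Wong_HR_TCL} to transfer the limit from $\hat W$ to $\hat k_m(t)$. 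Your route is the standard Aalen/counting-process one (as in \cite{Ramlau_haz,andersen_statistical_1993}): it exploits the multiplicative-intensity structure rather than the i.i.d.\ structure of the $\tau_i$, and would extend more directly to censoring or non-i.i.d.\ event histories, whereas the projection route is somewhat more self-contained in the i.i.d.\ setting and delegates the random-denominator issue entirely to the moment computations of Lemma \ref{lemma:eq_var_norm}.

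Two spots in your sketch would need to be firmed up before it constitutes a full proof, and both would end up using the compatibility Assumption \ref{assump:compat} (which, like Lemma \ref{lemma:eq_var_norm} in the paper's own proof, you implicitly rely on even though the theorem statement does not list it). First, the claim that $\mathrm{Var}(D_m)$, where $D_m$ is your drift-minus-mean term, is driven by an ``exponentially small'' $F(s)^m$ factor is not quite right uniformly: $F(s)^m$ is small precisely where $\kappa_{t,b_m}(s)$ is not, and vice versa; you need the split at $|s-t|\le\lambda$ and the compatibility bound on the tail, exactly as in Lemma \ref{lemma:var_2term1}. Second, in the Lindeberg step, restricting to the event $\{N_{s^-}\le m/2\}$ only controls the jump size $\kappa_{t,b_m}(s)/(m-N_{s^-})$ on a compact range of $s$; for $s$ in the tail, where $m-N_{s^-}$ can be of order $1$, one again needs $\kappa_{t,b_m}(s)/(1-F(s))$ bounded (Assumption \ref{assump:compat}) combined with the fact that $m(1-\hat F_m(s))$ concentrates around $m(1-F(s))$, rather than the crude $C_s(t)b_m^{-\gamma}/(m-N_{s^-})$ bound you invoke. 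Neither issue is fatal -- the analysis mirrors what the paper already does to control the second term of \eqref{eq:var_exact_k} -- but as written those steps are gaps.
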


This result is shown in \cite{Tanner_Wong_HR_TCL} for classical symmetric kernels. We will generalize it to associated kernels. Note that the asymptotic equivalent of the variance gives the expected $\sqrt{m}$ order of convergence. Proving asymptotic normality includes controlling the expectations and variances of the quantities involved, which adds some work for associated kernels, as they do not depend explicitly on the bandwidth. However, the assumptions made on the kernels ensure that associated kernels have asymptotically the same behavior as classical symmetric kernels.  
The proof presented here follows closely the one presented in \cite{Tanner_Wong_HR_TCL}, to which we refer the reader for the details of the computations that are not specific to associated kernels.
The proof relies on Hajek's projection method (\cite{Hajek_68}) and can be found in the Appendix in Section \ref{sec:proof_norm}.

Finally, we state the following Corollary for the application to the Gamma kernel. 
\begin{corollary}
\blue{Suppose $k \in \Sigma(2,L)$.} Let $\hat{k}_m$ be the Gamma kernel hazard rate estimator, we have
\begin{equation}
    \frac{\hat{k}_m(t) - \E(\hat{k}_m(t))}{\sqrt{Var(\hat{k}_m(t))}} \xrightarrow[m \xrightarrow{}+ \infty]{} \mathcal{N}(0,1). \nonumber
\end{equation}
For $t>0$,
\begin{align}
     &\E[\hat{k}_m(t)] = k(t) + 
        \frac{1}{2}tk''(t)b_m + o(b_m) \label{eq:eq_exp}\\ 
    &\text{Var}(\hat{k}_m(t)) \underset{m\xrightarrow{}+\infty}{\sim}\frac{1}{m}\frac{k(t)}{1-F(t)} \times 
        \frac{1}{2\sqrt{\pi t b_m}}.\label{eq:eq_var}
\end{align}
And for $t = 0$,
\begin{align*}
&\E[\hat{k}_m(t)] = k(0) + b_mk'(0)  + o(b_m) \qquad  \text{and} \qquad \text{Var}(\hat{k}_m(0)) \underset{m\xrightarrow{}+\infty}{\sim }
    \frac{1}{m}\frac{k(0)}{2b_m}. 
\end{align*}
\end{corollary}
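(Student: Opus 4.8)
The plan is to obtain every assertion of the Corollary as a direct specialization of the general results of Sections~\ref{sec:conv_results} and~\ref{subsec:normal}. The key input is Proposition~\ref{prop:gamma}: it certifies that the Gamma kernel satisfies Definition~\ref{def:cont_ass} and Assumptions~\ref{assump:gamma_1}--\ref{assump:gamma_2_inf} with $\gamma=1/2$ and $\St=\R_+$, and it provides the explicit values of $\Lambda(t,b_m)$ and $\mathrm{Var}(Z_{t,b_m})$. Under the standing hypotheses $b_m\to 0$ and $b_m^{1/2}m\to+\infty$ (note that the latter also gives $b_m^{\gamma}m^{\beta}\to+\infty$ with $\beta=1$, as needed for the bias expansion), together with the smoothness of $k$, all the hypotheses of Theorem~\ref{thm:as_nor}, Proposition~\ref{thm:cvgce_exp_k} and Proposition~\ref{lemma:var_eq_k} are in force, so the proof is essentially bookkeeping.

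First I would invoke Theorem~\ref{thm:as_nor} verbatim, which already yields the stated central limit theorem. For the bias, Proposition~\ref{thm:cvgce_exp_k} gives $\E[\hat k_m(t)]-k(t)=k'(t)\Lambda(t,b_m)+\tfrac12 k''(t)\bigl(\Lambda(t,b_m)^2+\mathrm{Var}(Z_{t,b_m})\bigr)+o(b_m)$. For fixed $t>0$ one has $t\ge 2b_m$ once $m$ is large, so by Proposition~\ref{prop:gamma} $\Lambda(t,b_m)=0$ and $\mathrm{Var}(Z_{t,b_m})=tb_m$, which produces \eqref{eq:eq_exp}; at $t=0$ one has $\Lambda(0,b_m)=b_m$ and $\mathrm{Var}(Z_{0,b_m})=b_m^2$, so the two quadratic terms are $O(b_m^2)=o(b_m)$ and only $b_m k'(0)$ survives.

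The single computational step concerns the variance. Proposition~\ref{lemma:var_eq_k} gives $\mathrm{Var}(\hat k_m(t))=\frac{\alpha_{b_m}(t)}{m}\frac{k(t)}{1-F(t)}+o\bigl(m^{-1}b_m^{-1/2}\bigr)$, so everything reduces to the asymptotics of $\alpha_b(t)=\int_{\R_+}\kappa_{t,b}(y)^2\dd y$. For $t\ge 2b$ the kernel is the density of $\mathrm{Gamma}(t/b,b)$, and the elementary identity $\int_0^{\infty}y^{2\rho-2}e^{-2y/b}\dd y=(b/2)^{2\rho-1}\Gamma(2\rho-1)$ with $\rho=t/b$ gives $\alpha_b(t)=\Gamma(2t/b-1)\,2^{-(2t/b-1)}\,b^{-1}\,\Gamma(t/b)^{-2}$; Stirling's formula applied to $\Gamma(2t/b-1)$ and to $\Gamma(t/b)^2$ then yields $\alpha_b(t)\sim(2\sqrt{\pi t b})^{-1}$. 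Since this equivalent is of exact order $b^{-1/2}$, it dominates the remainder term in Proposition~\ref{lemma:var_eq_k}, giving \eqref{eq:eq_var}. At $t=0$ the kernel is $\kappa_{0,b}(y)=b^{-1}e^{-y/b}\one_{\{y\ge 0\}}$, whence $\alpha_b(0)=\tfrac{1}{2b}$ exactly and $\mathrm{Var}(\hat k_m(0))\sim\frac{1}{m}\frac{k(0)}{2b_m}$. I expect the Stirling estimate for $\alpha_b(t)$ on $\{t\ge 2b\}$ to be the only nonroutine point; all the remaining steps are substitutions of the explicit Gamma-kernel quantities into the general theorems.
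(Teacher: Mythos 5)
Your proof is correct and follows essentially the same route as the paper: it cites Proposition~\ref{prop:gamma} for the Gamma-kernel values of $\Lambda$, $\mathrm{Var}(Z_{t,b})$, $\alpha_b(t)$ (and the appendix Stirling computation in equation~\eqref{eq:int_gamma_eq} is exactly the asymptotics you derived, so that step is not an addition), and then specializes Theorem~\ref{thm:as_nor} together with the bias and variance expansions of Propositions~\ref{thm:cvgce_exp_k} and~\ref{lemma:var_eq_k}. You are slightly more explicit than the paper's one-line proof in naming Propositions~\ref{thm:cvgce_exp_k} and~\ref{lemma:var_eq_k}, but the substance is identical.
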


\begin{proof}
    The proof follows from Proposition \ref{prop:gamma} and Theorem \ref{thm:as_nor}. The computations of the equivalents of the quantities involved are detailed in the proof of Proposition \ref{prop:gamma} (see Section \ref{sec:proof_2.1}). 
\end{proof}

\subsection{Proofs of Section \ref{sec:results}}
\label{sec:proofs_MISE}
In this section, we prove the results given in Section \ref{sec:results}, namely Propositions \ref{prop:inequ}, \ref{thm:cvgce_exp_k} and \ref{lemma:var_eq_k} and Theorem \ref{thm:MISE_k}. 

In the case of classical symmetric kernels (\cite{Tanner_Wong_HR_TCL,hazard_1, Hazard2}), asymptotic expansions are obtained by using the explicit formulation of the kernel \eqref{eq:sym_ker_est} and performing changes of variables in order to carry out a Taylor expansion and factorize the bandwidth $b$. This makes it possible to manipulate quantities that do not depend on $b$, thereby obtaining the rate of convergence directly.

In our framework, the kernel is given only through a general formulation, without any explicit dependence on the variables $t$ and $b$. This lack of structure makes the analysis more difficult: this requires the introduction of several new assumptions (stated in Section \ref{sec:the_frame}), and the proof relies on several Taylor expansions, as well as a careful study of the remainder terms, in order to establish the rate of convergence. In particular, no assumption is made on the compactness of the support of the kernel, unlike what is done in a large part of the literature (see e.g. \cite{andersen_statistical_1993, recurrent_event_Bouaziz}). The compatibility  assumption (Assumption \ref{assump:compat}) between survival function and kernel also allows us to 
control the decay of the remainder terms in the integrals. 

Finally, studying the kernel estimator of the hazard rate introduces additional difficulties compared to the density case. Estimating the hazard rate requires treating the data as an ordered sequence, thereby introducing dependence. This in turn complicates the computation of the expectation and variance of the estimator (mainly resolved in \cite{Hazard2} in the classical case). In particular, unlike in the density setting, the convergence of the bias of the estimator depends not only on the bandwidth $b$ but also on the sample size $m$.

\subsubsection{Proof of Proposition \ref{prop:inequ}}

The expression of $\E[\hat{k}_m(t)]$, with $\hat{k}_m$ defined in \eqref{eq:k_estim}, can be directly computed: \blue{recalling $N$ is a counting process of intensity  $(k(t)(m -N_{t^-})) \one_{\{N_{t-} < m\}}$,
\begin{align*}
   \E[\hat{k}_m(t)] & =  \E[ \int_0^{+\infty} \frac{\kappa_{t,b_m}(s)}{m-N_{s^-}} \one_{\{N_{s-} < m \}}dN_s]  = \E[ \int_0^{+\infty}  k(s) \one_{\{N_{s-} < m \}} \kappa_{t,b_m}(s) ds]\\
    & = \int_0^{+\infty}  k(s) \mathbb P ( \max_{i=1,\dots ,  m} \tau_i > s)  \kappa_{t,b_m}(s) ds.
\end{align*}}
Let $Z_{t,b_m}$ be a random variable of pdf $\kappa_{t,b_m}$. Since the r.v  $(\tau_i)_{1\leq i \leq m}$ are i.i.d, we obtain  that
    \begin{align}
    \E[\hat{k}_m(t)] -k(t)   & = \E[k(Z_{t,b_m})(1- F(Z_{t,b_m})^m)] -k(t)
 := A_m - B_m,
    \label{eq:esp_km}
\end{align}
with 
\begin{align*}
A_m = \E[k(Z_{t,b_m})] - k(t), \quad B_m = \E[k(Z_{t,b_m})F(Z_{t,b_m})^m], 
\end{align*}
 and $F$ the cdf of the event times $(\tau_i)_{1\leq i\leq m}$.
\paragraph{Part 1} We begin by showing that $\hat{k}_m(t)$ is asymptotically unbiased. \\\blue{In the case where $k$ is bounded, since by definition, there is convergence in $L^2$ of $(Z_{t,b_m})_{m \in \N}$ towards $t$ (and thus, convergence in law) and $k$ is continuous and bounded}, $A_m$ goes to $0$ as $m$ goes to $+\infty$. 
Let us now prove that $B_m$ vanishes. Let $n \in \N^*$, we have 
\begin{align}
  B_m & \leq   \E[k(Z_{t,b_m})F(Z_{t,b_m})^m\1_{|Z_{t,b_m}-t|\geq n}] + \E[k(Z_{t,b_m})F(Z_{t,b_m})^m\1_{|Z_{t,b_m}-t|\leq n}]  \label{eq:B_equation}\\
  &\leq ||k||_{\infty} \mathbb{P}(|Z_{t,b_m} - t| \geq n) + F(t+n)^m)\nonumber \xrightarrow[m \rightarrow + \infty]{}0 \nonumber.
\end{align}

In the case where the compatibility Assumption \ref{assump:compat} is verified, 
we have for $n \geq \lambda$ with $\lambda$ defined in Assumption \ref{assump:compat}, 
\begin{align*}
    \E[k(Z_{t,b_m})\one_{\{Z_{t,b_m} \geq n\}}] = \int_{\{y\geq n\}} \kappa_{t,b_m}(y)k(y) \dd y \leq G(t)\int_{\{y\geq n\}} f(y) \dd y \xrightarrow[n \to +\infty]{}0. 
\end{align*}
Thus,  the sequence  $(k(Z_{t,b_m}))_{m \in \N}$ is uniformly integrable. As $(k(Z_{t,b_m}))_{m \in \N}$ converges in probability to $k(t)$, it follows that $A_m = \E[k(Z_{t,b_m})] - k(t) \xrightarrow[m \to + \infty]{}0$. And for $B_m$, we have
\begin{align}
   B_m =  \E[k(Z_{t,b_m})F^m(Z_{t,b_m})] &\nonumber \leq F(t+\lambda)^m \int_{\St\cap\{|y-t|\leq \lambda\} } k(y) \kappa_{t,b_m}(y) \dd y \nonumber\\
   & \qquad +G(t) \int_{\St\cap\{|y-t|>\lambda \}} k(y)F(y)^m (1-F(y)) \dd y \nonumber\\
    & \leq  F(t+\lambda)^m \sup_{y\in [t-\lambda, t+\lambda]}(k(y)) +G(t) \frac{1}{m+1}[F(y)^{m+1}]_{y=0}^{y=+\infty}\nonumber\\
    &= F(t+\lambda)^m \sup_{y\in [t-\lambda, t+\lambda]}(k(y)) + \frac{G(t) }{m+1} \xrightarrow[m \to + \infty]{}0. \label{eq:B_compat}
\end{align}
\color{black}
This proves that $\hat{k}_m(t)$ is asymptotically unbiased. 
\blue{
\paragraph{Part 2}  We now prove \eqref{eq:bound_nas} with a Taylor expansion. As \eqref{eq:bound_nas} is a non-asymptotic inequality, we fix $b>0$. Let $Z_{t,b}$ be the random variable with pdf $\kappa_{t,b}$. First,
\begin{align*}
   \forall y \in \T, \, \exists \; \zeta_{b}(y) \in [y, t],  k(y) = \sum_{s=0}^{l-1} \frac{k^{(s)}(t)}{s!} (y-t)^s + \frac{k^{(l)}(\zeta_b(y))}{l!}(y-t)^{l}
\end{align*}
Since this is true for any $y \in \R_+$, in particular it is true for $Z_{t,b}$. Thus by taking the expectation 
\begin{align*}
     \E[k(Z_{t,b})]-k(t) = \sum_{s=1}^{l-1} \E[\frac{k^{(s)}(t)}{s!} (\Zt-t)^s] + \E[\frac{k^{(l)}(\zeta_b(Z_{t,b}))-k^{(l)}(t)}{l!}(\Zt-t)^{l}] + \E[\frac{k^{(l)}(t)}{l!}(Z_{t,b}-t)^{l}]. 
\end{align*}
And, using the fact that $k \in \Sigma(\beta, L)$, we have 
\begin{align}
     |\E[k(Z_{t,b})]-k(t)| &\leq \sum_{s=1}^{l-1} \frac{|k^{(s)}(t)|}{s!}\E[|Z_{t,b}-t|^s] + \E[|\frac{k^{(l)}(\zeta_b(Z_{t,b}))-k^{(l)}(t)|}{l!}|Z_{t,b}-t|^{l}] + \frac{|k^{(l)}(t)|}{l!}\E[|Z_{t,b}-t|^{l}] \nonumber\\
     &\leq \sum_{s=1}^{l} \frac{|k^{(s)}(t)|}{s!}\E[ (Z_{t,b}-t)^s] + \frac{L}{l!}\E[|Z_{t,b}-t|^{\beta}]\label{eq:control_A}.
\end{align}
}

\blue{Under the assumption that $k$ is bounded, the upper bound on $B_m$ is given by \eqref{eq:B_equation}. If $k$ is not bounded but Assumption \ref{assump:compat}, the upper bound follows from equation \eqref{eq:B_compat}.}

\subsubsection{Proof of Proposition \ref{thm:cvgce_exp_k}}

We now move on to the proof of Proposition \ref{thm:cvgce_exp_k}, which refines the result of Proposition \ref{prop:inequ} to prove the asymptotic equivalent of the bias.
Let $Z_{t,b_m}$ be a random variable of pdf $\kappa_{t,b_m}$, and recall the expression of $\E[\hat{k}_m(t)]- k(t) =A_m -B_m$ stated in \eqref{eq:esp_km}, with 
$$A_m = \E[k(Z_{t,b_m})] - k(t), \quad \text{ and } \quad B_m= \E[k(Z_{t,b_m})F(Z_{t,b_m})^m].$$

The proof of the equivalent of $A_m$ follows similar steps to those in \cite{Bouez_gamma_haz} and \cite{Gamma_kernel} for the density kernel estimator in the specific case of the Gamma kernel.
 We introduce $U_{t,b_m}:=\frac{\Zt - \E[\Zt]}{b_m^{\gamma}}$ and the set $\mathbb{K}_{t,m} = \{u \in \mathbb{R}, ub_m^{\gamma}+\E[\Zt] \in \St\}$ and its density $f_{t,b_m}$ such that
 $$ \forall u \in \mathbb{K}_{t,m} \,,  f_{t,b_m}(u) = b_m^{\gamma} \kappa_{t,b_m}(ub_m^{\gamma}+\E[\Zt]).$$ 
\blue{
Let $l = \lfloor \beta \rfloor$, by Taylor expansion around $Z_{t,b_m}$, we have
\begin{align*}
    \E[ k(\Zt)]   & =  k(t) +\sum_{s= 1}^{l} \frac{k^{(s)}(t)}{s!}\E[(\Zt - t)^{s}] + \int_{\St} \kappa_{t,b_m}(y) \frac{k^{(l)}(v_m(y))-k^{(l)}(t)}{l!} (y - t)^{l} \dd y, 
\end{align*}
where $v_m$ is such that $\forall y \in \T, v_m(y) \in [y, t]$. 
Let $\varepsilon >0$, since by assumption $k^{(l)}$ is continuous, we can introduce $\eta >0$ be such that 
$$\forall y \in \R_+, |y-t| \leq \eta \implies |k^{(l)}(y) - k^{(l)}(t)| \leq \varepsilon.$$
Furthermore, since by assumption  $|k^{(l)}(t) - k^{(l)}(y)|\leq L|y-t|^{\beta-l}$ and $v_m(y) \in [y,t]$, we have\\
\begin{align*}
    R_m := |\int_{\St} & \kappa_{t,b_m}(y) \frac{k^{(l)}(v_m(y))-k^{(l)}(t)}{l!} (y - t)^{l} \dd y| \\
    & \leq  \frac{\varepsilon}{l!} \int_{\St \cap \{|y-t| \leq \eta\}} |y - t|^{l} \kappa_{t,b_m}(y) \dd y+ \frac{L}{l!} \int_{\St \cap\{ |y-t| > \eta\}} \kappa_{t,b_m}(y) |y - t|^{\beta} \dd y\\
    & \leq \frac{\varepsilon}{l!} C_l(t) b^{\gamma \beta} + \frac{L}{l!} \int_{\St \cap \{|y-t| > \eta\}} \kappa_{t,b_m}(y) |y - t|^{\beta} \dd y
\end{align*}
By Assumption \ref{ass:order}
we have for any $\eta$ $b_m^{-\beta \gamma}\int_{\St \cap\{ |y -\E[\Zt]| > \eta\}} (y - \E[\Zt])^{\beta} \kappa_{t,b_m}(y) \dd y \xrightarrow[m \rightarrow + \infty]{} 0 $. And since $\E[(\Zt - t)^{l}] = O(b_m^{\beta \gamma})$ by Definition \ref{def:order_beta}, $R_m = o(b_m^{\beta \gamma})$. 
Since $(\kappa_{t,b_m})$ is a kernel of order $\beta$, we have for $1\leq n \leq l$,  $\E[(\Zt - t)^{n}] = O(b_m^{\beta \gamma})$ and thus
\begin{align}
   A_m =  \sum_{s= 1}^{l } \frac{k^{(s)}(t)}{s!}|\E[(\Zt - t)^{s}]| + o(b_m^{\gamma \beta}) = O(b_m^{\beta \gamma}).
   \label{eq:A_int}
\end{align}
}

Note that the $o$ term in \eqref{eq:A_int} is not necessarily uniform with respect to $t$ on any subset of $\R_+$. \blue{However, as $R_m$ is continuous as a function of $t$, Heine's theorem implies that it is uniformly continuous on a compact set.}

As shown in the proof of Proposition \ref{prop:inequ}, part 1, by boundedness of $k$, for any $n \in \mathbb{N}^*$,  
\blue{
\begin{align*}
  B_m  \leq  ||k||_{\infty}( \mathbb{P}(|Z_{t,b_m}-t|\geq n)  +  F(t+n)^m)
\end{align*}}
\blue{
By assumption, $\exists \mu >0$ such that $ \frac{1}{m^\mu} = o(b_m^{\gamma})$. Hence, since $F(t+n) <1$ (as $k\in \Sigma(\beta, L)$),  $F(t+n)^m = o(m^{-\beta \mu }) = o(b_m^{\beta \gamma})$.
For $\varepsilon>0$ let $n_0 \in \N$ be such that $\frac{1}{n_0^l} \leq \varepsilon$. 
By Lemma \ref{lemma:conv_pointwise}, 
\begin{align*}
    \mathbb{P}(|\Zt-t|\geq n_0) \leq \frac{C(t)}{n_0^l}b_m^{\beta \gamma} \leq \varepsilon b_m^{\beta \gamma}.
\end{align*}
Which yields for $m$ large enough $|B_m b_m^{-\beta\gamma}| \leq 2\varepsilon$ and thus $B_m = o(b_m^{\beta\gamma})$, and thus finally,
\begin{align*}
     \E[\hat{k}_m(t)] -k(t) =  A_m-B_m
     =   \sum_{s= 1}^{l } \frac{k^{(s)}(t)}{s!}|\E[(\Zt - t)^{s}]| + o(b_m^{\gamma \beta}).
\end{align*}
}

\subsubsection{Proof of Proposition \ref{lemma:var_eq_k} }

We now prove Proposition \ref{lemma:var_eq_k}, which gives an equivalent of the variance, as well as the consistency of the estimator as a corollary. 

 The exact expression of $Var(\hat{k}_m(t))$ is computed  for classical symmetric kernels in Theorem 1 in \cite{Tanner_Wong_HR_TCL}.
 As no assumption on the kernel is needed, the result can be directly extended to associated kernels: 
    \begin{align}
       &  Var(\hat{k}_m(t)) = \int_{\St}\kappa_{t,b_m}(y)^2 k(y) \left(\sum_{i = 0}^{m-1} \binom{m}{i} \frac{F(y)^i(1-F(y))^{m-i}}{m-i} \right)\dd y \nonumber\\
        & + 2\int_{\St} \int_{y\leq z} 
        \Big(F(z)^m-F(y)^mF(z)^m \!-\! \frac{1-F(y)}{F(z)\!-\!F(y)}(F(z)^m\!-\!F(y)^m)\Big)\kappa_{t,b_m}(y)\kappa_{t,b_m}(z) k(y) k(z) \dd y \dd z \label{eq:var_exact_k}
    \end{align}
    An asymptotic equivalent of the variance is proved in \cite{Hazard2} (Theorem 2) for classical kernels. In the following, this proof is adapted to our more general setting. First, by Lemma \ref{lemma:var_2term1} in the Appendix \ref{sec:lemma_append}, the second term in \eqref{eq:var_exact_k} is negligible compared to $\frac{\alpha_{b_m(t)}}{m}$,  where we recall 
    $$\alpha_{b_m}(t) = \int_{\St}\kappa_{t,b_m}^2(y) \dd y.$$ 
    Thus, it remains to prove that the first term of \eqref{eq:var_exact_k} is equivalent to $\frac{\alpha_{b_m}(t)}{m} \frac{k(t)}{1-F(t)}$.\\

By Lemma \ref{lemma_var_F}, for $y$ such that $|t-y|\leq \lambda$ with $\lambda$ defined in Assumption \ref{assump:compat},
\begin{equation}
\label{eq:conv_Im}
mI_m(y) := m \sum_{i = 0}^{m-1} \binom{m}{i}\frac{F(y)^i(1-F(y))^{m-i}}{m-i} \xrightarrow[m\rightarrow + \infty]{}(1-F(y))^{-1},
\end{equation}
uniformly in $y$.
Hence,
\begin{align}
    &\Bigg| \frac{m}{\alpha_{b_m}(t)} \int_{\St} \kappa_{t,b_m}^2(y) k(y)I_m(y) \dd y - \frac{k(t)}{1-F(t)} \Bigg | \nonumber\\
    & \leq \Bigg| \frac{m}{\alpha_{b_m}(t)} \int_{\{|t-y|\leq \lambda\} \cap \St} \kappa_{t,b_m}(y)^2 k(y)I_m(y) \dd y - \frac{1}{\alpha_{b_m}(t)}\frac{k(t)}{1-F(t)}\int_{\{|t-y|\leq \lambda\}\cap \St} \kappa_{t,b_m}(y)^2\dd y \Bigg|\nonumber\\
    & \qquad + \frac{m}{\alpha_{b_m}(t)}\int_{\{|t-y|>\lambda\} \cap \St} \kappa_{t,b_m}(y)^2 k(y) I_m(y) \dd y+ \frac{1}{\alpha_{b_m}(t)} \frac{k(t)}{1-F(t)}\int_{\{|t-y|>\lambda\} \cap \St} \kappa_{t,b_m}(y)^2 \dd y \label{eq:var_decomp}
\end{align}
Let us first study the first term in \eqref{eq:var_decomp}. We have
\begin{align}
    E_m &:= \Bigg| \frac{m}{\alpha_{b_m}(t)} \int_{\{|t-y|\leq \lambda \}\cap \St} \kappa_{t,b_m}(y)^2 k(y)I_m(y) \dd y - \frac{1}{\alpha_{b_m}(t)}\frac{k(t)}{1-F(t)}\int_{\{|t-y|\leq \lambda\}\cap \St} \kappa_{t,b_m}(y)^2\dd y \Bigg| \nonumber\\
    & \leq \Bigg| \int_{\{|t-y|\leq \lambda\}\cap \St} \frac{\kappa_{t,b_m}(y)^2k(y)}{\alpha_{b_m}(t)} \Bigg(mI_m(y)-\frac{1}{1-F(y)}\Bigg) \dd y \Bigg | \nonumber \\
    & \qquad +  \Bigg | \int_{\{|t-y|\leq \lambda \}\cap \St} \frac{\kappa_{t,b_m}(y)^2}{\alpha_{b_m}(t)}\left(\frac{k(t)}{1-F(t)}-\frac{k(y)}{1-F(y)}\right)\dd y \Bigg| \nonumber\\
    & \leq \sup_{|t-y|\leq \lambda}\{|mI_m(y) - (1-F(y))^{-1}|\} \sup_{y\in [t-\lambda t+\lambda]}(k(y)) +I_{1,m}. \label{eq:E_M_1}
\end{align}
\blue{
Where 
\begin{align*}
    I_{1,m} & = \Bigg | \int_{\{|t-y|\leq \lambda \}\cap \St} \frac{\kappa_{t,b_m}(y)^2}{\alpha_{b_m}(t)}\left(\frac{k(t)}{1-F(t)}-\frac{k(y)}{1-F(y)}\right)\dd y \Bigg| \\
    & = \Big| \E[\frac{\kappa_{t,b_m}(Z_{t,b_m})}{\alpha_{b_m}(t)}\left(\frac{k(t)}{1-F(t)}-\frac{k(Z_{t,b_m})}{1-F(Z_{t,b_m})}\right)\1_{|t-Z_{t,b_m}|\leq \lambda}] \Big| 
\end{align*}
}
By \eqref{eq:conv_Im}, $\sup_{|t-y|\leq \lambda}\{|mI_m(y) - (1-F(y))^{-1}|\} \rightarrow 0$. Hence, the first term in \eqref{eq:E_M_1} goes to 0. 
Let us now control the second term in \eqref{eq:E_M_1},  $I_{1,m}$. \blue{ Recall that $Z_{t,b_m} \xrightarrow{\mathbb P} t$. 
By Assumptions \ref{assump:gamma_sup} and \ref{assump:gamma_2_inf}, $\frac{\kappa_{t,b_m}(\cdot)}{\alpha_{b_m}(t)} \leq  \frac{C_s(t)}{ \underline{C}_3(t)}$ is bounded. Hence, by continuity of the density $f = \frac{k}{1-F}$,  we also have
$$\frac{\kappa_{t,b_m}(Z_{t,b_m})}{\alpha_{b_m}(t)}\left(\frac{k(t)}{1-F(t)}-\frac{k(Z_{t,b_m})}{1-F(Z_{t,b_m})}\right)\1_{|t-Z_{t,b_m}|\leq \lambda} \xrightarrow{\mathbb P} 0.$$ 
This sequence is also uniformly bounded in $m$ ($f$ is bounded on $|t-y|\leq \lambda$), and hence also converges in $L^1$, which yields that $I_{1,m} \xrightarrow[m \rightarrow +\infty]{} 0$ and thus $E_m \xrightarrow[m \rightarrow +\infty]{} 0$. } 

For the second term in \eqref{eq:var_decomp}, we have
\begin{align*}
    \frac{m}{\alpha_{b_m}(t)}\int_{\{|t-y|>\lambda\} \cap \St} \kappa_{t,b_m}(y)^2 k(y) I_m(y) \dd y &= \frac{1}{\alpha_{b_m}(t)}\int_{\{|t-y|>\lambda\} \cap \St} \left(\frac{\kappa_{t,b_m}(y)}{1-F(y)}\right)^2 (1-F(y)) f(y) mI_m(y) \dd y 
\end{align*}
By the compatibility Assumption \ref{assump:compat}, $\frac{\kappa_{t,b_m}(y)}{1-F(y)}$ is bounded on $|y-t|\geq \lambda$. In addition, \blue{for $m$ large enough, $mI_m(y) \leq \frac{2}{1-F(y)}$}, and by Assumption \ref{assump:gamma_2_inf}, $\frac{1}{\alpha_m(t)} = O(b_m^{\gamma})$, which proves that this term converges to $0$.\\

Finally, for the last term in \eqref{eq:var_decomp} we write
\begin{align*}
     \hspace{-0.5cm}\frac{1}{\alpha_{b_m}(t)} \frac{k(t)}{1-F(t)}\int_{\{|t-y|>\lambda\} \cap \St} \kappa_{t,b_m}(y)^2 \dd y& \leq \frac{\sup_{y \in \St}(\kappa_{t,b_m}(y))}{\alpha_{b_m}(t)} \frac{k(t)}{1-F(t)}\mathbb{P}(|Z_{t,b_m}-t|\geq \lambda )\xrightarrow[m \to \infty ]{}0.
\end{align*}
This finishes to prove that \eqref{eq:var_decomp} goes to 0 i.e. that the first term in \eqref{eq:var_exact_k} converges to $\frac{\alpha_{b_m}(t)}{m} \frac{k(t)}{1-F(t)}$.

Hence for a fixed $t \in \mathbb{S}$, 
        \begin{align}
    Var(\hat{k}_m(t)) = \frac{\alpha_{b_m}(t)}{m}\frac{k(t)}{1-F(t)} + h_m(t)\frac{\alpha_{b_m}(t)}{m}.\label{eq:var_comp_k}
\end{align}
with $h_m(t)\xrightarrow[m \rightarrow +\infty]{}0$. 
In particular, as $\E[\hat{k}_m(t)] \xrightarrow[m\rightarrow + \infty]{}k(t) $ by Proposition \ref{prop:inequ}, we have 
\begin{equation}
     \hat{k}_m(t) \xrightarrow[m \rightarrow + \infty]{\mathbb{P}} k(t). \nonumber
 \end{equation}

\subsubsection{Proof of Theorem \ref{thm:MISE_k}}

Finally, let us prove the main theorem, which states the convergence and provides an asymptotic equivalent of the MISE. With the classical bias-variance decomposition, we have
$$MISE(\hat{k}_m)  = \int_I Var(\hat{k}_m(t)) +\E[\hat{k}_m(t) - k(t)]^2 \dd t.$$ 
We start by proving the integrated equivalent of the variance. To prove \eqref{eq:var_form_k}, we integrate equation \eqref{eq:var_comp_k} over the compact set $I$.
\begin{align}
    \int_I Var(\hat{k}_m(t)) \dd t &=  \int_I \frac{\alpha_{b_m}(t)}{m}\frac{k(t)}{1-F(t)} \dd t + \int_I h_m(t)\frac{\alpha_{b_m}(t)}{m} \dd t\label{eq:var_int1_k} := J_{1,m} + J_{2,m}.
\end{align}
The function $t \rightarrow \alpha_{b_m}(t) =\int_{\St} \kappa_{t,b_m}(y)^2 \dd y  $ is continuous in $t$ on the compact set $I$ by dominated convergence using \ref{assump:unif_int} and \ref{assump:gamma_1} (ii). \blue{Since it is continuous and for any $t\in \St, \alpha_{b_m}(t) \leq C_s(t)b_m^{-\gamma}$ with $C_s$ a locally bounded function of $t$ by Assumption \ref{assump:gamma_sup}, we have
\begin{equation*}
    \forall m \in \mathbb{N}, \, \forall t \in I, |\alpha_{b_m}(t)| \leq \sup_{t\in I}(C_s(t)) b_m^{-\gamma} = C b_m^{-\gamma}.
\end{equation*}}
\blue{Furthermore, $t \rightarrow Var(\hat{k}_m(t))$ as expressed in \eqref{eq:var_exact_k} is also continuous in $t$, indeed by Definition \ref{def:cont_ass} for a fixed $y$, $t \mapsto \kappa_{t,b_m}(y)$ is continuous and Assumption \ref{assump:intt} allows us to conclude by dominated convergence.}
\blue{Thus, since  $t \rightarrow Var(\hat{k}_m(t))$, $t \mapsto \alpha_{b_m}(t)$, $k$ and $(1-F)^{-1}$ are continuous in $t$ on $I$, $h_m$ is continuous and hence bounded on $I$. }
Hence, 
\begin{align*}
    &\Big| \int_I h_m(t)\frac{\alpha_{b_m}(t)}{m} \dd t\Big| \leq C m^{-1}b_m^{-\gamma} \int_I |h_m(t)| \dd t
\end{align*}
and $\int_I |h_m(t)| \dd t$ tends to $0$ by dominated convergence, thus $J_{2,m} = o(J_{1,m})$. We can therefore take the limit under the integral in \eqref{eq:var_int1_k} which yields
\begin{align}
    \int_I Var(\hat{k}_m(t)) \dd t  = \int_I\frac{\alpha_{b_m}(t)}{m}\frac{k(t)}{1-F(t)} \dd t +
    o\left(m^{-1}b_m^{-\gamma}\right).\label{eq:var_form_k}
\end{align}

Proposition \ref{thm:cvgce_exp_k} provides an equivalent of the expression under the integral for the second term, namely 
 \blue{
\begin{equation*}
     \E[\hat{k}_m(t)] -k(t) =   \sum_{s= 1}^{l } \frac{k^{(s)}(t)}{s!}|\E[(\Zt - t)^{s}]| + o(b_m^{\gamma \beta}). 
\end{equation*}
}
\blue{Note that $I$ is a compact set and $\E[\hat{k}_m(t)] -k(t) - \sum_{s= 1}^{l } \frac{k^{(s)}(t)}{s!}|\E[(\Zt - t)^{s}]| $ is continuous in $t$. Thus, as noted in the proof of Proposition \ref{thm:cvgce_exp_k}, the $o(b_m^{\gamma \beta})$ can be taken to be uniformly continuous with respect to $t$ on the compact set $I$ and we can exchange the integration and the $o$.}
Hence,
\blue{
\begin{align}
    \int_I (\E[\hat{k}_m(t)] - \!k(t))^2 \dd t &=\! \int_I \Big(\sum_{s= 1}^{l } \frac{k^{(s)}(t)}{s!}|\E[(\Zt - t)^{s}]|\Big)^2 \dd t + o(b_m^{2 \gamma \beta}).  
    \label{eq:int_bias_k}
\end{align}
}

Combining \eqref{eq:var_form_k} with \eqref{eq:int_bias_k} yields \eqref{eq:mise_k}.

\subsection{Proof of Corollary \ref{coro:MISE}}
\blue{By Proposition \ref{prop:gamma}, the Gamma kernel is of order $(2, \frac{1}{2})$. }

We have (see the proof of Proposition \ref{prop:gamma} in Section \ref{sec:proof_2.1})
\begin{equation*}
     \alpha_{b_m}(t) \substack{\sim\\ b_m \rightarrow 0} \frac{1}{2\sqrt{\pi tb_m}}.
\end{equation*}
Hence
\begin{align}
     \int_I Var(\hat{k}_m(t)) \dd t  =\frac{1}{2 m \sqrt{\pi b_m}} \int_I t^{-1/2}\frac{k(t)}{1-F(t)}\dd t  + o(m^{-1}b_m^{-1/2}) \label{eq:var_form}
\end{align}
\blue{
Since $|\Lambda(t,b_m)| = 0$ and $Var(t,b_m) = t b_m$ for $t > 2b_m$ , we have by Theorem \ref{thm:MISE_k} for $\beta = 2$
\begin{align*}
    &\int_I \Big(\sum_{s= 1}^{2} \frac{k^{(s)}(t)}{s!}|\E[(\Zt - t)^{s}]|\Big)^2 \dd t = \int_I \Big( k'(t)|\E[(\Zt - t)]| + \frac{k''(t)}{2}\E[(\Zt - t)^{2}]\Big)^2 \dd t\\
    & = \int_I \Big( k'(t)|\Lambda(t,b_m)| + \frac{k''(t)}{2}(\Lambda(t,b_m)^2 + Var(t,b_m))\Big)^2 \dd t\\
    & = \int_I \frac{k''(t)^2}{4} Var(t,b_m)^2 \dd t =    b_m^2 \int_I t^2 \frac{k''(t)^2}{4} \dd t.  
\end{align*}
}

\section{Adaptive bandwidth choice}
\label{sec:minimax}
For the sake of clarity in this section,  we change our notation and write $\hat{k}_b$ instead of $\hat{k}_m$ to refer to the $b$-dependent estimator and emphasize the dependence of the estimator on the bandwidth $b$. 
\subsection{Presentation}
In practice, a statistical study is done with a fixed number of observations $m$, and the choice of the bandwidth can significantly impact the quality of the estimator. In the 90s, Lepski developed a data-driven minimax bandwidth selection method   
(\cite{Lepski_91, Lepski_92,Lepski_93}), which was later modified for density estimation by Goldenshluger and Lepski in \cite{GL_11}.  Let us describe briefly the heuristics behind the approach (see e.g. \cite{Doumic_2012,PCO_17} for more details, for the kernel density estimator).\\
The aim is to select a bandwidth $b$ which minimizes the MSE $\E[(\hat{k}_b(t) - k(t))^2]$ (although another metric could be considered).  The ideal bandwidth which minimizes the MSE, thus being the perfect compromise between bias and variance, is called the "oracle". 
However, this quantity depends on the real hazard rate $k$ and thus cannot be directly computed. By Proposition \ref{lemma:var_eq_k}, the variance of our estimator can be tightly approached by $\frac{k(t)}{1-F(t)}\frac{\alpha_b(t)}{m}$ when the sample size $m$ is reasonably large. And we have for some positive constant $\alpha$
\begin{equation}
    \E[(\hat{k}_b(t) - k(t))^2]  \leq (\E[\hat{k}_b(t)] - k(t))^2 + \alpha e^{||k||_{\infty} t} \frac{1}{mb_m^{\gamma}}.
\end{equation}
However, the expression of the bias depends on $k$ and its derivatives and is much more complicated to approximate. In comparison, the variance is bounded only knowing an upper bound of the hazard rate and in the case of density estimation, the variance can even be bounded independently of the underlying density (\cite{GL_11}).
Given a set of bandwidths, $\mathcal{B}_m$, the idea is to approach the bias term for $\hat{k}_b(t)$ by a data driven estimator, 
\begin{equation}
    \sup_{b' \in \mathcal{B}_m}\Big\{(\hat{k}_{b'}(t) - \hat{k}_{b, b'}(t))^2 - \frac{\chi}{mb'^{\gamma}}\Big\}_+,
\end{equation}
for some constant $\chi$, and where $\hat{k}_{b,b'}$ is an estimator of the hazard rate which depends both on $b$ and $b'$ and $\{x\}_+$ denotes the positive part of $x$, $\max(0,x)$. The optimal bandwidth $\hat{b}(t)$ minimizes the sum of this estimated bias and the estimated variance i.e.
\begin{equation*}
    \hat{b}(t) = \text{argmin}_{b \in \mathcal{B}_m}\Big\{ \sup_{b' \in \mathcal{B}_m}\Big\{(\hat{k}_{b'}(t) - \hat{k}_{b, b'}(t))^2 - \frac{\chi}{mb'^{\gamma}}\Big\}_+ +\frac{\chi}{mb^{\gamma}} \Big\}.
\end{equation*}
Thus defined, $\hat{b}(t)$ is (hopefully) such that for all $b \in \mathcal{B}_m$
\begin{equation}
\label{eq:oracle_exp}
     \E[(\hat{k}_{\hat{b}(t)}(t) - k(t))^2] \leq C\E[(\hat{k}_{b}(t) - k(t))^2] +R_m
\end{equation}
where $R_m \rightarrow 0 $ and $C>0$, which is an oracle-type inequality, as the selected bandwidth does better than all of the bandwidths considered, and is thus the closest to the oracle bandwidth. 

In the theory of kernel estimation, the functional $\hat{k}_{b,b'}$ used in the vast majority of cases is
\begin{equation*}
    \hat{k}_{b,b'}(t) =  \frac{1}{b}\kappa(./b) * \hat{k}_{b'}(t) =  \frac{1}{b'}\kappa(./b') * \hat{k}_{b}(t) = \hat{k}_{b',b}(t).
\end{equation*}
Indeed, in the case of symmetric kernels, the estimator itself is defined as a convolution of the kernel and the empirical hazard/density, making it compatible with a convolution-based definition of $\hat{k}_{b,b'}$. However this is not the case in our general framework, which leads us to use another criterion mentioned in \cite{Doumic_2012,Lepski_22} and used for classical kernels in e.g. \cite{LACOUR20163774,BERTIN2017115,Kerky_anisotropic}, namely 
\begin{equation*}
    \hat{k}_{b,b'}(t) = \hat{k}_{b \vee b'}(t),
\end{equation*}
where $\vee$ denotes the maximum operator.
 The existing results on adaptive minimax bandwidth choice for kernel hazard rate estimation (\cite{recurrent_event_Bouaziz}) consider a kernel defined on a bounded support only,
  which significantly simplifies the proofs as this allows to have, on the support of the kernel, a bounded hazard rate as well as a survival function with a strictly positive lower bound.
  In our case however, we allow the kernel to have an infinite support and, unlike in density estimation (\cite{REYNAUDBOURET2011115}), the assumption of compact support cannot be transferred to the hazard rate as considering a  hazard rate on a compact support implies that it is unbounded and that the survival function tends to $0$ on that support. This entails further assumptions on the kernel to ensure that is decreases sufficiently quickly compared to the survival function. Furthermore, proving oracle type inequalities necessitates the use of concentration inequalities (\cite{PCO_17}), which apply to sums of independent random variables. To that effect, studying the hazard rate estimator as opposed to the density estimator involves introducing an intermediate pseudo-estimator which is a sum of independent terms and studying the difference between the initial estimator and the intermediate one as is done in \cite{recurrent_event_Bouaziz}.\\
In the following, we present the results for both a pointwise bandwidth selection procedure, where a different bandwidth is selected at each point of estimation, and a global procedure where a single bandwidth is selected for an estimation interval. 

\subsection{Pointwise adaptive bandwidth selection}
\label{sec:local_minimax}
In this subsection, we fix $t \geq 0$. We denote $\kinf : = \sup_{|y-t| \leq \lambda} (k(y))$.
We consider a finite set of bandwidth, $\mathcal{B}_m$.
 We define
\begin{align}
V_0(b,t) = \frac{\kappa_0 \log(m)}{mb^{\gamma}}e^{\kinf (t+\lambda)} \kinf C_s(t),\label{eq:v_0}
\end{align}
with $\kappa_0$ a numerical constant \blue{and with  $C_s(t)$ and $\lambda$ as introduced in  Assumptions \ref{assump:gamma_sup} and \ref{assump:compat}}, and
\begin{align*}
     &A_0(b,t) = \sup_{b' \in \mathcal{B}_m}\Big \{ (\hat{k}_{b'}(t) - \hat{k}_{b'\vee b}(t))^2 - V_0(b',t) \Big \}_+.
\end{align*}
The adaptive bandwith choice and kernel estimator are formally defined by:
\begin{align*}
    &\hat{b}(t) = \text{argmin}_{b' \in \mathcal{B}_m}(A_0(b',t) + V_0(b',t) )\\
    &\Check{k}(t) = \hat{k}_{\hat{b}(t)}(t).
\end{align*}
We introduce the following assumption, which is a stronger version of the compatibility assumption \ref{assump:compat}. This assumption is not necessary when estimating with kernels defined on a bounded support, as one can assume that $1/(1-F)$ stays bounded on the support of the kernel. 

\begin{assumption} \label{assump_ad_strong}
 The kernel $\kappa_{t,b}$ and $F$ are strongly compatible, i.e. there exists $\lambda >0$ such that for any fixed $t \in \St$, $\exists b_0 >0 , \, \exists G(t)>0, B(t) >0$, 
     \begin{equation}
    \begin{aligned}
    &\forall b \leq b_0, \forall y \in \St, |y-t|>\lambda ,\implies \frac{\kappa_{t,b}(y)}{1-F(y)}  < G(t)e^{-B(t)/b^{\gamma}}. \label{eq:compatibility_strong}
     \end{aligned}
   \end{equation} 
\end{assumption}

\blue{
\begin{remark}
    Assumption \ref{assump_ad_strong} can be replaced by a weaker version, where the upper bound is expressed as $G(t)f_t(b)$ for some function $f_t$ such that $f_t(b) \xrightarrow[b\to 0]{}0$. In that case, the bandwidths in the bandwidth set need to verify $ \sup_{b_m \in \mathcal{B}_m}(m^2 f_t(b_m)) \leq C$ for some constant $C$ independent of $m$. Thus, the quicker $f_t$ goes to $0$ at $0$, the higher the upper bound on $\mathcal{B}_m$ is. 
\end{remark}}

We state the following proposition, which ensures that the previous assumptions apply to the Gamma kernel. We postpone the proof to the Appendix (see Section \ref{sec:proof_2.2}). 
\begin{prop}\label{prop:gamma_2}
    The Gamma kernel without interior bias defined in \ref{def:gamma} verifies Assumption \ref{assump_ad_strong} with $\gamma = 1/2$. 
\end{prop}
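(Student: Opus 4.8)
The plan is to verify directly the inequality \eqref{eq:compatibility_strong} for the Gamma kernel, using the explicit form \eqref{eq:Gamma_kernel_1}--\eqref{eq:rho_gamma_1}. Fix $t>0$ (the case $t=0$ is trivial since $\St\cap\{|y-t|>\lambda\}$ lies away from the boundary and one needs only a crude bound; more precisely one should state and verify the condition only for $t$ ranging in $\St$ as in the statement). Choose $\lambda>0$ and restrict to $b\le b_0$ with $b_0$ small enough that $t\ge 2b$, so that $\rho(t)_b=t/b$. For $|y-t|>\lambda$ one then has, writing $r=t/b$,
\[
\kappa_{t,b}(y)=\frac{y^{r-1}e^{-y/b}}{b^{r}\Gamma(r)}.
\]
The key quantity to control is therefore $\frac{y^{r-1}e^{-y/b}}{b^{r}\Gamma(r)(1-F(y))}$, and the main work is to show this is bounded by $G(t)e^{-B(t)/b}$ on $\{|y-t|>\lambda\}$.

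The heart of the argument is a Laplace-type (Stirling) estimate. Using Stirling, $\Gamma(r)\sim \sqrt{2\pi/r}\,(r/e)^{r}$, so up to polynomial-in-$r$ factors
\[
\frac{y^{r-1}e^{-y/b}}{b^{r}\Gamma(r)}\;\asymp\; \frac{1}{\sqrt{b}}\cdot\frac{1}{y}\exp\!\Big(r\log\tfrac{y}{b}-\tfrac{y}{b}-r\log r+r\Big)
=\frac{1}{\sqrt b}\cdot\frac1y\exp\!\Big(r\big(\log\tfrac{y}{t}-\tfrac{y}{t}+1\big)\Big),
\]
where I substituted $r=t/b$ and $\log(y/b)-\log r=\log(y/t)$. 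The function $g(u):=\log u - u + 1$ is nonpositive on $(0,\infty)$ with a unique zero at $u=1$ and is strictly concave, so on $\{|y-t|>\lambda\}$ one has $g(y/t)\le -c(t,\lambda)<0$ for a constant depending on $t$ and $\lambda$ (one gets $c$ by taking the max of $g$ over $\{|u-1|\ge \lambda/t\}$; on the unbounded part $y\to\infty$ the term $-y/t$ dominates so $g\to-\infty$, giving a uniform negative bound). Hence the kernel itself satisfies $\kappa_{t,b}(y)\le P(t,b)\exp(-c(t,\lambda)\,t/b)$ with $P$ growing only polynomially in $1/b$, which is absorbed into a slightly smaller exponential rate.

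It then remains to handle the factor $1/(1-F(y))$. For $y$ bounded (i.e. $\lambda<|y-t|$ with $y$ in a fixed compact neighbourhood determined by, say, $y\le t+1$), $1-F(y)$ is bounded below by a positive constant depending on $t$, so no difficulty arises. For $y\to\infty$, since $k$ is bounded by Assumption~A1, $1-F(y)=e^{-\int_0^y k}\ge e^{-\|k\|_\infty y}$, so $1/(1-F(y))\le e^{\|k\|_\infty y}$; this competes with the $e^{-y/b}$ coming from the Gamma density, and for $b\le b_0$ small the $e^{-y/b}$ wins with room to spare, again leaving an exponential-in-$(-1/b)$ bound. Combining the two regimes and choosing $B(t)$ strictly smaller than the rate produced above (to swallow the polynomial prefactors and the $e^{\|k\|_\infty y}$ loss) yields \eqref{eq:compatibility_strong} with $\gamma=1/2$.

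The main obstacle I anticipate is making the exponential decay rate genuinely \emph{uniform} over the whole region $\{y\in\St:\ |y-t|>\lambda\}$, which is unbounded: one must argue separately on the compact part near $t$ (where $g(y/t)$ is bounded away from $0$ by strict concavity and continuity) and on the tail $y\to\infty$ (where one exploits $-y/t$ and the crude lower bound on $1-F$), and then check the Stirling/polynomial prefactors do not spoil the clean $e^{-B(t)/b}$ form — this is why one takes $B(t)$ slightly smaller than the naive rate $c(t,\lambda)t$. A secondary bookkeeping point is to confirm that the constants $G(t),B(t)$ can be taken locally bounded in $t$ as the surrounding theory requires, which follows since $c(t,\lambda)$ and the compact-part lower bound on $1-F$ vary continuously with $t$.
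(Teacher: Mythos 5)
Your proposal is correct and follows essentially the same route as the paper: for $t>0$ you shrink $b_0$ so that $\rho(t)_b = t/b$, apply Stirling to $\Gamma(t/b)$, and observe that the resulting exponent $(t/b)\,g(y/t)$ with $g(u)=\log u - u + 1$ is bounded by a strictly negative constant times $t/b$ on $\{|y-t|>\lambda\}$ (the paper writes this same exponent as $\tfrac1b(t\log(y/t)-y+t)$ and splits the region into $1\le y<6t$, $y\ge 6t$ and $y<1$), while $1/(1-F(y))\le e^{\|k\|_\infty y}$ is dominated by the Gaussian-type decay once $b<1/(2\|k\|_\infty)$. The paper additionally treats the $t\le 2b$ case explicitly (needed only at $t=0$ when one is free to pick $b_0<t/2$), which you correctly flag as easy since $\kappa_{0,b}$ is just the exponential density.
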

Then we have 
\begin{thm}[Pointwise adaptive bandwith estimation] \label{thm:oracle_point}
Let $\hat{k}_b(t)$ be defined by \eqref{eq:k_estim} with a kernel verifying Definition \ref{def:cont_ass}. \blue{Suppose $k \in \Sigma(\beta, L)$
for some unknown $\beta > 0$. 
Assume further that $(\kappa_{t,b})$ is a
kernel of order $(\tilde{\beta}, \gamma)$ with $\tilde{\beta} \geq \beta$,
satisfying Assumptions \ref{ass:order}, \ref{assump:gamma_sup},
\ref{assump:unif_int} and \ref{assump_ad_strong}.}
Consider a finite set of bandwidths $\mathcal{B}_m$ such that $\text{Card}(\mathcal{B}_m) \leq m$ and 
 $$ \forall \;  b \in \mathcal{B}_m, \quad  \max(\frac{1}{m}, \kappa_1 \frac{\log(m)}{m} ) \leq b^{\gamma} \leq \min(\frac{B(t)}{\log(m)},b_0^{\gamma}),$$ 
 
with $B(t)$ and $b_0$ defined in Assumption \ref{assump_ad_strong}, and $\kappa_1 = \frac{16}{9\kinf C_s(t)} (G(t) + C_s(t)e^{\kinf (t+\lambda)})^2$. \\
Let $\displaystyle \mathcal{S}(\mathcal{B}_m) = \sum_{b \in \mathcal{B}_m} \frac{1}{mb^{\gamma}}$. Then, provided $\kappa_0 \geq 80$, there exists constants $C_0, C_1$ and $C_2$  such that $\forall \;  b \in \mathcal{B}_m$
\begin{align}\label{eq:oracle_point_1}
 \E[(\Check{k}(t) - k(t))^2]&\leq  3\E[(\hat{k}_b(t) - k(t))^2] + \blue{C_0b^{2\beta\gamma}} + 6 V_0(b,t) + \frac{\log(m)}{m}(C_1 + C_2\mathcal{S}(\mathcal{B}_m)). 
\end{align}
\end{thm}
\blue{The first term in \eqref{eq:oracle_point_1} is the MSE of the estimator, which we can control by Theorem \ref{thm:MSE_k}. The second term is of same order as the square bias by Theorem \ref{thm:cvgce_exp_k}. By definition of $V_0(b,t)$ in \eqref{eq:v_0} and Proposition \ref{lemma_var_F}, the third term has the same order as the variance of the estimator, penalized by a factor $\log(m)$. The last term in $\frac{\log(m)}{m}$ is a penalization term. Thus, under the assumptions of Theorem \ref{thm:MSE_k}, the right hand side of \eqref{eq:oracle_point_1} is of order $b^{\blue{2\beta \gamma}} + \frac{\log(m)}{m b^{\gamma}} + \frac{\log(m)}{m}\mathcal{S}(\mathcal{B}_m)$.  If the bandwidth set  contains a bandwidth $b$ of order $(\frac{\log(m)}{m})^{\frac{1}{\gamma(2\beta + 1)}}$ and  $\mathcal{S}(\mathcal{B}_m)$ is of order at most $(\frac{m}{\log(m)})^{\frac{1}{\gamma(2\beta + 1)}}$, \blue{the optimal minimax convergence rate of $m^{-\frac{2\beta}{2\beta + 1}}$ (as shown in \cite{HUBER2003209,Pons01011986}) is almost achieved by the local adaptive bandwidth choice procedure.} There is  a logarithmic loss compared to the theoretical optimal asymptotic rate of convergence for the pointwise estimation, as it is the case in \cite{recurrent_event_Bouaziz, Comte_multidim_2013}. This is not the case in the global setting (see Section \ref{sec:global_minimax}).}

\begin{remark}

 The fact that the support of the kernel is unbounded leads to assume that the tails of distribution of jumping times vanish quickly enough for any bandwidth in $\mathcal{B}_m$, which translates mathematically into a condition on the upper bound of $\mathcal{B}_m$, which goes to $0$, a similar assumption to the ones in \cite{GL_11,Comte_multidim_2013}. 
\end{remark}

Considering the estimator under the form \eqref{k:other_form} allows us to rely on a similar proof strategy as in \cite{recurrent_event_Bouaziz}, in the case of recurrent event intensity estimation. Briefly, the proof strategy is to study the error of the adaptive bandwidth estimator by using the triangle inequality to bound it with several terms. These terms can then be handled either using the results from Section \ref{sec:results} or via concentration inequalities applied to the following pseudo-estimator:
\begin{align}
   &\Tilde{k}_b(t) = \frac{1}{m}\sum_{i=1}^m \frac{\kappa_{t,b}(\tau_i)}{1-F(\tau_i)}. \label{def:k_tilde}
\end{align}

We begin with the following technical Lemma, whose proof is presented in the Appendix \ref{sec:lemma_append} and which is similar to Lemma 2 in \cite{recurrent_event_Bouaziz}. \blue{For ease of notations, we consider in the following  that $b_0 \leq 1$. }
\begin{lemma} \label{lemma:emp_distrib}
For $c_F(t) >0 $ a given constant and $c_0 >0$, define the following events
\begin{align*}
    &\Omega_{t}^{*} = \{ \omega : \forall x, F(x) - \hat{F}_m(x) \geq -c_F(t)\}\\
    & \Omega_{c_0}^{*} = \{ \omega : \forall x,| F(x) - \hat{F}_m(x)| \leq c_0\sqrt{m^{-1}log(m)}\}\\
    & \Omega_{c_0,t} = \Omega_{t}^{*} \cap \Omega_{c_0}^{*}.
\end{align*}
For any $l \in \mathbb{N}^*$ and any $t \in \R_+$, if $c_0 \geq \max(\sqrt{l/2}, 1/m)$, $\exists c_l, \tilde{c}_l >0$, 
\begin{equation*}
    \mathbb{P}(\Omega_{c_0,t}^c) \leq(c_l +\frac{\tilde{c}_l}{c_F(t)^{2l}}) m^{-l}.
\end{equation*}
\end{lemma}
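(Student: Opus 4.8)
The statement bounds the probability of the complement of $\Omega_{c_0,t} = \Omega_t^* \cap \Omega_{c_0}^*$, so the plan is to bound $\mathbb{P}((\Omega_t^*)^c)$ and $\mathbb{P}((\Omega_{c_0}^*)^c)$ separately and combine via a union bound, since $\mathbb{P}(\Omega_{c_0,t}^c) \leq \mathbb{P}((\Omega_t^*)^c) + \mathbb{P}((\Omega_{c_0}^*)^c)$. Both events concern the uniform deviation of the empirical cdf $\hat{F}_m$ from $F$, so the natural tool is the Dvoretzky–Kiefer–Wolfowitz (DKW) inequality, which gives $\mathbb{P}(\sup_x |F(x) - \hat{F}_m(x)| > \varepsilon) \leq 2 e^{-2m\varepsilon^2}$.

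\textbf{Bounding $\mathbb{P}((\Omega_{c_0}^*)^c)$.} Here $(\Omega_{c_0}^*)^c = \{\sup_x |F(x) - \hat{F}_m(x)| > c_0 \sqrt{m^{-1}\log m}\}$, so DKW gives directly
\[
\mathbb{P}((\Omega_{c_0}^*)^c) \leq 2 e^{-2 m c_0^2 m^{-1}\log m} = 2 m^{-2 c_0^2}.
\]
The hypothesis $c_0 \geq \sqrt{l/2}$ forces $2c_0^2 \geq l$, hence $2 m^{-2c_0^2} \leq 2 m^{-l}$, which is of the required form with a numerical constant $c_l$ (one may take $c_l = 2$, independent of $t$).

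\textbf{Bounding $\mathbb{P}((\Omega_t^*)^c)$.} Now $(\Omega_t^*)^c = \{\exists x : F(x) - \hat{F}_m(x) < -c_F(t)\} \subseteq \{\sup_x |F(x) - \hat{F}_m(x)| > c_F(t)\}$, so DKW yields $\mathbb{P}((\Omega_t^*)^c) \leq 2 e^{-2 m c_F(t)^2}$. To convert this exponential bound into a polynomial bound of the form $\tilde{c}_l c_F(t)^{-2l} m^{-l}$, I would use the elementary inequality $e^{-u} \leq (l/e)^l u^{-l}$ valid for all $u > 0$ (which follows from $u^l e^{-u} \leq \sup_{v>0} v^l e^{-v} = (l/e)^l$), applied with $u = 2 m c_F(t)^2$. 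This gives
\[
\mathbb{P}((\Omega_t^*)^c) \leq 2 \left(\frac{l}{e}\right)^l \frac{1}{(2 m c_F(t)^2)^l} = \frac{2}{(2e)^l}\,\frac{l^l}{c_F(t)^{2l}}\, m^{-l} =: \frac{\tilde{c}_l}{c_F(t)^{2l}}\, m^{-l},
\]
with $\tilde{c}_l = 2 (l/(2e))^l$, a numerical constant depending only on $l$.

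\textbf{Combining.} Adding the two bounds gives $\mathbb{P}(\Omega_{c_0,t}^c) \leq c_l m^{-l} + \tilde{c}_l c_F(t)^{-2l} m^{-l} = (c_l + \tilde{c}_l/c_F(t)^{2l}) m^{-l}$, as claimed. The condition $c_0 \geq 1/m$ is presumably a mild technical requirement carried over for consistency with how $c_0$ is used elsewhere (e.g. to ensure $c_0\sqrt{m^{-1}\log m}$ dominates trivial fluctuations) and plays no role in this estimate itself. The main (minor) obstacle is simply choosing the right elementary inequality to pass from an exponential tail to a polynomial one with explicit constants; everything else is a direct application of DKW plus a union bound. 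If one prefers to avoid DKW, the bound on $\mathbb{P}((\Omega_t^*)^c)$ could alternatively be obtained pointwise via Hoeffding's inequality followed by a discretization argument over the (at most $m$) jump points of $\hat{F}_m$, but DKW is cleaner and gives the uniform statement immediately.
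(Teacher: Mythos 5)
Your proof is correct and reaches the stated bound, but it differs from the paper's argument in two noteworthy ways.

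First, you apply DKW directly to $\hat{F}_m(x)=\frac{1}{m}\sum_i\one_{\{\tau_i<x\}}$, whereas the standard DKW statement concerns the right-continuous empirical cdf $\tilde{F}_m(x)=\frac{1}{m}\sum_i\one_{\{\tau_i\leq x\}}$. The paper handles this by inserting the auxiliary estimator $\tilde{F}_m$, using $\|\tilde{F}_m-\hat{F}_m\|_\infty\leq 1/m$, and then bounding $\mathbb{P}(\|\hat{F}_m-F\|_\infty\geq\eta)\leq 2e^{-2m(\eta-1/m)^2}\leq 2e^{-2m\eta^2+4\eta}$ for $\eta\geq 1/m$; this is precisely where the hypothesis $c_0\geq 1/m$ (and the extra constant $e^{4c_0}$ absorbed into $c_l$) is used. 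Your shortcut is in fact legitimate: because $F$ is continuous, $\sup_x|F(x)-\hat{F}_m(x)|=\sup_x|F(x)-\tilde{F}_m(x)|$ (approaching jump points from the left and the right, respectively, gives the two inequalities), so the DKW bound for $\tilde{F}_m$ transfers exactly to $\hat{F}_m$ with no correction. It would, however, be cleaner to say this explicitly rather than treat DKW as if it were stated for $\hat{F}_m$; as written, your remark that $c_0\geq 1/m$ "plays no role" is correct only under this unstated sup-norm identification, which is exactly what the paper's $1/m$ correction is there to avoid having to justify.

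Second, for $\mathbb{P}((\Omega_t^*)^c)$ the paper passes from the exponential tail to the polynomial bound via Markov's inequality at order $2l$ applied to $\|\hat{F}_m-F\|_\infty$, followed by an integration-by-parts of the tail, landing on a constant $\tilde{c}_l=2l\big(\int_0^\infty(y+1)^{2l-1}2e^{-2y^2}\dd y+\frac{1}{2l}\big)$. You instead use the elementary bound $e^{-u}\leq (l/e)^l u^{-l}$ with $u=2mc_F(t)^2$, giving $\tilde{c}_l=2(l/(2e))^l$. Both are correct and give the right $c_F(t)^{-2l}m^{-l}$ dependence; yours is shorter and yields an explicit closed-form constant, while the paper's moment route is the one that naturally generalizes if the underlying deviation bound were sub-Gaussian only up to a constant rather than an exact DKW. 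The union bound, the observation that $c_0\geq\sqrt{l/2}$ forces $2m^{-2c_0^2}\leq 2m^{-l}$, and the final combination are all exactly as in the paper.
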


The following Lemma provides control on the difference between $\hat{k}_b(t)$ and $\tilde{k}_b(t)$ on the event $\Omega_{c_0,t}^c$ and the bounds it provides will be useful in the proof of Theorem \ref{thm:oracle_point}. 

\begin{lemma}\label{lemma:event_1}
Suppose that $(\kappa_{t,b})$ is an associated kernel, verifying Assumptions \ref{assump:gamma_sup} and \ref{assump:compat}. Suppose $\forall \; b \in \mathcal{B}_m, \; m b^{\gamma} \geq 1$  and $\text{Card}(\mathcal{B}_m) \leq m$. 
Then, there exists \blue{$C(t) \in \L^{\infty}_{\text{loc}}(\R_+, \R_+^*)$} such that for any $c_0 \geq \max(\sqrt{13/2}, 1/m)$, 
\begin{align*}
    \E[\sup_{b\in \mathcal{B}_m}(\hat{k}_b(t)-\Tilde{k}_b(t))^2 \1_{\Omega_{c_0,t}^c}] \leq C(t) m^{-2}
\end{align*}   
where $\Omega_{c_0,t}$ is defined in Lemma \ref{lemma:emp_distrib}, with $c_F(t) = (1-F(t+\lambda))/2$.
\end{lemma}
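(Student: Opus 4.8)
The plan is to bound the squared difference $(\hat k_b(t)-\tilde k_b(t))^2$ by a \emph{deterministic} quantity uniformly over $b'\in\mathcal B_m$ and $\omega\in\Omega$, and then multiply by $\mathbb P(\Omega_{c_0,t}^c)$, which Lemma \ref{lemma:emp_distrib} controls at the rate $m^{-l}$ for $l$ as large as we please (subject to the lower bound on $c_0$). First I would write, for each fixed $\omega$,
\begin{equation*}
 \hat k_b(t)-\tilde k_b(t) = \frac1m\sum_{i=1}^m \kappa_{t,b}(\tau_i)\Big(\frac{1}{1-\hat F_m(\tau_i)}-\frac{1}{1-F(\tau_i)}\Big)
 = \frac1m\sum_{i=1}^m \kappa_{t,b}(\tau_i)\frac{F(\tau_i)-\hat F_m(\tau_i)}{(1-\hat F_m(\tau_i))(1-F(\tau_i))}.
\end{equation*}
On the whole probability space (no restriction to $\Omega_{c_0,t}$) the only crude-but-true bounds available are $|F-\hat F_m|\le 1$, $1-F(\tau_i)\ge 1-F(\tau_{(m)})\ge 1/(m+1)$ in expectation over ordered statistics — but more simply, since $\hat F_m$ jumps at the data, $1-\hat F_m(\tau_i)\ge 1/m$ for $\tau_i$ in the support of the sum, and the survival factor $1-F$ can be bounded below only after splitting the sum. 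So the key step is to split each summand according to whether $|\tau_i-t|\le\lambda$ or $|\tau_i-t|>\lambda$: on $\{|\tau_i-t|\le\lambda\}$ use Assumption \ref{assump:gamma_sup}, $\kappa_{t,b}(\tau_i)\le C_s(t)b^{-\gamma}\le C_s(t)m$ (using $mb^\gamma\ge1$), together with $1-F(\tau_i)\ge 1-F(t+\lambda)>0$ and $1-\hat F_m(\tau_i)\ge 1/m$; on $\{|\tau_i-t|>\lambda\}$ use the compatibility Assumption \ref{assump:compat} to bound $\kappa_{t,b}(\tau_i)/(1-F(\tau_i))\le G(t)$ directly, again with $1-\hat F_m\ge 1/m$. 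Either way each summand is bounded in absolute value by a polynomial in $m$ times a constant $C(t)$, so $\sup_{b'}|\hat k_b(t)-\tilde k_b(t)|\le C(t)\,m^{p}$ for some fixed power $p$ (one can afford to be wasteful here; $p=2$ or $3$ is fine).

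Then I would apply Cauchy–Schwarz / the crude bound directly:
\begin{equation*}
 \E\Big[\sup_{b'\in\mathcal B_m}(\hat k_b(t)-\tilde k_b(t))^2\,\one_{\Omega_{c_0,t}^c}\Big]
 \le C(t)^2 m^{2p}\,\mathbb P(\Omega_{c_0,t}^c)
 \le C(t)^2 m^{2p}\,\Big(c_l+\tfrac{\tilde c_l}{c_F(t)^{2l}}\Big)m^{-l},
\end{equation*}
invoking Lemma \ref{lemma:emp_distrib} with $c_F(t)=(1-F(t+\lambda))/2$ (which is $>0$ since $k$ is bounded on $\St$, hence $F(t+\lambda)<1$). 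Choosing $l=2p+2$ makes the right-hand side $\le C(t)\,m^{-2}$; the hypothesis $c_0\ge\max(\sqrt{13/2},1/m)$ is exactly what is needed so that $c_0\ge\sqrt{l/2}$ for this choice of $l$ (so $l\le 13$, which fixes the admissible power $p$), matching the numerology in the statement.

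The only mildly delicate point — and the one I expect to be the main obstacle — is making the deterministic bound on $|\hat k_b(t)-\tilde k_b(t)|$ genuinely uniform in $b'$ and valid on \emph{all} of $\Omega$, not just on a good event, since the whole purpose of this lemma is to handle the bad event. The resolution is that on $\Omega_{c_0,t}^c$ we do not have the good lower bound $1-\hat F_m\ge c>0$, so one must fall back on $1-\hat F_m(\tau_i)\ge 1/m$ whenever $\kappa_{t,b}(\tau_i)\ne0$ contributes (equivalently, the sum in $\hat k_b$ ranges over $\tau_i$ with $N_{\tau_i^-}<m$, so $m-N_{\tau_i^-}\ge1$, giving $1-\hat F_m(\tau_i^-)\ge 1/m$ directly from \eqref{eq:k_estim}); this $1/m$ loss is harmless because Lemma \ref{lemma:emp_distrib} buys arbitrarily many powers of $m^{-1}$. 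Everything else is routine bookkeeping with the two assumptions already invoked elsewhere in the paper.
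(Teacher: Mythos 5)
Your argument is correct, and it does the job, but it is a genuinely different route from the paper's. You bound $|\hat k_{b'}(t)-\tilde k_{b'}(t)|$ \emph{deterministically and uniformly} in $b'$ and $\omega$ by splitting the sum according to $|\tau_i-t|\le\lambda$ or $>\lambda$ and using $1-\hat F_m(\tau_i)\ge 1/m$, $|F-\hat F_m|\le 1$, $\kappa_{t,b'}(\tau_i)\le C_s(t)b'^{-\gamma}\le C_s(t)m$, $1-F(\tau_i)\ge 1-F(t+\lambda)>0$ on the near set, and $\kappa_{t,b'}(\tau_i)/(1-F(\tau_i))\le G(t)$ on the far set; this gives a worst-case bound of order $m^2$, so $\sup_{b'}(\hat k_{b'}-\tilde k_{b'})^2\le C(t)m^4$, and multiplying by $\mathbb P(\Omega_{c_0,t}^c)\le (c_l+\tilde c_l/c_F(t)^{2l})m^{-l}$ from Lemma \ref{lemma:emp_distrib} with $l=6$ already suffices (well within the budget $l\le 13$ that $c_0\ge\sqrt{13/2}$ permits). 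The paper instead keeps the randomness: after cancelling the $1/m^2$ against $(1-\hat F_m)^{-2}\le m^2$ and dropping $|\hat F_m-F|\le1$, it applies Cauchy--Schwarz twice — once to peel off the indicator $\one_{\Omega^c_{c_0,t}}$ against $\sqrt{\mathbb P(\Omega^c)}$, once across the i.i.d.\ sum — reducing matters to the fourth moment $\int \kappa_{t,b'}^4 f/(1-F)^4$, which is then bounded by $G(t)^4 + \|k\|_\infty C_s(t)^3 b'^{-3\gamma}/(1-F(t+\lambda))^3$ using the same split by $|y-t|\lessgtr\lambda$ at the integral level. The paper then union-bounds over $\mathcal B_m$, which together with $b'^{-\gamma}\le m$ and $\mathrm{Card}(\mathcal B_m)\le m$ forces $l\ge 13$, matching $c_0\ge\sqrt{13/2}$. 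Your version avoids the union bound and the moment computation at the cost of a cruder power of $m$, which is entirely affordable here since $\mathbb P(\Omega^c)$ decays at any polynomial rate; it also clarifies that the $\sqrt{13/2}$ threshold is not sharp for this particular lemma.

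One small point worth making explicit when writing this up: the bound $1-F(\tau_i)\ge 1-F(t+\lambda)$ on the event $|\tau_i-t|\le\lambda$, and the positivity $1-F(t+\lambda)>0$, both rely on $F(t+\lambda)<1$, which holds because $k$ is bounded; and the use of Assumption \ref{assump:compat} tacitly requires $b'\le b_0(t)$, a constraint inherited from the bandwidth set in Theorem \ref{thm:oracle_point} (the paper's proof has the same implicit reliance).
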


\begin{proof}

We have
\begin{align*}
    \E[(\hat{k}_b(t)-\Tilde{k}_b(t))^2\1_{\Omega_{c_0,t}^c}] &= \frac{1}{m^2}\E\Big[\Big(\sum_{i=1}^m \frac{\kappa_{t,b}(\tau_i)(\hat{F}_m(\tau_i) - F(\tau_i))}{(1-\hat{F}_m(\tau_i))(1-F(\tau_i))}\Big)^2\1_{\Omega_{c_0,t}^c}\Big] \leq \E\Big[\Big(\sum_{i=1}^m \frac{\kappa_{t,b}(\tau_i)}{1-F(\tau_i)}\Big)^2\1_{\Omega_{c_0,t}^c}\Big],
\end{align*}
since $1-\hat{F}_m(\tau_i) \geq m^{-1}$ for all $1 \leq i \leq m$ and $|\hat{F}_m - F| \leq 1$. By applying the Cauchy-Schwarz inequality two times and by independence of $(\tau_i)_{1\leq i \leq m}$, we obtain that
\blue{\begin{align*}
     \E[(\hat{k}_b(t)-\Tilde{k}_b(t))^2\1_{\Omega_{c_0,t}^c}]  &\leq m\E\Big[\1_{\Omega_{c_0,t}^c}\sum_{i=1}^m \frac{\kappa_{t,b}^2(\tau_i)}{(1-F(\tau_i))^2}\Big]\\
    &\leq  m^2 \sqrt{\mathbb{P}(\Omega_{c_0,t}^c)} \Big[ \int_0^{+\infty} \frac{\kappa_{t,b}(y)^4 f(y) }{(1-F(y))^4}\dd y\Big]^{1/2}. 
\end{align*}}
 Furthermore, by Assumptions  \ref{assump:gamma_sup} and \ref{assump:compat}, and recalling that $f = k(1-F)$, 
\begin{align*}
  \int_0^{+\infty} \frac{\kappa_{t,b}(y)^4 f(y) }{(1-F(y))^4}\dd y  & =   \int_{\{|y-t|\leq \lambda\}} \frac{\kappa_{t,b}(y)^4 f(y)} {(1-F(y))^4}\dd y + \int_{\{|y-t|> \lambda\}} \frac{\kappa_{t,b}(y)^4 f(y)} {(1-F(y))^4}\dd y \\
  & \leq  G(t)^4 +\kinf \frac{b^{-3\gamma} C_s(t)^3}{(1-F(t+\lambda))^{3}}. 
\end{align*}
Combining this result with  Lemma \ref{lemma:emp_distrib}, we obtain for all $l \in \mathbb{N}^*$ and $c_0 \geq \max(\sqrt{l/2}, 1/m)$: 
\begin{align*}
  \E[ \sup_{b' \in \mathcal{B}_m} (\hat{k}_{b'}(t)-\Tilde{k}_{b'}(t))^2\1_{\Omega_{c_0,t}^c}] &\leq \sum_{b' \in \mathcal{B}_m}\E[(\hat{k}_{b'}(t)-\Tilde{k}_{b'}(t))^2\1_{\Omega_{c_0,t}^c}] \\
  & \leq  \sum_{b' \in \mathcal{B}_m}  m^{2-l/2} \sqrt{(c_l +\frac{\tilde{c}_l}{c_F(t)^{2l}})} \Big( G(t)^2 +\sqrt{\kinf}\frac{b'^{-3\gamma/2} \sqrt{C_s(t)^3}}{(1-F(t+\lambda))^{3/2}} \Big). 
\end{align*}
By assumption $\forall b' \in \mathcal{B}_m, b'^{-\gamma} \leq m$, and $\text{Card}(\mathcal{B}_m) \leq m$. Thus, \blue{for a fixed $l\geq 13$, }
$$\E[ \sup_{b' \in \mathcal{B}_m} (\hat{k}_{b'}(t)-\Tilde{k}_{b'}(t))^2\1_{\Omega_{c_0,t}^c}] \leq C(t)m^{-2}.$$
\blue{With $C(t) = \sqrt{(c_l +\frac{\tilde{c}_l}{c_F(t)^{2l}})} \Big( G(t)^2 +\sqrt{\kinf}\frac{ \sqrt{C_s(t)^3}}{(1-F(t+\lambda))^{3/2}} \Big) \in L^{\infty}_{\text{loc}}(\R_+, \R_+)$, as $C_s$ and $G$ are also locally bounded functions of $t$ by Assumptions \ref{assump:gamma_sup} and \ref{assump:compat}}.

\end{proof}

We now move on to another Lemma, where we control the difference between $\hat{k}_b(t)$ and $\tilde{k}_b(t)$ on $\Omega_{c_0,t}$. Recall that  $ \mathcal{S}(\mathcal{B}_m) = \sum_{b \in \mathcal{B}_m} \frac{1}{mb^{\gamma}}$. 

\begin{lemma}\label{lemma:event_2}
Suppose $(\kappa_{t,b})$ and $F$ verify the strong compatibility Assumption \ref{assump_ad_strong},  and $(\kappa_{t,b})$ is an associated kernel verifying \ref{assump:gamma_sup}. For all $b \in \mathcal{B}_m$,  suppose also that $\frac{1}{m}\leq b^{\gamma} \leq \min\Big(\frac{B(t)}{\log(m)}, b_0^{\gamma} \Big)$.\\
\blue{Then, for any fixed $c_0 \geq \max(\sqrt{13/2}, 1/m)$,} there exists $A_1(t), A_2(t) \in L_{\text{loc}}^{\infty}(\R_+, \R_+^*)$ such that 
    \begin{align*}
    \E[\sup_{b'\in \mathcal{B}_m}&(\hat{k}_{b'}(t)-\Tilde{k}_{b'}(t))^2\1_{\Omega_{c_0,t}}]  \leq \frac{\log(m)}{m}(A_1(t) + A_2(t) S(\mathcal{B}_m)).
\end{align*}
where $\Omega_{c_0,t}$ is defined in Lemma \ref{lemma:emp_distrib} with $c_F(t) = (1-F(t+\lambda))/2$, and $\tilde{k}_b$ introduced in \eqref{def:k_tilde}.
\end{lemma}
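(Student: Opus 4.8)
The plan is to start from the exact expression for the difference already used in the proof of Lemma \ref{lemma:event_1}, namely
\[
\hat{k}_{b'}(t)-\tilde{k}_{b'}(t) = \frac{1}{m}\sum_{i=1}^m \frac{\kappa_{t,b'}(\tau_i)\,(\hat{F}_m(\tau_i)-F(\tau_i))}{(1-\hat{F}_m(\tau_i))(1-F(\tau_i))},
\]
and to exploit that on the event $\Omega_{c_0,t}$ we have two favourable bounds simultaneously: $1-\hat{F}_m(\tau_i) \geq (1-F(\tau_i)) - c_F(t)$, which for $\tau_i$ near $t$ (i.e.\ $|\tau_i - t|\leq\lambda$) gives $1-\hat{F}_m(\tau_i)\geq (1-F(t+\lambda))/2 >0$ by the choice $c_F(t)=(1-F(t+\lambda))/2$; and $|\hat{F}_m(\tau_i)-F(\tau_i)|\leq c_0\sqrt{m^{-1}\log(m)}$. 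I would split the sum over $\{|\tau_i-t|\leq\lambda\}$ and $\{|\tau_i-t|>\lambda\}$. On the near region, bound the denominator below using $\Omega_t^*$, pull out the factor $c_0\sqrt{\log(m)/m}$ from the numerator increments, and control $\frac1m\sum_i \kappa_{t,b'}(\tau_i)\one_{|\tau_i-t|\leq\lambda}$ in expectation (or its square) using Assumption \ref{assump:gamma_sup}: this term is $O(\log(m)/m)$ after squaring, uniformly, contributing the $C_1(t)\log(m)/m$ piece. On the far region, use the strong compatibility Assumption \ref{assump_ad_strong}: there $\kappa_{t,b'}(\tau_i)/(1-F(\tau_i)) \leq G(t)e^{-B(t)/b'}$, and since $b'^\gamma \leq B(t)/\log(m)$ we get $e^{-B(t)/b'} \leq e^{-B(t)/b'^\gamma}\cdot(\text{something})$ — more carefully, $B(t)/b' \geq B(t)/b'$ and using $b' \leq (B(t)/\log m)^{1/\gamma}$ one obtains a factor $\leq m^{-1}$ (choosing the comparison exponent appropriately), so the far part is negligible of order $m^{-1}$ or smaller, absorbed into the constants.

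Then I would pass from a single $b'$ to the supremum over $\mathcal{B}_m$ by the crude bound $\sup_{b'\in\mathcal{B}_m}(\cdot)^2 \leq \sum_{b'\in\mathcal{B}_m}(\cdot)^2$, as done in Lemma \ref{lemma:event_1}. Taking expectations term-by-term, the near-region contribution for a fixed $b'$ will be of order $\frac{\log(m)}{m}\cdot\E[(\frac1m\sum_i \kappa_{t,b'}(\tau_i)\one_{|\tau_i-t|\leq\lambda})^2]$; expanding the square and separating diagonal from off-diagonal terms, the diagonal gives $\frac1{m^2}\E[\kappa_{t,b'}(\tau_1)^2] \lesssim \frac1{m^2}\sup_y\kappa_{t,b'}(y)\cdot\E[\kappa_{t,b'}(\tau_1)] \lesssim \frac{b'^{-\gamma}}{m^2}\cdot C_s(t)$ by Assumptions \ref{assump:gamma_sup} and the boundedness of $f$ near $t$, and the off-diagonal is $O(1)$ (a product of two first-moment integrals, each bounded). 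Summing over $b'\in\mathcal{B}_m$, the off-diagonal part yields $\mathrm{Card}(\mathcal{B}_m)\cdot O(1)\cdot\frac{\log m}{m}$, which is $O(\log(m)/m\cdot m) $ — too big — so I would need to be more careful: the bounded off-diagonal contribution must actually be of size $O(\frac1{m}\cdot\frac1{b'^\gamma}\cdot\frac{\log m}{m})$ after correctly accounting that the near-region empirical average concentrates around its mean $\int_{|y-t|\leq\lambda}\kappa_{t,b'}(y)f(y)\,dy$, which is $O(1)$, but then the whole quantity is multiplied by $(c_0\sqrt{\log m/m})^2 = O(\log m/m)$ giving $O(\log m/m)$ per $b'$ from the ``mean'' part and a genuinely smaller fluctuation part. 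This ``mean'' part does \emph{not} depend on $b'$ badly, so summing naively over $\mathcal{B}_m$ would give $\mathrm{Card}(\mathcal{B}_m)\frac{\log m}{m}$ — hence the correct accounting must be that the $b'$-uniform part is handled by keeping the supremum (not summing) for the deterministic-mean piece, and only the variance/fluctuation piece (of order $\frac{b'^{-\gamma}}{m}\cdot\frac{\log m}{m}$) is summed, producing exactly $\frac{\log m}{m}\cdot S(\mathcal{B}_m)$ up to the constant $C_2(t)$.

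\textbf{Main obstacle.} The delicate point, and the one I expect to require the most care, is precisely this bookkeeping: obtaining the decomposition into a term that is $O(\log(m)/m)$ \emph{uniformly in $b'$} (hence contributing $C_1(t)\log(m)/m$ after the sup, not after summing) versus a term whose sum over $\mathcal{B}_m$ is controlled by $S(\mathcal{B}_m)$. Concretely one must write $\frac1m\sum_i g_{b'}(\tau_i) = \E[g_{b'}(\tau_1)] + \big(\frac1m\sum_i g_{b'}(\tau_i) - \E[g_{b'}(\tau_1)]\big)$ with $g_{b'}(y)=\kappa_{t,b'}(y)\one_{|y-t|\leq\lambda}/\big((1-F(y))-c_F(t)\big)$, bound the first bracket by a $b'$-free constant using \ref{assump:gamma_sup} together with $\int\kappa_{t,b'}=1$, and bound the second bracket's second moment by $\frac1m\mathrm{Var}(g_{b'}(\tau_1)) \leq \frac1m\E[g_{b'}(\tau_1)^2] \lesssim \frac{C_s(t)b'^{-\gamma}}{m}$ again via \ref{assump:gamma_sup}. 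Multiplying each by $(c_0\sqrt{\log m/m})^2$, summing the resulting squares over $b'\in\mathcal{B}_m$ using $\mathrm{Card}(\mathcal{B}_m)\leq m$ and $b^\gamma \geq 1/m$, and absorbing the negligible far-region $O(m^{-1})$ contribution, yields the claimed bound $\frac{\log(m)}{m}\big(C_1(t)+C_2(t)S(\mathcal{B}_m)\big)$. Tracking that the cross term in $(a+b)^2\leq 2a^2+2b^2$ does not spoil the split, and that $c_0$ enters only through the fixed constants, completes the argument.
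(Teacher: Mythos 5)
Your proposal, after the self-correction in the ``main obstacle'' paragraph, follows essentially the same route as the paper: factor out the $(c_0\sqrt{\log m/m})^2$ contribution from $\hat F_m-F$ on $\Omega_{c_0,t}$, split the sum into $\{|\tau_i-t|\le\lambda\}$ and $\{|\tau_i-t|>\lambda\}$, on the near region decompose the empirical average into its mean (kept under the sup, giving the $b'$-free constant $C_1(t)$) plus a fluctuation whose variance is $O(C_s(t)b'^{-\gamma}/m)$ and whose sum over $\mathcal B_m$ yields $S(\mathcal B_m)$, and on the far region use the strong compatibility bound together with $b'^\gamma\le B(t)/\log m$ and $1-\hat F_m\ge 1/m$. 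The only cosmetic difference is that the paper first applies the uniform denominator lower bound $1-\hat F_m(\tau_i)\ge(1-F(t+\lambda))/2$ on the near region and then does the mean/fluctuation split for $\frac1m\sum\kappa_{t,b'}(\tau_i)$ alone, whereas you fold the bounded denominator into $g_{b'}$; the two bookkeepings are equivalent.
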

\begin{proof}

We split the expectation into two subevents, and use the fact that $| F(x) - \hat{F}_m(x)| \leq c_0\sqrt{m^{-1}log(m)}$ on $\Omega_{c_0,t}$.  
\begin{align}
    \E[\sup_{b'\in \mathcal{B}_m}&(\hat{k}_{b'}(t)-\Tilde{k}_{b'}(t))^2\1_{\Omega_{c_0,t}}] \nonumber&\\
    &\leq \frac{c_0^2 \log(m)}{m}\E\Big[\sup_{b'\in \mathcal{B}_m}\frac{1}{m^2}\Big(\sum_{i=1}^m \frac{\kappa_{t,b'}(\tau_i)}{(1-\hat{F}_m(\tau_i))(1-F(\tau_i))}(\one_{\{|t-\tau_i| \geq \lambda\}} + \one_{\{|t-\tau_i| < \lambda\}})\1_{\Omega_{c_0,t}}\Big)^{2}\Big] \nonumber\\
    & \leq \frac{c_0^2 \log(m)}{m}\E\Big[\sup_{b'\in \mathcal{B}_m}\frac{2}{m^2}\Big(\sum_{i=1}^m \frac{\kappa_{t,b'}(\tau_i)}{(1-\hat{F}_m(\tau_i))(1-F(\tau_i))} \one_{\{|t-\tau_i| < \lambda\}}\1_{\Omega_{c_0,t}}\Big)^{2} \nonumber\\
    & \qquad +\sup_{b'\in \mathcal{B}_m} \frac{2}{m^2}\Big(\sum_{i=1}^m \frac{\kappa_{t,b'}(\tau_i)}{(1-\hat{F}_m(\tau_i))(1-F(\tau_i))}\one_{\{|t-\tau_i| \geq \lambda\}} \1_{\Omega_{c_0,t}}\Big)^{2}\Big].\label{eq:events_sep}
\end{align}
Let us control the first term. 
 With $c_F(t) = (1-F(t+\lambda))/2$, we have on $\Omega_{c_0,t}\cap (|\tau_i-t| < \lambda)$, 
$$1-\hat{F}_m(\tau_i) = 1- F(\tau_i)  + F(\tau_i)-\hat{F}_m(\tau_i)  \geq 1 - F(\tau_i) -(1-F(t+\lambda))/2 \geq (1-F(t+\lambda))/2.$$
Hence, 
\begin{align*}
   \E&\Big[\sup_{b'\in \mathcal{B}_m} \frac{1}{m^2}2\Big(\sum_{i=1}^m \frac{\kappa_{t,b'}(\tau_i)}{(1-\hat{F}_m(\tau_i))(1-F(\tau_i))} \one_{\{|t-\tau_i| < \lambda\}}\1_{\Omega_{c_0,t}}\Big)^{2}\Big]\\
   &\leq \frac{8}{(1-F(t+\lambda))^4} \E\Big[\one_{\{|t-\tau_i| < \lambda\}}\sup_{b'\in \mathcal{B}_m}\frac{1}{m^2}\Big(\sum_{i=1}^m \kappa_{t,b'}(\tau_i) \Big)^{2}\Big] \\
    & \leq \frac{16}{(1-F(t+\lambda))^4} \left(\E\Big[\one_{\{|t-\tau_i| < \lambda\}}\sup_{b'\in \mathcal{B}_m}\Big(\frac{1}{m}\sum_{i=1}^m \kappa_{t,b'}(\tau_i) - \E[\kappa_{t,b'}(\tau_1)] \Big)^{2}\Big] +  \sup_{b'\in \mathcal{B}_m}\E[\one_{\{|t-\tau_i| < \lambda\}}\kappa_{t,b'}(\tau_1)] ^2 \right)\\
    &\leq \frac{16}{(1-F(t+\lambda))^4}\left(\sum_{b'\in \mathcal{B}_m} Var\Big(\one_{\{|t-\tau_i| < \lambda\}}\frac{1}{m}\sum_{i=1}^m \kappa_{t,b'}(\tau_i)\Big) +\sup_{b'\in \mathcal{B}_m}\E[\one_{\{|t-\tau_i| < \lambda\}}\kappa_{t,b'}(\tau_1)] ^2 \right).
\end{align*}
Recalling that $\tau_1$ has the pdf $f = k(1-F) \leq k$,  we have $\E[\kappa_{t,b'}(\tau_1) \one_{\{|t-\tau_1| < \lambda\}}] = \E[f(Z_{t,b'})\one_{\{|t-Z_{t,b'}| < \lambda\}}] \leq \kinf $, with $Z_{t,b'}$ a random variable of pdf $\kappa_{t,b'}$. Furthermore, 
$$Var\Big(\frac{1}{m}\sum_{i=1}^m \kappa_{t,b'}(\tau_i)\Big) \leq \frac{1}{m} \E[ \kappa_{t,b'}(\tau_1)^2] \leq \frac{\kinf \alpha_b'(t) }{m},$$
where $\alpha_b(t) = \int_{\St} \kappa_{t,b}(x)^2 \dd x \leq C_s(t)b^{-\gamma}$,  by Assumption \ref{assump:gamma_sup} and Remark \ref{rem:alpha_beta_bound}. Thus,
\begin{align*}
    \E\Big[\sup_{b'\in \mathcal{B}_m}\frac{1}{m^2}2\Big(\sum_{i=1}^m &\frac{\kappa_{t,b'}(\tau_i)}{(1-\hat{F}_m(\tau_i))(1-F(\tau_i))} \one_{\{|t-\tau_i| < \lambda\}}\1_{\Omega_{c_0,t}}\Big)^{2}\Big] \\
    & \leq \frac{16}{(1-F(t+\lambda))^4}\sum_{b'\in \mathcal{B}_m} \frac{\kinf  \alpha_{b'}(t)}{m} +\frac{16\kinf^2}{(1-F(t+\lambda))^4}\\
    &\leq \frac{16\kinf}{(1-F(t+\lambda))^4}(C_s(t) S(\mathcal{B}_m) + \kinf).
\end{align*}

For the second term of \eqref{eq:events_sep}, by Assumption \ref{assump_ad_strong} and since $\forall b \in \mathcal{B}_m, b^{\gamma} \leq \min\Big( b_0^{\gamma}, \frac{B(t)}{\log(m)}\Big)$, we have
\begin{align*}
    \E\Big[\sup_{b'\in \mathcal{B}_m}&\frac{1}{m^2} 2\Big(\sum_{i=1}^m \frac{\kappa_{t,b'}(\tau_i)}{(1-\hat{F}_m(\tau_i))(1-F(\tau_i))}\one_{\{|t-\tau_i| \geq \lambda\}} \Big)^{2}\1_{\Omega_{c_0,t}}\Big]
    \leq \E\Big[\sup_{b'\in \mathcal{B}_m} 2\Big(\sum_{i=1}^m \frac{\kappa_{t,b'}(\tau_i)}{(1-F(\tau_i))}\one_{\{|t-\tau_i| \geq \lambda\}} \Big)^{2}\Big]\\
    &\leq 2m^2 G(t)^2\sup_{b'\in \mathcal{B}_m} e^{-2B(t)/b'^{\gamma}}  \leq 2m^2 G(t)^2e^{-2\log(m)} \leq 2G(t)^2. 
\end{align*}
\blue{This yields the announced result with $A_1(t) = \frac{16 c_0 \kinf^2}{(1-F(t+\lambda))^4}+2 c_0 G(t)^2 \in L_{\text{loc}}^{\infty}(\R_+, \R_+^*) $ and $A_2(t) =  \frac{16 c_0 \kinf C_s(t)}{(1-F(t+\lambda))^4} \in L_{\text{loc}}^{\infty}(\R_+, \R_+^*) $ by Assumptions \ref{assump:gamma_sup} and \ref{assump:compat}. }
\end{proof}

We now move on to the proof of Theorem \ref{thm:oracle_point}, the pointwise oracle inequality. The proof follows the steps of what is done in \cite{recurrent_event_Bouaziz} for the ratio kernel estimator and classical symmetric kernels on a bounded support. 
\begin{proof}[Proof of Theorem \ref{thm:oracle_point}]
In this proof, $C_1(t)$ and $C_2(t)$ denote positive $t-$dependent constants for which the value can change from line to line. 

\textbf{Step 1} We start by decomposing $(\Check{k}(t) - k(t))^2$, where $\check{k}(t)$ is defined by \eqref{eq:v}. For all $b \in \mathcal{B}_m$, we have
\begin{align*}
    (\Check{k}(t) - k(t))^2 &\leq 3(\Check{k}(t) - \hat{k}_{b\vee \hat{b}}(t))^2 +3(\hat{k}_b(t)-\hat{k}_{b\vee \hat{b}}(t))^2+ 3(\hat{k}_b(t) - k(t))^2\\
    & \leq  3V_0(\hat{b}(t), t) + 3A_0(b,t)+ 3V_0(b,t) + 3A_0(\hat{b},t) +3(\hat{k}_b(t) - k(t))^2\\
    & \leq 6A_0(b,t) + 6V_0(b,t) + 3(\hat{k}_b(t) - k(t))^2
\end{align*}
Taking the expectation in the previous inequality, the only unknown term is $\E[A_0(b,t)]$ as $V_0$ is deterministic and an equivalent expression of $\E[(\hat{k}_b(t) - k(t))^2]$ is given by Theorem \ref{thm:MSE_k}. 

We start with the following decomposition,
\begin{align*}
    \E[A_0&(b,t)] = \E\Big[\sup_{b' \in \mathcal{B}_m}\Big \{ (\hat{k}_{b'}(t) - \hat{k}_{b'\vee b}(t))^2 - V_0(b',t) \Big \}_+\Big]\\
    & \leq 5\E[\sup_{b' \in \mathcal{B}_m}(\hat{k}_{b'}(t) - \tilde{k}_{b'}(t))^2] +  5\E\Big[\sup_{b' \in \mathcal{B}_m}\Big \{ (\tilde{k}_{b'}(t) - \E[ \tilde{k}_{b'}(t)])^2 - V_0(b',t)/10 \Big \}_+\Big] \\
     & + 5\sup_{b' \in \mathcal{B}_m}\Big \{ (\E[ \tilde{k}_{b'}(t)] -\E[ \tilde{k}_{b' \vee b}(t)])^2 \Big \}+5\E\Big[\sup_{b' \in \mathcal{B}_m}\Big \{ (\tilde{k}_{b\vee b'}(t) - \E[\tilde{k}_{b\vee b'}(t)])^2 -V_0(b',t)/10 \Big\}_+\Big] \\
    &+  5\E\Big[\sup_{b' \in \mathcal{B}_m}\Big \{ (\tilde{k}_{b\vee b'}(t) - \hat{k}_{b\vee b'}(t))^2 \Big\}\Big] \\
     & \leq 10 \E[\sup_{b' \in \mathcal{B}_m}\Big \{ (\tilde{k}_{ b'}(t) - \hat{k}_{b'}(t))^2 \Big\}] +  10\E\Big[\sup_{b' \in \mathcal{B}_m}\Big \{ (\tilde{k}_{b'}(t) - \E[ \tilde{k}_{b'}(t)])^2 - V_0(b',t)/10 \Big \}_+\Big] \\
   & +  5\sup_{b' \in \mathcal{B}_m}\Big \{ (\E[ \tilde{k}_{b'}(t)] -\E[ \tilde{k}_{b' \vee b}(t)])^2 \Big \}.
   \end{align*}

\textbf{Step 2} For $T_1 :=  10 \E[\sup_{b' \in \mathcal{B}_m}\Big \{ (\tilde{k}_{ b'}(t) - \hat{k}_{b'}(t))^2 \Big\}] $, we proceed by decomposing the expectation along $\Omega_{t,c_0}$ as defined in Lemma \ref{lemma:emp_distrib} for some $c_0 \geq \sqrt{13/2}$. 
We have by Lemmas \ref{lemma:event_1} and \ref{lemma:event_2},
\begin{align}
   T_1 \leq C \frac{\log(m)}{m}(A_1(t) + A_2(t) S(\mathcal{B}_m)). \label{eq:T1}
\end{align}
for some constant $C$. 

\textbf{Step 3} 
We move on to controlling 
\begin{align*}
    T_2 := 5\sup_{b' \in \mathcal{B}_m}\Big \{ (\E[ \tilde{k}_{b'}(t)] -\E[ \tilde{k}_{b' \vee b}(t)])^2 \Big \} &= 5\sup_{b' \leq b}\Big \{ (\E[ \tilde{k}_{b'}(t)] -\E[ \tilde{k}_{b}(t)])^2 \Big \}.
\end{align*}
\blue{We have $\E[\tilde{k}_b(t)] = \E[k(Z_{t,b})]$. Thus, by equation \eqref{eq:control_A} in the proof of Proposition \ref{prop:inequ}, and using equation \eqref{assump:gamma_1}, we have,
\begin{align*}
    (\E[ \tilde{k}_{b'}(t)] -\E[ \tilde{k}_{b}(t)])^2 \leq 2(\E[ \tilde{k}_{b'}(t)]- k(t))^2 + 2 (k(t)-\E[ \tilde{k}_{b}(t)])^2 \leq 2 C(t) (b^{2\gamma \beta} + b'^{2\gamma \beta}).
\end{align*}
And thus,
\begin{align}
    T_2 &\leq 10 C(t) (b^{ 2\beta \gamma}+ \sup_{b'\leq b}{b'^{2\beta \gamma}}) \leq 20 C(t) b^{2\beta \gamma}, \label{eq:T2}
\end{align}}
where $C(t) = \sum_{n=1}^{l} \frac{|k^{(n)}(t)|}{n!}C_n(t) + \frac{L}{l!} C_{\beta}(t)$.
\textbf{Step 4} 
Let us move on to $T_3 := 10\E\Big[\sup_{b' \in \mathcal{B}_m}\Big \{ (\tilde{k}_{b'}(t) - \E[ \tilde{k}_{b'}(t)])^2 - V_0(b',t)/10 \Big \}_+\Big] $.
We want to apply Bernstein's inequality to $S_m = m\Tilde{k}_b(t) = \sum_{i=1}^m \frac{\kappa_{t,b}(\tau_i)}{1-F(\tau_i)}$. We rely on similar arguments as in the proof of Theorem 2 in \cite{recurrent_event_Bouaziz}.
Bernstein's inequality applied to $S_m$ (see \cite{Massart2007} p.26) reads as follows 
\begin{equation*}
    \forall x \in \R_+, \mathbb{P}(|S_m - \E[S_m]|^2 \geq x) \leq 2\exp\Big(-\frac{x}{2m(w(b) + h(b)\sqrt{x}/3)}\Big), 
\end{equation*}

for all $w(b)$ and $h(b)$ verifying $\frac{\kappa_{t,b}(\tau_i)}{1-F(\tau_i)} \leq h(b)$ and $\text{Var}(\frac{\kappa_{t,b}(\tau_i)}{1-F(\tau_i)}) \leq w(b)$. Here, we take:
\begin{align}
    &h(b) := G(t) + \frac{C_s(t)}{b^{\gamma}(1-F(t+\lambda))}, \quad \text{ and } w(b) := G(t)^2 + \frac{\kinf C_s(t)}{b^{\gamma}(1-F(t+\lambda))},  \label{eq:h_w(b)}
\end{align}
which verify the above conditions by  Assumptions \ref{assump:gamma_sup} and \ref{assump_ad_strong} and Remark \ref{rem:alpha_beta_bound}.

Using the inequalities $1/(a+b) \geq \min(1/2a, 1/2b)$  and  $\sqrt{a+b} \geq (\sqrt{a} + \sqrt{b})/\sqrt{2}$,  the Bernstein inequality can be rewritten as
\begin{align*}
    \mathbb{P}(|\Tilde{k}_b(t) - \E[ \tilde{k}_b(t)]|^2 \geq V_0(b,t)/10 + x)&\leq 2\exp\Big(-m\frac{V_0(b,t)/10 + x}{2(w(b) + h(b)\sqrt{V_0(b,t)/10 + x}/3) }\Big) \\
    & \leq  2 \max\Big(\exp(-m\frac{V_0(b,t) + 10x}{40w(b)}), \exp(-m\frac{3\sqrt{V_0(b,t)} + 3\sqrt{10x}}{4\sqrt{20}h(b)})\Big).
\end{align*}

Let us compute a bound for the previous equation. 

First, recall that $V_0(b,t) = \frac{\kappa_0 \log(m)}{mb^{\gamma}}e^{\kinf (t+\lambda)} ||k||_{\infty} C_s(t)$. 
\blue{Since $1-F(t+\lambda) = e^{-\int_0^{t+\lambda} k(u) \dd u} \geq e^{-\kinf (t+\lambda)}$, we have  $(1-F(t+\lambda))e^{\kinf  (t+\lambda)} \geq 1$.} Furthermore, as $\kappa_0\geq 80$ and $b^\gamma \leq B(t)/\log(m)$:
\begin{align}
    \frac{mV_0(b,t)}{40w(b)}
    & \geq \log(m) \frac{\kappa_0}{40} \Big(1- \frac{(1-F(t+\lambda)) b^{\gamma} G(t)^2}{ C_s(t) \kinf + b^{\gamma}G(t)^2 (1-F(t+\lambda)) }\Big)
  \nonumber \\
   &\geq 2\log(m) -  2\frac{B(t)(1-F(t+\lambda)) G(t)^2}{C_s(t) \kinf}.\label{eq:bound_V01}
\end{align}
Furthermore, since $b^{\gamma} \geq \kappa_1 \frac{\log(m)}{m}$ and $b^{\gamma}  \leq 1$, 
\begin{align*}
    \frac{3m\sqrt{V_0(b,t)}}{4\sqrt{20}h(b)} 
    & \geq  3\sqrt{m\log(m)b^{\gamma}}\frac{\sqrt{\kappa_0}}{4\sqrt{20}}\frac{\sqrt{ e^{\kinf (t+\lambda)} \kinf C_s(t)}}{G(t)b^{\gamma} + \frac{C_s(t)}{(1-F(t+\lambda))}}\\
    & \geq \frac{3\sqrt{80}}{4\sqrt{20}}\log(m) \frac{\sqrt{\kappa_1 \kinf C_s(t)}}{G(t) + \frac{C_s(t)}{1-F(t+\lambda)}}\\
    & \geq \frac{3}{2}\log(m) \frac{\sqrt{\kappa_1 \kinf C_s(t)}}{G(t) + C_s(t)e^{\kinf(t+\lambda)}}. 
\end{align*}
Thus, as by assumption   $\kappa_1 \kinf C_s(t) = \frac{16}{9} (G(t) + C_s(t)e^{\kinf(t+\lambda)})^2$
we have 
\begin{equation}
     \frac{m\sqrt{V_0(b,t)}}{40h(b)} \geq 2\log(m).\label{eq:low_bound_V0}
\end{equation}

Finally, using the lower bounds on $V_0(b,t)$ provided by equations \eqref{eq:bound_V01} and \eqref{eq:low_bound_V0}, we have 
\begin{align*}
    \mathbb{P}(|\Tilde{k}_b(t) - \E[ \tilde{k}_b(t)]| \geq \sqrt{V_0(b,t)/10 + x})& \leq 2m^{-2}\max\Big(e^{-mx/4w(b) + \frac{2B(t)G(t)^2}{\kinf C_s(t)}} , e^{-3m\sqrt{x}/4\sqrt{2}h(b)}\Big). 
\end{align*}
By integrating the previous inequality, using the expressions of $h(b)$ and $w(b)$ in equations \eqref{eq:h_w(b)}, as well as the assumption that $\forall b \in \mathcal{B}_m, b^{\gamma} \geq \frac{1}{m}$, we obtain
\begin{align*}
    \E[\{|\Tilde{k}_b(t) - \E[ \tilde{k}_b(t)]|^2 -V_0(b,t)/10 \}_+] &\leq 2m^{-2}\max\Big(e^{\frac{2B(t)G(t)^2}{\kinf C_s(t)}}\frac{4w(b)}{m}, \frac{64h(b)^2}{9m^2} \Big) \\
    & \leq m^{-2} C \max\Big(\frac{1}{mb^{\gamma}}, \frac{1}{m^2b^{2\gamma}} \Big)\leq C m^{-2}
\end{align*}
for some constant $C$. This inequality yields finally,
\begin{align*}
    T_3 &\leq 10 \sum_{b' \in \mathcal{B}_m}  \E[\{|\Tilde{k}_{b'}(t) - \E[ \tilde{k}_{b'}(t)]|^2 -V_0(b',t)/10 \}_+]  \leq  C \text{Card}(\mathcal{B}_m)m^{-2} \leq  C m^{-1}.
\end{align*} 

Finally, putting all of the bounds together, this yields equation \eqref{eq:oracle_point_1}. 
\end{proof}
\subsection{Global adaptive bandwidth selection}
\label{sec:global_minimax}

In this section, we present the global adaptive bandwidth selection procedure. 
Consider a finite interval $I = [T_1, T_2] \subset \St$. We will write $||f|| = (\int_I f(x)^2 \dd x)^{1/2}$ and we denote $\kinfI : = \sup_{t \in I} \Big(\sup_{|y-t| \leq \lambda} (k(y))\Big)$. As in the pointwise case, we introduce
\begin{align}
V(b) = \frac{\kappa_2}{mb^{\gamma} }\int_I G(t)^2 + \kinfI C_s(t)e^{\kinfI (t+\lambda)} \dd t, \label{eq:v}
\end{align}
with $\kappa_2$ a strictly positive numerical constant, and 
\begin{align*}
     &A(b) = \sup_{b' \in \mathcal{B}_m}\Big \{ ||\hat{k}_{b'} - \hat{k}_{b'\vee b}||^2 - V(b') \Big \}_+\\
    &\hat{b} = \text{argmin}_{b' \in \mathcal{B}_m}(A(b') + V(b') )\\
    &\check{k} = \hat{k}_{\hat{b}}.
\end{align*}

We introduce the following assumption,
\begin{assumption}\label{assump:intt}
    For any interval $I \subset \St$, there exists $b_0 >0$ such that for any $b \leq b_0$,
    \begin{equation*}
       \exists R_1 >0, \forall y \geq 0, \int_I \kappa_{t,b}(y) \dd t \leq R_1, 
    \end{equation*}
        \begin{equation*}
       \exists R_2>0, \eta \geq 0, \forall y \geq 0, \int_I \kappa_{t,b}(y)^2 \dd t \leq R_2\frac{1}{b^{\gamma(1+\eta)}}. 
    \end{equation*}
\end{assumption}

\begin{remark}
        In the case of classical symmetric kernels, the roles of $t$ and $y$ are interchangeable, making Assumption \ref{assump:intt} redundant with the assumption on the integral of the kernel over $y$. In that case, one therefore has $\eta = 0$, but this is not generally verified by associated kernels. For example, $\eta = 1$ for the Gamma kernel. Assumption \ref{assump:intt} is similar to Assumption $(A^{\alpha}_2)$ in \cite{esstafa:hal-04112846}, in the case of density estimation. 
\end{remark}

The proof of the following proposition can be found in the Appendix (Section \ref{sec:proof_2.2}). 

\begin{prop}\label{prop:gamma_2_global}
    The Gamma kernel without interior bias defined by \ref{def:gamma} verifies Assumption \ref{assump:intt} with $\gamma = 1/2$ and $\eta = 1$.
\end{prop}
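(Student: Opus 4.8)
The plan is to verify the two bounds of Assumption \ref{assump:intt} for the Gamma kernel density $\kappa_{t,b}(y) = y^{\rho(t)_b-1}e^{-y/b}/(b^{\rho(t)_b}\Gamma(\rho(t)_b))$, with $\gamma = 1/2$. For the first bound, I would fix $y\ge 0$, $b\le 1$ and $I=[T_1,T_2]$, and estimate $\int_I \kappa_{t,b}(y)\dd t$ by splitting $I$ according to whether $t\ge 2b$ (so $\rho(t)_b = t/b$) or $t<2b$ (so $\rho(t)_b = \tfrac14(t/b)^2+1$); on the short interval $[T_1,2b]\cap I$ of length at most $2b$, $\kappa_{t,b}(y)$ is uniformly bounded by $C_s(t)b^{-1/2}\le C b^{-1/2}$ (Assumption \ref{assump:gamma_sup}, already proved in Proposition \ref{prop:gamma}), so that piece contributes $O(b^{1/2}) = O(1)$. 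On $[2b,T_2]\cap I$ the change of variables $\rho = t/b$, $\dd t = b\,\dd\rho$, turns the integral into $\int y^{\rho-1}e^{-y/b}/(b^{\rho-1}\Gamma(\rho))\dd\rho = \int (y/b)^{\rho-1}e^{-y/b}/\Gamma(\rho)\dd\rho$, which is exactly $\int$ of the Poisson/Gamma mass function in the shape parameter; recognizing this as (up to the $e^{-y/b}$ factor and a shift) an integral of a bounded quantity against $\dd\rho$, one gets a bound uniform in $y$ and $b$. Concretely, $(y/b)^{\rho-1}e^{-y/b}/\Gamma(\rho)$ is the density (in $y$, at fixed $\rho$) rescaled, but as a function of $\rho$ one can bound it crudely: its maximum over $\rho$ is $O(1)$ uniformly in $y/b$ by Stirling, and it decays, so the integral over $\rho\in[2,T_2/b]$ is $O(1)$. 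This gives $R_1$.

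For the second bound I would proceed the same way with $\kappa_{t,b}(y)^2$. The short-interval piece $[T_1,2b]\cap I$ contributes at most $(2b)\cdot (C b^{-1/2})^2 = O(b^{1/2}) = O(b^{-1/2(1+\eta)})$ with $\eta\ge 1$, fine. On $[2b,T_2]\cap I$, $\kappa_{t,b}(y)^2 = (y/b)^{2\rho-2}e^{-2y/b}/(b^2\Gamma(\rho)^2)$; the change of variables $\rho=t/b$ yields $b^{-1}\int (y/b)^{2\rho-2}e^{-2y/b}/\Gamma(\rho)^2 \dd\rho$. The prefactor $b^{-1} = b^{-2\gamma} = b^{-\gamma(1+\eta)}$ with $\eta = 1$, so it remains to show $\int_2^{T_2/b} (y/b)^{2\rho-2}e^{-2y/b}/\Gamma(\rho)^2\,\dd\rho$ is bounded uniformly in $y$ and $b$. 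Writing $x = y/b$, this is $\int x^{2\rho-2}e^{-2x}/\Gamma(\rho)^2\dd\rho$; I would bound $x^{2\rho-2}e^{-2x}/\Gamma(\rho)^2 \le \big(\sup_\rho x^{\rho-1}e^{-x}/\Gamma(\rho)\big)\cdot \big(x^{\rho-1}e^{-x}/\Gamma(\rho)\big)$, use Stirling to see the supremum is $O(1)$ uniformly in $x$ (the Gamma density at its mode is $O(\rho^{-1/2}) = O(1)$ after the appropriate rescaling — note $\kappa_{t,b}$ at its mode behaves like $(2\pi t b)^{-1/2}$ which is where $b^{-1/2}$ comes from, and for $t$ bounded below on $[2b,T_2]$ this is $O(b^{-1/2})$ but the $x^{\rho-1}e^{-x}/\Gamma(\rho)$ factor without the $1/b$ is $O(1)$), and then $\int x^{\rho-1}e^{-x}/\Gamma(\rho)\dd\rho \le C$ from the first part. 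This yields $R_2$ and $\eta = 1$.

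The main obstacle I expect is making the "integral over the shape parameter $\rho$ of a Gamma density is $O(1)$ uniformly in the argument" step rigorous and uniform in $b$, since near $\rho = 2$ (i.e. $t$ near $2b$) the estimates are most delicate, and one must handle the $x = y/b$ small, moderate, and large regimes separately: for $x$ large the factor $e^{-x}$ (resp. $e^{-2x}$) kills polynomial growth; for $x$ of order $\rho$ Stirling gives the $O(1)$ peak; for $x$ small the integrand is small unless $\rho$ is near its minimum. A clean way to organize this is to recognize $\sum$- or $\int$-type identities for Gamma/Poisson kernels and to cite the asymptotics already established in the proof of Proposition \ref{prop:gamma} (Section \ref{sec:proof_2.1}), where the equivalent $\alpha_{b}(t)\sim (2\sqrt{\pi t b})^{-1}$ and the relevant Gamma-function Stirling estimates are worked out; reusing those bounds keeps this proof short. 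The rest is bookkeeping with the two-regime definition of $\rho(t)_b$.
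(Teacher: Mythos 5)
Your plan coincides with the paper's proof: the same split of $I$ at $t=2b$ into the two regimes of $\rho(t)_b$, the same change of variables $\rho=t/b$ turning $\int_{2b}^{T_2}\kappa_{t,b}(y)\,\dd t$ into $\int_{2}^{T_2/b}(y/b)^{\rho-1}e^{-y/b}/\Gamma(\rho)\,\dd\rho$, the same Stirling control of that integral, and for $\int_I\kappa_{t,b}(y)^2\,\dd t$ the same trick of peeling off one factor of $\kappa_{t,b}(y)$ bounded by $O(b^{-1})$ uniformly in $t\in I$ and $y$. One point to tighten when you write it out: ``the peak over $\rho$ is $O(1)$ and it decays'' is not by itself enough to conclude that $\int_2^{T_2/b}(y/b)^{\rho-1}e^{-y/b}/\Gamma(\rho)\,\dd\rho$ is bounded uniformly in $y/b$ --- the actual balance, which you should carry quantitatively, is that the peak height is $O((y/b)^{-1/2})$ by Stirling while the effective bump width is $O((y/b)^{1/2})$ (the second derivative of the log-integrand in $\rho$ is $-\psi'(\rho)\approx -1/\rho$, and $\rho\approx y/b$ at the peak), so it is the product height $\times$ width that yields the uniform $O(1)$ bound, not the peak height alone.
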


Then we have 
\begin{thm}[Global adaptive bandwith estimation]\label{thm:oracle_global}
Let $\hat{k}_b(t)$ be defined by \eqref{eq:k_estim} with a kernel verifying Definition \ref{def:cont_ass}. \blue{Suppose $k \in \Sigma(\beta,l)$ and $(\kappa_{t,b})$ is an associated kernel of order $(\beta,\gamma)$, with $\tilde{\beta} \geq \beta$,  verifying  Assumptions \ref{ass:order}, \ref{assump:gamma_sup}, \ref{assump:unif_int} and \ref{assump_ad_strong} for all $t \in I$, as well as \ref{assump:intt}. 
Consider a finite set of bandwidths $\mathcal{B}_m$ such that $\text{Card}(\mathcal{B}_m) \leq m$ and}
\begin{equation}
\label{hyp:bglobal}
     \forall b \in \mathcal{B}_m, \;  1  \leq mb^{\gamma(1+\eta)}  \text{ and }  b^\gamma \leq \min(B/\log(m),b_0^{\gamma}),
\end{equation}
with $B = \inf_I(B(t))$ as defined in Assumption \ref{assump_ad_strong}, which we suppose strictly positive and $\eta$ defined in Assumption \ref{assump:intt}. \\
Suppose also that $\exists \; A >0$ such that for any constant $C>0$,
\begin{equation}
     \sum_{b\in \mathcal{B}_m} e^{-\frac{C}{\sqrt{b^{\gamma}}}}   \leq A \log(m). \label{eq:assump_sum_exp}
\end{equation}
Then, provided $\kappa_2 \geq 20$, we have $ \forall \; b \in \mathcal{B}_m,$
\begin{align}\label{eq:oracle_int_2}
\E[||\Check{k} - k||^2]&\leq 3\E[||\hat{k}_b - k||^2] +\blue{\Tilde{C}_0b^{2 \beta \gamma}} + 6 V(b) +  \frac{\log(m)}{m}(\Tilde{C}_1 + \Tilde{C}_2S(\mathcal{B}_m)), 
\end{align}
for some nonnegative constants $\Tilde{C}_0, \Tilde{C}_1$ and $\Tilde{C}_2$ and with $S(\mathcal{B}_m) = \sum_{b \in \mathcal{B}_m} \frac{1}{mb^{\gamma}}$.\end{thm}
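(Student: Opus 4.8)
The plan is to adapt the proof of Theorem~\ref{thm:oracle_point} to the $L^2(I)$ setting, replacing the pointwise square error $(\cdot)^2$ by $\|\cdot\|^2$ and keeping the intermediate estimator $\tilde{k}_b(t)=\frac{1}{m}\sum_{i=1}^m\frac{\kappa_{t,b}(\tau_i)}{1-F(\tau_i)}$ of \eqref{def:k_tilde}. \textbf{Step~1 (deterministic decomposition).} For any $b\in\mathcal{B}_m$, the triangle inequality and the fact that $\hat{b}$ minimises $A(\cdot)+V(\cdot)$ give $\|\check{k}-k\|^2\le 6A(b)+6V(b)+3\|\hat{k}_b-k\|^2$; the term $3\,\E[\|\hat{k}_b-k\|^2]$ is kept as the oracle term, and $\E[A(b)]$ splits, exactly as in the pointwise case, into $T_1=10\,\E\big[\sup_{b'\in\mathcal{B}_m}\|\hat{k}_{b'}-\tilde{k}_{b'}\|^2\big]$, $T_2=5\sup_{b'\in\mathcal{B}_m}\|\E[\tilde{k}_{b'}]-\E[\tilde{k}_{b'\vee b}]\|^2$, and $T_3=10\,\E\big[\sup_{b'\in\mathcal{B}_m}\{\|\tilde{k}_{b'}-\E[\tilde{k}_{b'}]\|^2-V(b')/10\}_+\big]$.

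\textbf{Steps~2--3 ($T_1$ and $T_2$).} For $T_1$ I would establish the integrated analogues of Lemmas~\ref{lemma:event_1} and \ref{lemma:event_2}: split along the event $\Omega_{c_0}$ of Lemma~\ref{lemma:emp_distrib} with $c_F=\tfrac12\inf_{t\in I}(1-F(t+\lambda))>0$, use Fubini to exchange $\E$ and $\int_I$, bound the ``near'' part $\{|t-\tau_i|<\lambda\}$ with Assumption~\ref{assump:intt} (through $\int_I\kappa_{t,b}(y)\,\dd t\le R_1$ and $\int_I\kappa_{t,b}(y)^2\,\dd t\le R_2\,b^{-\gamma(1+\eta)}$, together with $mb^{\gamma(1+\eta)}\ge1$) and Remark~\ref{rem:alpha_beta_bound}, and kill the ``far'' part $\{|t-\tau_i|\ge\lambda\}$ using the exponential factor $e^{-B/b}$ from the strong compatibility Assumption~\ref{assump_ad_strong} together with $b^\gamma\le B/\log m$; this yields $T_1\le\frac{\log m}{m}(\tilde{C}_1+\tilde{C}_2 S(\mathcal{B}_m))$, the $S(\mathcal{B}_m)$ coming from $\sum_{b'}\frac{1}{m}\int_I\alpha_{b'}(t)\,\dd t$. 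For $T_2$, note that $\E[\tilde{k}_b(t)]=\E[k(Z_{t,b})]$ and $b'\vee b=b$ for $b'\le b$, so integrating the bias bound \eqref{eq:control_A} over $I$ and using Assumption~\ref{assump:gamma_1} with the local boundedness of $C_1,C_2$ on the compact $I$ gives $T_2\le\tilde{C}\,b^{q\gamma}$ with $q=2$, improved to $q=4$ under $\Lambda(t,b)=O(b^{2\gamma})$; this produces the $\tilde{C}_0 b^{2\gamma}$ (resp.\ $b^{4\gamma}$) term.

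\textbf{Step~4 ($T_3$): the main obstacle.} The scalar Bernstein argument used pointwise no longer applies, since $\|\tilde{k}_b-\E[\tilde{k}_b]\|$ is the supremum of an empirical process. I would write $\|\tilde{k}_b-\E[\tilde{k}_b]\|=\sup_{\phi}\frac{1}{m}\sum_{i=1}^m\big(g_{b,\phi}(\tau_i)-\E[g_{b,\phi}(\tau_1)]\big)$ with $\phi$ ranging over a countable dense subset of the unit ball of $L^2(I)$ and $g_{b,\phi}(x)=\int_I\frac{\kappa_{t,b}(x)}{1-F(x)}\phi(t)\,\dd t$, decompose $g_{b,\phi}$ into a near part (on a $\lambda$-neighbourhood of $I$) and a far part, and apply Talagrand's concentration inequality for suprema of bounded empirical processes. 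The three required quantities are: the expectation, controlled by $\big(\int_I\mathrm{Var}(\tilde{k}_b(t))\,\dd t\big)^{1/2}=O((mb^\gamma)^{-1/2})$ via Assumptions~\ref{assump:gamma_sup}, \ref{assump_ad_strong} and \ref{assump:intt}; the weak variance $v=\sup_\phi\mathrm{Var}(g_{b,\phi}(\tau_1))=O(b^{-\gamma})$; and the sup-norm $M=\sup_\phi\|g_{b,\phi}\|_\infty$, for which Cauchy--Schwarz and the second bound of Assumption~\ref{assump:intt} give $M^2=O(b^{-\gamma(1+\eta)})$ (the far part being exponentially small by strong compatibility) --- this is where $mb^{\gamma(1+\eta)}\ge1$ is used. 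Choosing $\kappa_2\ge20$ so that $V(b)/10$ absorbs the expectation part, the $\{\cdot\}_+$ integration leaves a ``variance-type'' tail of order $m^{-c}$ and a ``sup-type'' tail bounded by $e^{-c'/\sqrt{b^\gamma}}$; summing over $\mathcal{B}_m$ and invoking Assumption~\eqref{eq:assump_sum_exp} bounds the second by $\frac{A\log m}{m}$, so that $T_3=O(\log m/m)$. Combining Steps~1--4 yields \eqref{eq:oracle_int_2}. The delicate points are the correct definition of the process $g_{b,\phi}$ and the verification that the ``far'' contributions stay negligible uniformly in $\phi$ and in $b\in\mathcal{B}_m$, which is precisely what Assumption~\ref{assump_ad_strong} and the bandwidth constraints \eqref{hyp:bglobal}--\eqref{eq:assump_sum_exp} are designed to ensure.
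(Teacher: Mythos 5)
Your overall strategy is the same as the paper's: the deterministic decomposition in Step~1, the treatment of $T_1$ and $T_2$ by integrating the pointwise bounds of Lemmas~\ref{lemma:event_1}, \ref{lemma:event_2} and \eqref{eq:control_A}, and Talagrand's concentration inequality for the supremum of the empirical process $\|\tilde{k}_b - \E[\tilde{k}_b]\|$ over the unit ball of $L^2(I)$ in Step~4, with the choice of thresholds $L_b\propto b^{-\gamma/2}$ summed via \eqref{eq:assump_sum_exp}.

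There is however a gap in the bound you announce for the weak variance. You state $v=\sup_\phi \mathrm{Var}(g_{b,\phi}(\tau_1))=O(b^{-\gamma})$. The paper's Lemma~\ref{lemma:comput}, Step~2, obtains the much sharper $v=O(1)$ (in your $g$-normalisation), and this sharpness is load-bearing. The trick is a weighted Cauchy--Schwarz: instead of bounding $\bigl(\int_I a(t)\kappa_{t,b}(\tau_1)\,\mathrm dt\bigr)^2$ by $\int_I\kappa_{t,b}^2\,\mathrm dt\cdot\|a\|^2=O(b^{-\gamma(1+\eta)})$, one writes $\bigl(\int_I a\,\kappa\bigr)^2\le\bigl(\int_I\kappa\bigr)\bigl(\int_I a^2\kappa\bigr)$ and uses the \emph{first} inequality of Assumption~\ref{assump:intt}, $\int_I\kappa_{t,b}(y)\,\mathrm dt\le R_1$, together with $\E[\kappa_{t,b}(\tau_1)]\le\|k\|_\infty$, to get a $b$-free bound. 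With your $O(b^{-\gamma})$ instead, the variance contribution $\sqrt{2vL_b/m}$ in Talagrand with $L_b\propto b^{-\gamma/2}$ is of order $m^{-1/2}b^{-3\gamma/4}$, which is larger than the target $m^{-1/2}b^{-\gamma/2}=\sqrt{V(b)/10}$, so $M_b\le\sqrt{V(b)/10}$ fails for small $b$; and taking $L_b=O(1)$ to repair this makes $\sum_b e^{-L_b}$ grow like $\mathrm{Card}(\mathcal B_m)\le m$, leaving $T_3=O(1)$ instead of the required $O(\log m/m)$. In short: the near/far decomposition and the bound on $\|g_{b,\phi}\|_\infty$ are fine, but the key refinement you are missing is that Assumption~\ref{assump:intt}(i) must be used in the variance bound (not just the sup bound) to kill the $b$-dependence entirely.
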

\blue{Similarly as what was noted for Theorem \ref{thm:oracle_point}, the first term in  \eqref{eq:oracle_int_2} is the MISE of the estimator, which we can control by Theorem \ref{thm:MISE_k}. The second term is of same order as the square bias, and by definition of $V(b)$, the third term has the same order as the variance of the estimator. The term in $\frac{\log(m)}{m}$ is a penalization term. Thus, under the assumptions of Theorem \ref{thm:MISE_k}, \eqref{eq:oracle_int_2} is of order $b^{2\beta \gamma} + \frac{1}{m b^{\gamma}} + \frac{\log(m)}{m}\mathcal{S}(\mathcal{B}_m)$. Thus, if the bandwidth set contains a bandwidth of order $m^{-\frac{1}{\gamma(2\beta +1)}}$ and if $\mathcal{S}(\mathcal{B}_m)$ is of order at most $\frac{m^{1/(2 \beta + 1)}}{\log(m)}$, then the optimal minimax rate of convergence of $m^{\frac{-2\beta}{2\beta + 1}}$ is achieved by the global adaptive bandwidth choice procedure.}
\begin{remark}
\begin{itemize}
\item Unlike for classical kernels, the non explicit dependence of the kernel in $t$ and $b$ prevents from proving a result on $\R_+$, mainly due to Assumption \ref{assump_ad_strong} where $\inf_{\R_+}B(t) >0$ is in general not true. However, as estimations are conducted on an interval in practice, it is not a major drawback.
\item Assumption \eqref{eq:assump_sum_exp} can be understood as an assumption on the distribution of bandwidths inside the bandwidth set, to ensure that they are not all too close to the upper bound, in which case the sum would be larger than $O(\log(m))$. In \cite{Doumic_2012}, no assumption is made on the sum but the bandwidth set is assumed to be a subset of $\{1/i, i=1,..., \delta m\}$ for some positive $\delta$, which ensures the convergence of the sum, and thus that is stays controlled by $\log(m)$. 
\end{itemize}
\end{remark}

We will now prove the global oracle inequality \ref{thm:oracle_global}. The proof follows partly what is done in \cite{recurrent_event_Bouaziz} and \cite{Doumic_2012}. It is quite similar to the proof of Theorem \ref{thm:oracle_point} for the most part, as the bounds can be uniformly integrated over the segment $I$. Only the concentration inequality differs and allows to get a sharper bound without the $\log(m)$ in factor. In the following, we recall that $||.||$ denotes the $L^2$ norm on $I$. In order to apply Talagrand's inequality, we also recall that for any function $f \in L^2(I)$, and if $\mathcal{B}$ denotes the unit ball in $L^2(I)$ and $\mathcal{A}$ is a dense countable subset of $\mathcal{B}$, we have the following representation of the $L^2$ norm, 
\begin{align*}
    ||f|| &= \sup_{a \in \mathcal{B}}\int_I a(t) f(t) \dd t  = \sup_{a \in \mathcal{A}}\int_I a(t) f(t) \dd t. 
\end{align*}

We begin by proving some bounds that will be useful to apply Talagrand's inequality.
\blue{For ease of notations, we consider in the following  that $b_0 \leq 1$. }

\begin{lemma}\label{lemma:comput}
For any $b \in \mathcal{B}_m$ and $t \in \T$ we denote
\begin{align*}
     \xi_{b}(t) :=\Tilde{k}_{b}(t)- \E[ \tilde{k}_b(t)] =  \sum_{i=1}^m (\zeta_{t,b}(\tau_i) - \E[\zeta_{t,b}(\tau_i)])  \qquad \text{with} \qquad   \zeta_{t,b}(\tau_i) = \frac{1}{m}\frac{\kappa_{t,b}(\tau_i)}{1-F(\tau_i)}.
\end{align*}
Under Assumptions \ref{assump:gamma_sup}, \ref{assump_ad_strong} and \ref{assump:intt}, there exist $C_e,C_v, C_h \geq 0$ such that
    \begin{align*}
    &\E[||\xi_{b}||]  \leq \frac{1}{\sqrt{mb^{\gamma}}}C_e,\\
    &v := m\sup_{a \in \mathcal{A}}Var\Bigg[\int_I a(t) \zeta_{t,b}(\tau_1)   \dd t \Bigg] \leq \frac{1}{m} C_v,\\
    & h :=\sup_{y \in \St}||\zeta_{t,b}(y) - \E[\zeta_{t,b}(\tau_1)] || \leq \frac{C_h}{m\sqrt{b^{\gamma(1+\eta)}}},
\end{align*}
where $\eta$ is defined in Assumption \ref{assump:intt}.
\end{lemma}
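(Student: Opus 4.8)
The plan is to establish the three estimates separately, all of them built on one device: split $\T$ into the $\lambda$-enlargement $I'=[\max(0,T_1-\lambda),T_2+\lambda]$ of $I=[T_1,T_2]$ and its complement. On the bounded set $I'$ the survival function stays away from zero, since $1-F(y)\ge 1-F(T_2+\lambda)\ge e^{-||k||_{\infty}(T_2+\lambda)}>0$; on $\T\setminus I'$ one has $|y-t|>\lambda$ for \emph{every} $t\in I$, so the strong compatibility Assumption~\ref{assump_ad_strong} applies with the uniform constants $G:=\sup_{t\in I}G(t)<\infty$ and $B:=\inf_{t\in I}B(t)>0$ and yields $\kappa_{t,b}(y)\le Ge^{-B/b}(1-F(y))$ there, provided $b$ is below the (uniform) compatibility threshold, which is guaranteed by \eqref{hyp:bglobal}. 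Throughout I will use $f=k(1-F)$, the identity $\E[\zeta_{t,b}(\tau_1)]=\tfrac{1}{m}\E[k(Z_{t,b})]\le\tfrac{1}{m}||k||_{\infty}$, the bound $\alpha_b(t)\le C_s(t)b^{-\gamma}$ from Remark~\ref{rem:alpha_beta_bound}, and the dual formula $\sup_{a\in\mathcal A}\int_I a(t)g(t)\,\dd t=||g||$.

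For $\E[||\xi_b||]$, Jensen's inequality gives $\E[||\xi_b||]\le(\E[||\xi_b||^2])^{1/2}$, and Tonelli plus independence of the $\tau_i$ give $\E[||\xi_b||^2]=\int_I\mathrm{Var}(\tilde{k}_b(t))\,\dd t=\tfrac{1}{m}\int_I\mathrm{Var}\!\big(\kappa_{t,b}(\tau_1)/(1-F(\tau_1))\big)\,\dd t\le\tfrac{1}{m}\int_I\!\int_\St\tfrac{\kappa_{t,b}(y)^2 k(y)}{1-F(y)}\,\dd y\,\dd t$. I would bound the inner integral exactly as in the proof of Proposition~\ref{lemma:var_eq_k} (or of Lemma~\ref{lemma:event_1}): on $\{|y-t|\le\lambda\}$ by $\tfrac{||k||_{\infty}}{1-F(t+\lambda)}\alpha_b(t)\le ||k||_{\infty}e^{||k||_{\infty}(t+\lambda)}C_s(t)b^{-\gamma}$, and on $\{|y-t|>\lambda\}$, by writing $\tfrac{\kappa_{t,b}(y)^2k(y)}{1-F(y)}=\big(\tfrac{\kappa_{t,b}(y)}{1-F(y)}\big)^2 f(y)\le G(t)^2 f(y)$ and integrating, by $G(t)^2$. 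Hence $\E[||\xi_b||^2]\le\tfrac{1}{mb^{\gamma}}\int_I\big(G(t)^2+||k||_{\infty}C_s(t)e^{||k||_{\infty}(t+\lambda)}\big)\,\dd t$ for $b\le1$, which is the claimed bound $C_e/\sqrt{mb^\gamma}$.

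For $v$, using $\zeta_{t,b}(\tau_1)=\tfrac{1}{m}\kappa_{t,b}(\tau_1)/(1-F(\tau_1))$ one gets $v\le\tfrac{1}{m}\sup_{a\in\mathcal A}\int_\St\tfrac{(\int_I a(t)\kappa_{t,b}(y)\,\dd t)^2}{1-F(y)}\,k(y)\,\dd y$. On $\T\setminus I'$, since $\kappa_{t,b}(y)\le Ge^{-B/b}(1-F(y))$ for all $t\in I$, one has $(\int_I a(t)\kappa_{t,b}(y)\,\dd t)^2\le|I|G^2 e^{-2B/b}(1-F(y))^2$, so that piece of the integral is at most $|I|G^2 e^{-2B/b}\int_\T f\le|I|G^2$. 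On $I'$, Cauchy--Schwarz with weight $\kappa_{t,b}(y)$ together with the first bound of Assumption~\ref{assump:intt} gives $(\int_I a(t)\kappa_{t,b}(y)\,\dd t)^2\le R_1\int_I a(t)^2\kappa_{t,b}(y)\,\dd t$; bounding $1/(1-F(y))\le e^{||k||_{\infty}(T_2+\lambda)}$ and $k\le||k||_{\infty}$, then using Fubini, $\int_{I'}\kappa_{t,b}(y)\,\dd y\le1$ and $||a||\le1$, that piece becomes a constant multiple of $R_1$. Adding the two contributions gives $v\le C_v/m$ with $C_v$ free of $b$. This is the step I expect to be the main obstacle: eliminating every $b^{-\gamma}$ forces one to use the \emph{linear} control $\int_I\kappa_{t,b}(y)\,\dd t\le R_1$ (the quadratic control $\int_I\kappa_{t,b}(y)^2\,\dd t\le R_2 b^{-\gamma(1+\eta)}$ would only give $v\lesssim b^{-\gamma(1+\eta)}/m$) on the bounded window, and to absorb the spare factor $1-F(y)$ against the exponential gain $e^{-B/b}$ of strong compatibility on the unbounded part --- precisely the two features that are automatic for classical symmetric kernels.

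For $h$, write $h=\sup_{y\in\St}||\,\zeta_{\cdot,b}(y)-\E[\zeta_{\cdot,b}(\tau_1)]\,||\le\sup_{y\in\St}||\zeta_{\cdot,b}(y)||+||\E[\zeta_{\cdot,b}(\tau_1)]||$; the second term is $\le||k||_{\infty}\sqrt{|I|}/m$. For the first, $||\zeta_{\cdot,b}(y)||^2=\tfrac{1}{m^2(1-F(y))^2}\int_I\kappa_{t,b}(y)^2\,\dd t$: on $I'$, use $1/(1-F(y))^2\le e^{2||k||_{\infty}(T_2+\lambda)}$ and the second bound of Assumption~\ref{assump:intt}, $\int_I\kappa_{t,b}(y)^2\,\dd t\le R_2 b^{-\gamma(1+\eta)}$; on $\T\setminus I'$, $\kappa_{t,b}(y)/(1-F(y))\le Ge^{-B/b}$ gives $\int_I\kappa_{t,b}(y)^2\,\dd t/(1-F(y))^2\le|I|G^2 e^{-2B/b}$, which is bounded. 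As $b^{\gamma(1+\eta)}\le1$, each term is at most $C_h/(m\sqrt{b^{\gamma(1+\eta)}})$. This last bound, and the one on $\E[||\xi_b||]$, are essentially bookkeeping once the two-region split is fixed.
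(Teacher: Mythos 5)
Your proof is correct and takes essentially the same approach as the paper: Jensen plus independence for $\E[||\xi_b||]$, weighted Cauchy--Schwarz together with the linear bound $\int_I\kappa_{t,b}(y)\,\dd t\le R_1$ for $v$, and the quadratic bound $\int_I\kappa_{t,b}(y)^2\,\dd t\le R_2\,b^{-\gamma(1+\eta)}$ for $h$, with the unbounded tail controlled through strong compatibility. The only cosmetic difference is that you split the $y$-integral into $I'$ and its complement, whereas the paper bounds $\frac{\kappa_{t,b}(y)}{1-F(y)}\le G(t)+\frac{\kappa_{t,b}(y)}{1-F(T_2+\lambda)}$ pointwise in $t$ and pulls the resulting $t$-integral out as an almost-sure constant before taking expectation.
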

\begin{proof}
\textbf{Step 1} By applying Jensen's inequality, using the independence of the random variables $(\zeta_{t,b}(\tau_i))$  we have 
\begin{align*}
     \E[||\xi_{b}||]
     & \leq \Big( \int_I \sum_{i=1}^m \E\big[ (\zeta_{t,b}(\tau_i) - \E[\zeta_{t,b}(\tau_i)])^2  \big] \dd t \Big)^{1/2}\\
     &\leq \sqrt{m}\Big(\int_I \E[\zeta_{t,b}(\tau_1)^2] \dd t \Big)^{1/2} = \frac{1}{\sqrt{m}} \Big(\int_I \int_{\mathbb S} \frac{\kappa_{t,b}(y)^2}{(1-F(y))^2}f(y)  \dd y \dd t \Big)^{1/2}.
\end{align*}
By Assumption \ref{assump_ad_strong} and Remark \ref{rem:alpha_beta_bound}, we obtain
\begin{align*}
      \E[||\xi_{b}||]  & \leq \frac{1}{\sqrt{m}} \Big(\int_I G(t)^2 + \frac{\kinfI C_s(t)}{b^{\gamma}(1-F(t+\lambda))} \dd t \Big)^{1/2} \\
      &\leq \frac{1}{\sqrt{mb^{\gamma}}} \Big(\int_I G(t)^2 + \frac{\kinfI C_s(t)}{(1-F(t+\lambda))} \dd t \Big)^{1/2} :=  \frac{1}{\sqrt{mb^{\gamma}}}C_e,
\end{align*}
where we used the fact that $b^{\gamma} \leq 1$.\\
\textbf{Step 2}  By the Cauchy Schwarz inequality and Assumption \ref{assump_ad_strong}, we have since $I = [T_1, T_2]$:
    \begin{align*}
    v &\leq \frac{1}{m} \sup_{a \in \mathcal{A}} \E\Big[\int_I \frac{\kappa_{t,b}(\tau_1)}{1-F(\tau_1)}\dd t \int_I a(t)^2\frac{\kappa_{t,b}(\tau_1)}{1-F(\tau_1)}\dd t\Big]\\
    & \leq \frac{1}{m}\sup_{a \in \mathcal{A}} \E\Big[\Big(\int_IG(t) \dd t + \frac{1}{1-F(T_2+\lambda)} \int_I \kappa_{t,b}(\tau_1) \dd t  \Big) \int_I a(t)^2\frac{\kappa_{t,b}(\tau_1)}{1-F(\tau_1)}\dd t\Big]
    \end{align*}
Furthermore,  $\int_I \kappa_{t,b}(\tau_1) \dd t \leq R_1$ by Assumption \ref{assump:intt},  and thus
    \begin{align*}
   v  & \leq \frac{1}{m} \Big(\int_I G(t) \dd t + \frac{1}{1-F(T_2+\lambda)} R_1 \Big)  \sup_{a \in \mathcal{A}} \E\Big[\int_I a(t)^2\frac{\kappa_{t,b}(\tau_1)}{1-F(\tau_1)}\dd t\Big]\\
    & \leq \frac{1}{m} \Big(\int_I G(t) \dd t + \frac{1}{1-F(T_2+\lambda)} R_1\Big)  \sup_{t\in I} \E\Big[\frac{\kappa_{t,b}(\tau_1)}{1-F(\tau_1)}\Big]\\
    & \leq \frac{1}{m} \Big(\int_I G(t) \dd t + \frac{1}{1-F(T_2+\lambda)} R_1\Big) (\sup_{t\in I} (G(t)) + \kinfI) := \frac{1}{m} C_v.
\end{align*}
\textbf{Step 3} We have 
\begin{align*}
    h &= \sup_{y \in \St}||\zeta_{t,b}(y) - \E[\zeta_{t,b}(\tau_1)] || \leq \sup_{y \in \St}||\zeta_{t,b}(y)|| + ||\E[\zeta_{t,b}(\tau_1)] ||.
\end{align*}
By step 1, $||\E[\zeta_{t,b}(\tau_1)] || \leq \frac{C}{m\sqrt{b^\gamma}}$. Finally,  Assumptions \ref{assump_ad_strong} and \ref{assump:intt} yield that,
\begin{align*}
   h & \leq  \frac{1}{m}\sup_{y \in \St}\Big(\int_I G(t)^2 \dd t + \frac{1}{(1-F(T_2 + \lambda))^2}\int_I \kappa_{t,b}^2(y) \dd t\Big)^{1/2} + \frac{C}{m\sqrt{b^\gamma}}\\ 
    &  \leq \frac{1}{m} \Big(\int_I G(t)^2 \dd t + \frac{1}{b^{\gamma(1+\eta)}(1-F(T_2 + \lambda))^2} R_2 \Big)^{1/2} +  \frac{C}{m\sqrt{b^\gamma}}\\
    & \leq \frac{C_h}{m\sqrt{b^{\gamma(1+\eta)}}},
\end{align*}
 with $R_2$ and $\eta$ such that $\int_I \kappa_{t,b}(t)^2 \dd t \leq R_2 b^{-\gamma(1+\eta)}$.
\end{proof}

We now move on to proof of the global oracle inequality. 

\begin{proof}[Proof of Theorem \ref{thm:oracle_global}]
    We proceed similarly as in the proof of Theorem \ref{thm:oracle_point}. 
   We have 
    \begin{align*}
    ||\Check{k} - k||^2 \leq 6A(b) + 6V(b) + 3||\hat{k}_b - k||^2
\end{align*}
and
    \begin{align*}
    \E[A(b)]  \leq &10\E\Big[\sup_{b' \in \mathcal{B}_m}\Big \{ ||\tilde{k}_{b'} - \E[ \tilde{k}_{b'}]||^2 - V(b')/10 \Big \}_+\Big] + 10\E[\sup_{b' \in \mathcal{B}_m}||\tilde{k}_{b'} - \hat{k}_{b'}||^2] \\
    &+ 5\sup_{b' \in \mathcal{B}_m}\Big \{ ||\E[ \tilde{k}_{b'}] -\E[ \tilde{k}_{b' \vee b}]||^2 \Big \}.
\end{align*}

\blue{ For $U_1 :=  10\E[\sup_{b' \in \mathcal{B}_m}||\tilde{k}_{b'} - \hat{k}_{b'}||^2]$ and $U_2 :=  5\sup_{b' \in \mathcal{B}_m}\Big \{ ||\E[ \tilde{k}_{b'}] -\E[ \tilde{k}_{b' \vee b}]||^2 \Big \} $, we have by Jensen and Fubini, 
\begin{align*}
   U_1 \leq  10\E[\sup_{b' \in \mathcal{B}_m}\int_I(\tilde{k}_{b'}(t) - \hat{k}_{b'}(t))^2 \dd t]\leq 10 \int_I \E[\sup_{b' \in \mathcal{B}_m}(\tilde{k}_{b'}(t) - \hat{k}_{b'}(t))^2 ] \dd t = 10\int_I T_1 \dd t.
\end{align*}
And a similar result holds for $U_2$.As the constants involved in \eqref{eq:T1} and \eqref{eq:T2} are locally bounded in $t$ by Lemmas \ref{lemma:event_1} and \ref{lemma:event_2}, we can integrate \eqref{eq:T1} and \eqref{eq:T2}, which yields
\begin{align*}
    &U_1 \leq \frac{\log(m)}{m} (C + C' \mathcal{S}(\mathcal{B}_m) )\qquad \text{ and } \qquad 
   U_2 \leq C b^{2\beta\gamma} .
\end{align*}}

Let us now turn to $U_3 := 10\E\Big[\sup_{b' \in \mathcal{B}_m}\Big \{ ||\tilde{k}_{b'} - \E[ \tilde{k}_{b'}]||^2 - V(b')/10 \Big \}_+\Big] = 10\E\Big[\sup_{b' \in \mathcal{B}_m}\Big \{ ||\xi_{b'}||^2 - V(b')/10 \Big \}_+\Big]$,  using the notations introduced in Lemma \ref{lemma:comput}. We will use a similar proof strategy as what is used in Lemma 1 in \cite{Doumic_2012}. We start by noticing that for all $M_b>0$,
\begin{align}
\label{eq:U3}
    \E[\sup_{b\in \mathcal{B}_m} \!\{||\xi_b||^2\! - \!M^2_b\}_+]\! &\leq\! \sum_{b \in \mathcal{B}_m} \int_{\R_+}\mathbb{P}(||\xi_b||\! \geq \!\sqrt{ M^2_b \!+ \!y} ) \dd y  \leq \sum_{b \in \mathcal{B}_m} \int_{\R_+}\mathbb{P}(||\xi_b||\! \geq \!\frac{1}{\sqrt{2}}(M_b\! +\! \sqrt{y} )) \dd y.
\end{align}
Using the same notations as in Lemma \ref{lemma:comput},
\begin{align*}
    ||\xi_{b}|| &= \sup_{a \in \mathcal{A}}\int_I a(t) \xi_{b}(t) \dd t  = \sup_{a \in \mathcal{A}}\sum_{i=1}^m \int_I a(t) (\zeta_{t,b}(\tau_i) - \E[\zeta_{t,b}(\tau_i)])  \dd t.
\end{align*}
This expression of the $L^2$ norm allows us to apply Talagrand's inequality (see \cite{Massart2007} p.170). For all $\epsilon, x >0$, 
\begin{equation*}
    \mathbb{P}(||\xi_{b}|| \geq (1 + \epsilon) \E[||\xi_{b}||] + \sqrt{2vx} + c(\epsilon)hx) \leq e^{-x},
\end{equation*}
where $c(\epsilon) = \frac{1}{3} + \epsilon^{-1}$ and $v$ and $h$ are defined in Lemma \ref{lemma:comput}.
The bounds for $\E[||\xi_b||], v, h$  obtained in Lemma \ref{lemma:comput} yield that 
\begin{equation*}
     \mathbb{P}\Big(||\xi_{b}|| \geq (1 + \epsilon)\frac{C_e}{\sqrt{mb^{\gamma}}} +\frac{\sqrt{2C_vx}}{\sqrt{m}} + c(\epsilon)\frac{C_h}{m\sqrt{b^{\gamma(1+\eta)}}}x\Big) \leq e^{-x}.
\end{equation*}
Furthermore, for some $L_b >0$ to be determined later, and by setting $x = u + L_b$, the previous inequality can be rewritten as
\begin{equation}\label{eq:talagrandbis}
     \mathbb{P}\Big(||\xi_{b}|| \geq C_b +\frac{\sqrt{2C_vu}}{\sqrt{m}} + c(\epsilon)\frac{C_hu}{m\sqrt{b^{\gamma(1+\eta)}}}\Big) \leq e^{-u}e^{-L_b}, 
\end{equation}
with  $C_b =(1 + \epsilon)\frac{C_e}{\sqrt{mb^{\gamma}}} +\frac{\sqrt{2C_vL_b} }{\sqrt{m}} + c(\epsilon)\frac{C_h}{m\sqrt{b^{\gamma(1+\eta)}}}L_b$.\\
Taking $M_b=\sqrt{2}C_b$ in \eqref{eq:U3}, and using the change of variables $y = 2(\frac{\sqrt{2C_v u} }{\sqrt{m}} + c(\epsilon)\frac{C_h}{m\sqrt{b^{\gamma(1+\eta)}}}u)^2$, we obtain by \eqref{eq:talagrandbis} that there is a constant $C_{\epsilon}$ such that:
\begin{align*}
     \E[\sup_{b\in \mathcal{B}_m} \{||\xi_b||^2 - M_b^2\}_+]  &\leq \sum_{b \in \mathcal{B}_m} \int_0^{+\infty} e^{-L_b - u}4\Big(\frac{\sqrt{2C_v u} }{\sqrt{m}} + c(\epsilon)\frac{C_h}{m\sqrt{b^{\gamma(1+\eta)}}}u\Big)\times \Big(\frac{\sqrt{C_v} }{\sqrt{2mu}} + c(\epsilon)\frac{C_h}{m\sqrt{b^{\gamma(1+\eta)}}}\Big) \dd u \\
     & \leq \sum_{b \in \mathcal{B}_m} e^{-L_b} \int_0^{+\infty}  e^{- u} u^{-1}4\Big(\frac{\sqrt{2C_v u} }{\sqrt{m}} + c(\epsilon)\frac{C_h}{m\sqrt{b^{\gamma(1+\eta)}}}u\Big)^2 \dd u \\
     &\leq C_{\epsilon} \sum_{b \in \mathcal{B}_m} e^{-L_b} (2m^{-1} C_v  + C_h^2 m^{-2}b^{-(1+\eta)\gamma}),
\end{align*}
where we have used the inequality $(a+b)^2 \leq 2a^2 + 2b^2$. 

For $\theta >0$, we set 
\begin{equation*}
    L_b = \frac{C_e^2\theta^2}{2C_v\sqrt{b^{\gamma}}} = \frac{C_{L,\theta}}{\sqrt{b^{\gamma}}}.
\end{equation*}
By Assumption \eqref{hyp:bglobal},  $mb^{\gamma(1+\eta)}\geq 1 $, and combining this with Assumption \eqref{eq:assump_sum_exp}, we obtain that  
\begin{align*}
     \E[\sup_{b\in \mathcal{B}_m} \{||\xi_b||^2 - M_b^2\}_+]  & \leq \frac{C_{\epsilon}}{m} \sum_{b\in \mathcal{B}_m} e^{-\frac{C_{L,\theta}}{\sqrt{b^{ \gamma}}}} (2C_v + C_h^2 \frac{1}{mb^{(1+\eta)\gamma}})\\
     & \leq (2C_v + C_h^2) \frac{C_{\epsilon}}{m} \sum_{b\in \mathcal{B}_m} e^{-\frac{C_{L,\theta}}{\sqrt{b^{\gamma}}}}  \leq  \frac{C\log(m)}{m}.
\end{align*}
Furthermore, 
\begin{align*}
    M_b  &= \sqrt{2}\Big((1+\epsilon)\frac{C_e}{\sqrt{mb^{\gamma}}} + \frac{C_e \theta }{b^{\gamma/4}\sqrt{m}} + c(\epsilon) \frac{C_h \theta^2C_e^2}{2C_v m \sqrt{b^{\gamma}} \sqrt{b^{(1+\eta)\gamma}}} \Big)\\
    & \leq\frac{\sqrt{2}C_e}{\sqrt{mb^{\gamma}}}  \Big(1+ \epsilon + b^{\gamma/4}\theta+ c(\epsilon)\frac{C_h \theta^2C_e}{2C_v\sqrt{mb^{(1+\eta)\gamma}}}\Big).
\end{align*}

Recall that $mb^{\gamma(1+\eta)} \geq 1 $ and $b^{\gamma} \leq 1$. Hence,  for $\theta$ and $\epsilon$  small enough
we have $M_b \leq  \frac{\sqrt{2}C_e}{\sqrt{mb^{\gamma}}} \sqrt{\frac{\kappa_2}{20}} \leq  \sqrt{V(b)/10}$, \blue{as $V(b) \geq \kappa_2 \frac{C_e^2}{mb^{\gamma}}$ and since $\kappa_2 > 20$. }
Finally, this leads to 
\begin{align*}
    &U_3 = 10 \E[\sup_{b\in \mathcal{B}_m} \{||\xi_b||^2 -V(b)/10\}_+] \leq 10 \E[\sup_{b\in \mathcal{B}_m} \{||\xi_b||^2 - M_b^2\}_+] \leq 10 C\log(m)m^{-1}, 
\end{align*}
which achieves the proof. 

\end{proof}

\section{Hazard rate estimation with the Gamma kernel}
\label{sec:num_simul}
We now present some numerical illustrations of the previous results. The associated kernel we consider is the Gamma kernel without interior bias first introduced in \cite{Gamma_kernel} and defined in \eqref{eq:Gamma_kernel_1}, \eqref{eq:rho_gamma_1}. As mentioned in Section \ref{sec:the_frame}, this kernel is particularly adapted to estimating data on a support bounded by one end (for example, the positive real line). It is notably very asymmetric at the end of the support, thus improving the boundary bias, especially for hazard rates that do not vanish near 0.
As per Propositions \ref{prop:gamma}, \ref{prop:gamma_2} and \ref{prop:gamma_2_global}, the Gamma kernel verifies the assumptions needed for our results with $\gamma = 1/2$. In this section, we provide several illustrations of hazard rate estimations with the Gamma kernel, using different bandwidth choice methods, and compare the results with other kernel estimators. We also provide an example on real data. \blue{ Note that the assumptions on the kernel can be checked numerically instead of analytically for practical purposes. Definition \ref{def:gamma} for example, can be verified by computing the moments for several values of the bandwidth and comparing their behavior to the bandwidth.}

All of the code used to generate the results and figures of this section is available at \url{https://github.com/luce-breuil/non_param_estim_assoc}. The simulated data is generated using the package IBMPopSim \cite{IBMPopSim}.

\subsection{Estimation on simulated data}

Although the theoretical expressions of $V_0$ \eqref{eq:v_0} and $V$ \eqref{eq:v} are $t$-dependent and involve a lot of constants, we will use a much simpler expression in the numerical implementation, similarly to what is done in \cite{recurrent_event_Bouaziz}, by only keeping the same asymptotic order as the theoretical results:
\begin{align*}
    V_0(b) = \frac{\kappa_0 \log(m) \kinf}{m b^{1/2}} \text{ and } \blue{V(b) = \frac{\kappa_2 \kinfI}{m b^{1/2}}}.
\end{align*}
The constants are taken here as $\kappa_0 = 0.03$, $\kappa_2 = 20$ and $\kinf$ and $\kinfI$ are estimated by taking the maximum of the estimated hazard rate \blue{with the smallest bandwidth in the bandwidth set. As mentioned in \cite{recurrent_event_Bouaziz}, estimators of $\kinf$ and $\kinfI$ could be directly considered in the proofs, which would result in a similar inequality as the error done when estimating these quantities is of order $b^{2\gamma\beta} + b^{-\gamma}m^{-1}$.}\\
The choice of an optimal constant is one of the difficulties linked to the implementation of adaptive estimators (see e.g. the discussion in \cite{LACOUR20163774}). Since the theoretical framework of associated kernels involves additional approximations in the upper bounds compared to the classical case, it is not surprising that the resulting constants are also suboptimal, and depend on how sharp the inequalities in the assumptions on the kernels are.
\blue{The constants were chosen by tuning the bandwidth choice method on simulated data from several different hazard rates. In particular, the chosen values of $\kappa_0$ and $\kappa_2$ are much smaller than the theoretical values, and the dependence in $t$ of the variance term is removed. Without doing so, the variance term would systematically impose the choice of largest bandwidth, most particularly for large $t$. Given the broad framework of our study (e.g. the order of the dependence in $b$ of $\kappa_{t,b}$ is asymptotic and not exact as it is for classical kernels), the theoretical bound of $V_0$ and $V$ is unsurprisingly suboptimal. In particular, the exponential dependence in $t$ comes from relatively rough upper bounds and removing it allows $V_0$ and $V$ to stay close to the known theoretical order of the variance. The values chosen result in a bandwidth choice procedure that does not systematically choose the over-smoothing estimator. Note that the same is done in \cite{recurrent_event_Bouaziz}.} \blue{We also follow a strategy similar to that of \cite{LACOUR20163774}, who established a minimal penalty phenomenon for this method for density estimation with standard kernels. We  decreased each constant until the selected bandwidth was systematically the smallest value of $\mathcal{B}_m$ (minimization of  the bias only), and chose a value above this empirical critical point.}
Note that  the empirical penalization functions $V$ and $V_0$ are not directly proportional to the variance of the estimator and are only asymptotically of the same order.\\
The bandwidth sets we consider are as follows: 
\begin{align*}
    &\text{Local } \mathcal{B}_m=\{400(\log(m)/m)^2 \} \cup \{i \log(m)^2/m, 1\leq i \leq 10\log(m), (i-1)\equiv 0 [4]\} \\
    & \qquad \qquad \qquad  \cap \{ b^{1/2} \leq \min(1, 6/\log(m)) \}\\
     &\text{Global } \mathcal{B}_m = \{ i/m^{2/3}, 1 \leq i \leq 10m^{1/2}, i \equiv 0 [10]  \} \cap \{ b^{1/2} \leq \min(1, 6/\log(m))\}.
\end{align*}
\blue{The local bandwidth set is of size at most $10\log(m) + 1$, ensuring that it has size smaller than $m$ and the bandwidths in that set are such that $\min(\frac{\log(m)}{\sqrt{m}}, 20 \frac{\log(m)}{m})\leq b^{1/2} \leq\min(1, \frac{6}{\log(m)} ,\sqrt{10} \frac{\log(m)^3/2}{\sqrt{m}})$. As we tested values of $m \geq 20$, $\gamma = 1/2$ and $b_0 = 1$ in the case of the Gamma kernel (see the proof of Proposition \ref{prop:gamma}), this ensures that $\mathcal{B}_m$ verifies the theoretical hypothesis, up to approximations on the values of $\kappa_1$ and $B(t)$, for which we used a respectively smaller and larger value to allow for a larger range of bandwidths in our set. The set is such that $S(\mathcal{B}_m) = O(1)$. \\
Similarly, the global bandwidth set is of size at most $10m^{1/2}$ and the bandwidths inside verify $\frac{1}{m^{1/3}} \leq b^{1/2} \leq \min(1, \frac{6}{\log(m)}) $. Thus, the bandwidths inside also verify the theoretical assumptions, once again for a larger value of $B$ than the one that we were able to prove analytically. It is such that $\sum_{b\in \mathcal{B}_m} e^{-C/b^{1/4}} \leq me^{-10 Cm^{24}} \leq A \log(m)$. The size of the bandwidth set being one of the main causes for duration of computations, a good way to improve computation time if it is a limitation would be, for example, to restrict the bandwidth set to a geometric progression. }

\paragraph{Comparison of estimators and kernels.} 
We begin by comparing different kernels and estimators on several  hazard rates 
for a sample size of $2000$ observations.
Figure \ref{fig:gamma_ker_comp} shows a comparison of the kernel estimation with several methods on two different  hazard rates. 
The Gamma kernel estimator is compared with the cross-validation bandwidth Gaussian kernel estimator, the local plugin bandwidth choice with Epanechnikov kernel (as described in \cite{bound_ker_hazard}), and the ratio estimator with lognormal kernel as defined in \cite{Salha02092023}. \blue{For the lognormal estimator, the bandwidth was chosen by testing several values and choosing the minimizer of the empirical MISE.}
Firstly, the Gaussian kernel estimator with cross-validation bandwidth shows estimations which are highly biased and underestimated at 0, where it completely fails to capture the magnitude of the hazard, especially for the example presented on Figure \ref{fig:gamma_ker_comp1_f}.\\
The log-normal estimator does capture the peak near $0$, but is very noisy and does not perform as well in the rest of the support. In particular, the log-normal kernel ratio estimator underestimates the hazard rate after $t = 150$ on both figures. This could be improved by choosing a smaller bandwidth, but it would render the estimation near 0 even noisier than it already is. Additionally, as the definition of this estimator involves the ratio of two estimators and integration, it is not as computationally efficient as the others. 
\blue{The local plugin bandwidth Epanechnikov kernel estimator presents an overall good estimation of the second hazard (Figure \ref{fig:gamma_ker_comp1_l})), but significantly oversmoothes the estimation on Figure \ref{fig:gamma_ker_comp1_f} . } \\
The Gamma kernel seems to provide the overall least biased estimation for both hazard shapes on Figures \ref{fig:gamma_ker_comp1_f} and \ref{fig:gamma_ker_comp1_l}. In particular, the use of the local bandwidth choice on Figure \ref{fig:gamma_ker_comp1_l} allows for a good estimation in spite of the high variations in the hazard rate, whereas the global adaptive bandwidth is most adequate for the first hazard on Figure \ref{fig:gamma_ker_comp1_f} to provide a smoother estimation. Overall, the Gamma kernel allows for an estimation which is unbiased near 0, precise inside the support and relatively smooth at the same time. 
\begin{figure}[h!]
\begin{subfigure}{0.98\textwidth}
    \centering
    \includegraphics[scale=0.26]{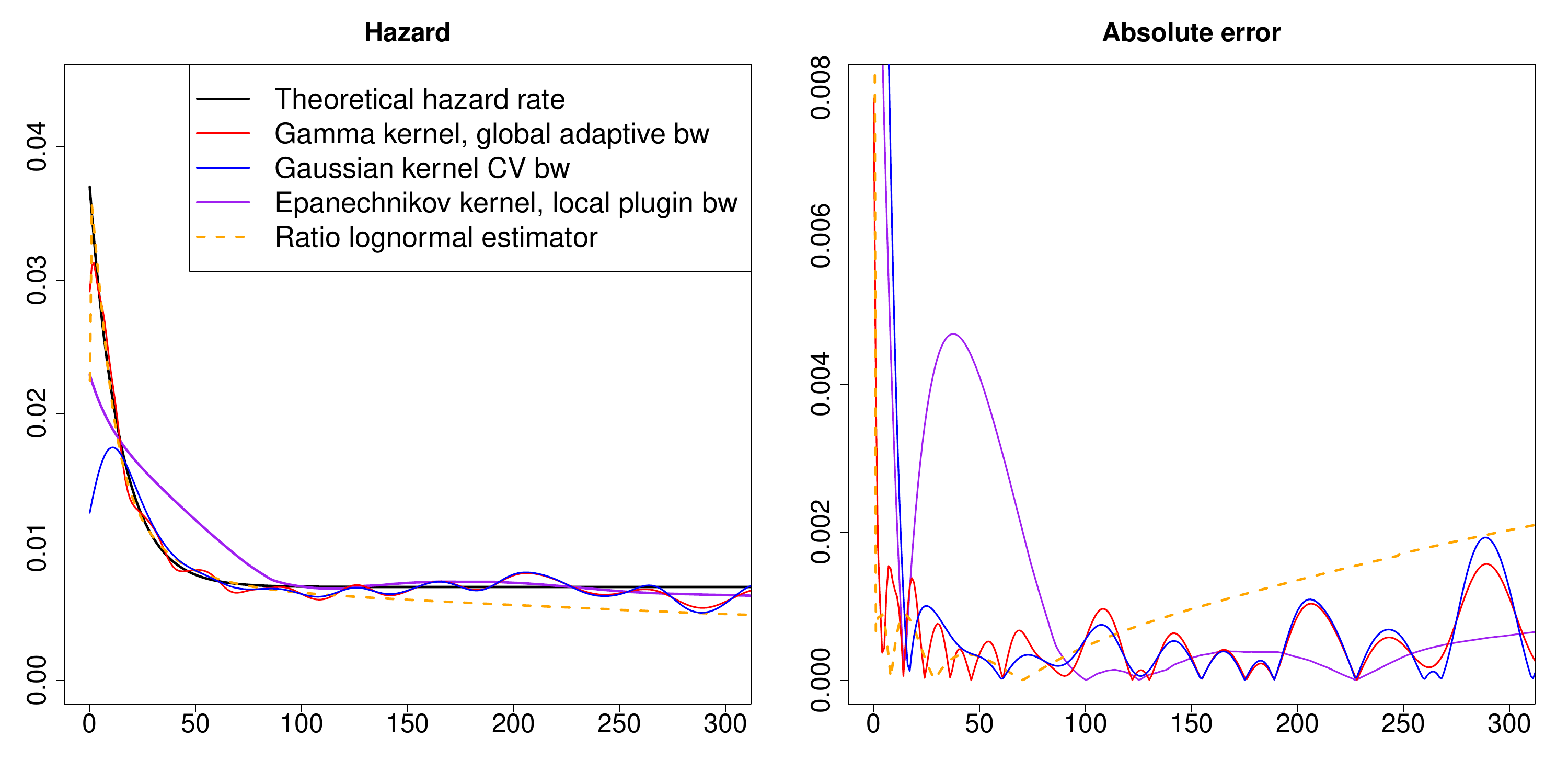}
    \caption{Hazard is  $k(t) = a+c\cdot e^{-dt}$, $a = 7\cdot 10^{-3}, c = 3 \cdot 10^{-2}, d = 7 \cdot 10^{-2}$ Bandwidths - (Gam)adaptive global $0.57$ (Gaus) 10 (LN)$0.5$.}
    \label{fig:gamma_ker_comp1_f}
    \end{subfigure}
    \begin{subfigure}{0.98\textwidth}
    \centering
    \includegraphics[scale=0.26]{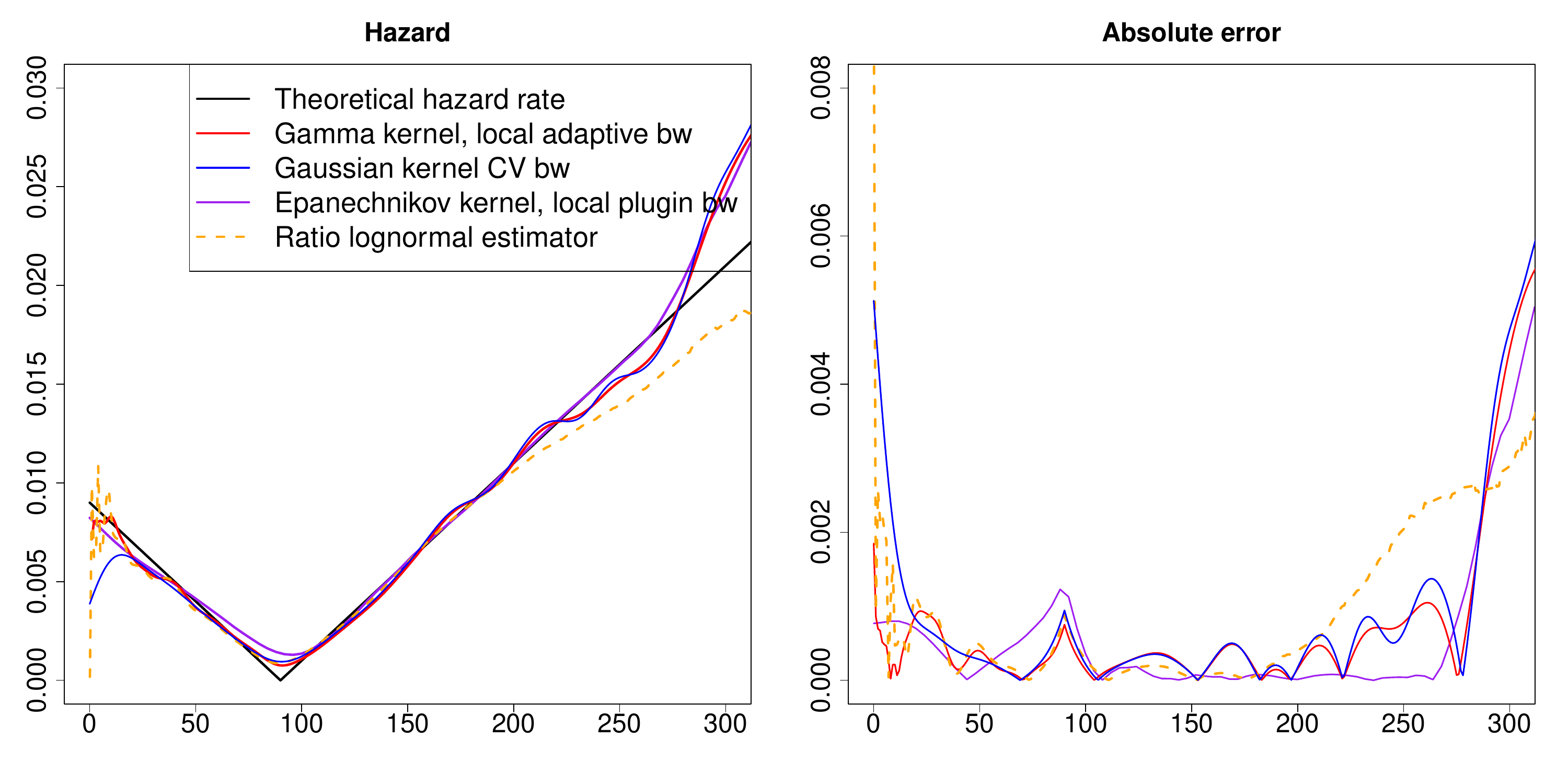}
    \caption{Hazard is $k(t) = b|t/3-30|$ with $b = 3\cdot 10^{-4}$. Bandwidths - (Gam)adaptive local bandwidth (Gaus) $10$ (LN) $0.01$.}
    \label{fig:gamma_ker_comp1_l}
    \end{subfigure}
    
    \caption{\textbf{Comparison of the kernel estimation on two hazard rates}. Estimation methods are Gamma (Gam), Gaussian with cross-validation bandwidth (Gaus), Epanechnikov kernel with local plugin bandwidth and log-normal ratio (LN) for the specified values of bandwidth and a sample size of $2000$.}
    \label{fig:gamma_ker_comp}
\end{figure}

The MISE and MSE at 0 for the hazard of Figure \ref{fig:gamma_ker_comp1_f} are presented in Table \ref{table:MSE_CV_gaussian} for different kernels, and for the Gamma kernel with different bandwidth choice methods in Table \ref{table:MSE_CV_adapt}. 
The error for the estimations with the Gaussian kernel is comparable to the Gamma kernel for the nearest neighbor bandwidth choice (see \cite{k_nearest_choice}) on the entire interval, but 
consistently higher than the Gamma kernel for the cross-validation choice and at $0$ for the nearest neighbor bandwidth. In particular, the error at $0$ for the cross-validation bandwidth and Gaussian kernel does not decrease with an increasing sample size, showing the asymptotically biased nature of the Gaussian kernel estimation. Similarly, the lognormal ratio estimator has a MISE and a MSE at $0$ almost systematically greater than the Gamma kernel estimator. The standard deviation of the MSE at $0$ is high, showing the high instability of the lognormal ratio estimator at $0$. 

 \begin{table}
\centering
\begin{tabular}{ c c c c c c c }
     & \multicolumn{3}{c}{Gaussian kernel} & \makecell{Epanechnikov \\ kernel}&\makecell{Lognormal \\ ratio estimator} & Gamma kernel \\ \hline \hline \noalign{\smallskip}
     &\makecell{CV \\bandwidth}  &\multicolumn{2}{c}{\makecell{Nearest\\ neighbor}} & \makecell{Local \\ Plugin }& Bandwidth $b = 0.5$ & Global adaptive \\ \hline \hline \noalign{\smallskip}
     Size (m) & \makecell{ MISE \\MSE at $0$} &k&\makecell{ MISE \\MSE at $0$}& \makecell{ MISE \\MSE at $0$}& \makecell{ MISE \\MSE at $0$} & \makecell{ MISE \\MSE at $0$} \\ \hline \hline \noalign{\smallskip}
    $500$  & \makecell{$8.30\cdot 10^{-3}$ \\${\scriptstyle (4.78 \cdot 10^{-3})}$\\$8.86\cdot 10^{-4}$\\ ${\scriptstyle (8.92 \cdot 10^{-5})}$}& $25$ & \makecell{$\mathbf{1.55\cdot 10^{-3}}$ \\ ${\scriptstyle (6.91 \cdot 10^{-4})}$\\$3.77 \cdot 10^{-4}$ \\${\scriptstyle (1.19 \cdot 10^{-4})}$} &  \makecell{$4.28\cdot 10^{-3}$ \\ ${\scriptstyle (6.22\cdot 10^{-3})}$\\$2.31\cdot 10^{-4}$ \\${\scriptstyle (7.36 \cdot 10^{-5})}$} &\makecell{$3.09 \cdot 10^{-3}$ \\${\scriptstyle (1.09 \cdot 10^{-3})}$\\$4.82\cdot 10^{-4} $\\ ${\scriptstyle (7.61 \cdot 10^{-4})}$} & \makecell{$3.27\cdot 10^{-3}$ \\${\scriptstyle (2.15 \cdot 10^{-3})}$\\$\mathbf{4.00\cdot 10^{-5}}$\\ ${\scriptstyle (5.13 \cdot 10^{-5})}$} \\\hline \noalign{\smallskip}
    $1000$ & \makecell{$7.52\cdot 10^{-3}$ \\${\scriptstyle (4.04 \cdot 10^{-3})}$\\ $8.72 \cdot 10^{-4}$ \\${\scriptstyle (1.20 \cdot 10^{-4})}$} & $40$& \makecell{$\mathbf{1.04 \cdot 10^{-3}}$  \\${\scriptstyle (3.24 \cdot 10^{-4})}$ \\$3.65 \cdot 10^{-4}$ \\${\scriptstyle (1.19 \cdot 10^{-4})}$}  & \makecell{$2.88 \cdot 10^{-3}$  \\${\scriptstyle (2.05 \cdot 10^{-3})}$ \\$4.93 \cdot 10^{-4}$ \\${\scriptstyle (5.01 \cdot 10^{-5})}$}  & \makecell{$2.74\cdot 10^{-3}$ \\${\scriptstyle (5.76 \cdot 10^{-4})}$\\ $2.37 \cdot 10^{-4}$ \\${\scriptstyle (3.29 \cdot 10^{-4})}$} & \makecell{$1.89\cdot 10^{-3}$ \\${\scriptstyle (1.27 \cdot 10^{-3})}$\\ $\mathbf{2.32 \cdot 10^{-5}}$ \\${\scriptstyle (2.78 \cdot 10^{-5})}$} \\\hline \noalign{\smallskip}
     $2000$  &\makecell{$7.14 \cdot 10^{-3}$ \\${\scriptstyle (3.87\cdot 10^{-3})}$\\$8.82 \cdot 10^{-4} $ \\${\scriptstyle (1.13 \cdot 10^{-4})}$}&$60$&\makecell{$\mathbf{8.20 \cdot 10^{-4}}$  \\${\scriptstyle (2.29 \cdot 10^{-4})}$\\$3.68 \cdot 10^{-4}$  \\${\scriptstyle (6.95 \cdot 10^{-5})}$} &\makecell{$1.88 \cdot 10^{-3}$  \\${\scriptstyle (6.11 \cdot 10^{-4})}$\\$4.93 \cdot 10^{-4}$  \\${\scriptstyle (4.46 \cdot 10^{-5})}$} & \makecell{$2.63\cdot 10^{-3}$ \\${\scriptstyle (3.60 \cdot 10^{-4})}$\\$1.22 \cdot 10^{-4}$\\${\scriptstyle (2.26\cdot 10^{-4})}$}   & \makecell{$1.09\cdot 10^{-3}$ \\${\scriptstyle (6.08 \cdot 10^{-4})}$\\$\mathbf{2.08 \cdot 10^{-5}}$\\${\scriptstyle (2.53\cdot 10^{-5})}$}  \\\hline \noalign{\smallskip}
    $4000$ &\makecell{$6.45\cdot 10^{-3}$ \\${\scriptstyle (2.02 \cdot 10^{-3})}$\\ $8.70 \cdot 10^{-4}$  \\${\scriptstyle (1.07 \cdot 10^{-4})}$}& $80$&\makecell{$7.11\cdot 10^{-4} $ \\${\scriptstyle (1.56 \cdot 10^{-4})}$\\$3.47 \cdot 10^{-4}$  \\${\scriptstyle (5.91 \cdot 10^{-5})}$} &\makecell{$1.05 \cdot 10^{-3} $ \\${\scriptstyle (3.07 \cdot 10^{-4})}$\\$7.10 \cdot 10^{-5}$  \\${\scriptstyle (2.19 \cdot 10^{-5})}$} & \makecell{$2.52\cdot 10^{-3}$\\${\scriptstyle (2.37\cdot 10^{-4})}$\\$6.85\cdot 10^{-5}$ \\${\scriptstyle (9.73 \cdot 10^{-5})}$} & \makecell{$\mathbf{5.35\cdot 10^{-4}}$\\${\scriptstyle ( 2.55\cdot 10^{-4})}$\\$\mathbf{1.23 \cdot 10^{-5}}$ \\${\scriptstyle (1.36 \cdot 10^{-5})}$} \\ \hline \hline
\end{tabular}
\caption{\textbf{Comparison of the MISE and the MSE at 0  for the Gaussian, Epanechnikov and Gamma kernels and lognormal ratio estimators} on the hazard rate $k(t) = a+c\cdot e^{-dt}$ with $a = 7\cdot 10^{-3}, c = 3 \cdot 10^{-2}, d = 7 \cdot 10^{-2}$ on a grid from $0$ to $600$. 
The MISE and MSE are computed with 200 simulations, standard deviation is shown in parenthesis. }
\label{table:MSE_CV_gaussian}
\end{table}

\paragraph{Comparison of bandwidth choice methods}
We now compare bandwidth choice methods for the Gamma kernel estimator on the exponentially decreasing hazard rate of Figure \ref{fig:gamma_ker_comp1_f}. The methods considered will be: local and global adaptive choice as in Sections \ref{sec:local_minimax} and \ref{sec:global_minimax}, cross-validation choice (see \cite{Patil01011994}) and  a variable nearest neighbor bandwidth as was used with the Gaussian kernel on Figure \ref{fig:gamma_ker_comp} (see \cite{Orava2011KnearestNK}). We provide the empirical MISE on the interval $[0,600]$ and the MSE at $0$, computed on 200 simulations for several sample sizes in Table \ref{table:MSE_CV_adapt}.
Although it is the most commonly used, the cross-validation choice of the bandwidth tends to choose an over-smoothing bandwidth which results in an overall reasonable MISE as the estimator performs well between 50 and 600 where the hazard is mainly smooth, but yields a high bias at 0 (see Figure \ref{fig:bw_choice_comp} in the appendix). The nearest neighbor bandwidth choice performs well in terms of integrated error but performs very poorly at 0 for all sample sizes. This is due to its high variations and the fact that it is too data-dependent thus lacking robustness (as seen on Figure \ref{fig:bw_choice_comp}). Furthermore, none of these two methods show a real improvement of the boundary bias with increasing sample size.\\
The local adaptive bandwidth choice performs well for high sample sizes, but its bias at 0 indicates high variations and a tendency to overfit similarly to the nearest neighbor bandwidth choice. However the boundary bias improves for increasing sample sizes. Overall, for this hazard rate, the global adaptive bandwidth choice is the one with the lowest boundary bias and outperforming the other methods for high sample sizes. As this method chooses a bandwidth among a set of bandwidths which are rather small (at least smaller than 1), it works best for relatively high sample sizes for which these bandwidth values are more adapted. Smaller sample sizes might  necessitate larger bandwidth sizes.
This makes the global adaptive bandwidth choice a relevant data-driven way to select a bandwidth which performs better than the most commonly used cross-validation.

\begin{table}[h!]
\centering
\begin{tabular}{ c c c c c c}
     & Global adaptive &CV bandwidth &Local adaptive &\multicolumn{2}{c}{Nearest neighbor}\\ \hline \hline \noalign{\smallskip}
     Size (m) & \makecell{ MISE \\MSE at $0$} & \makecell{ MISE \\MSE at $0$} & \makecell{ MISE \\MSE at $0$}&k&\makecell{ MISE \\MSE at $0$} \\ \hline \hline \noalign{\smallskip}
    $500$  & \makecell{$3.27\cdot 10^{-3}$ \\${\scriptstyle (2.15 \cdot 10^{-3})}$\\$\mathbf{4.00\cdot 10^{-5}}$\\ ${\scriptstyle (5.13 \cdot 10^{-5})}$}&   \makecell{$2.80\cdot 10^{-3}$ \\ ${\scriptstyle (2.37 \cdot 10^{-3})}$\\$2.68 \cdot 10^{-4}$ \\${\scriptstyle (1.92 \cdot 10^{-4})}$} &\makecell{$6.82 \cdot 10^{-3}$ \\${\scriptstyle (1.11 \cdot 10^{-2})}$\\ $5.35 \cdot 10^{-4}$ \\ ${\scriptstyle (9.72 \cdot 10^{-4})}$}&$25$& \makecell{$\mathbf{1.56 \cdot 10^{-3}}$  \\${\scriptstyle (7.87\cdot 10^{-4})}$\\$3.77 \cdot 10^{-4}$  \\${\scriptstyle (1.12 \cdot 10^{-4})}$} \\\hline \noalign{\smallskip}
    $1000$ & \makecell{$1.89\cdot 10^{-3}$ \\${\scriptstyle (1.27 \cdot 10^{-4})}$\\ $\mathbf{2.32 \cdot 10^{-5}}$ \\${\scriptstyle (2.78 \cdot 10^{-5})}$}&  \makecell{$2.05\cdot 10^{-3}$ \\${\scriptstyle (1.08\cdot 10^{-3})}$\\$2.46 \cdot 10^{-4}$ \\${\scriptstyle (1.93 \cdot 10^{-4})}$} &\makecell{$2.82 \cdot 10^{-3}$  \\${\scriptstyle (2.24 \cdot 10^{-3})}$ \\$4.13 \cdot 10^{-4}$ \\${\scriptstyle (6.40 \cdot 10^{-4})}$} &$40$ &\makecell{$\mathbf{1.03 \cdot 10^{-3}}$  \\${\scriptstyle (3.09\cdot 10^{-4})}$\\$3.66 \cdot 10^{-4}$  \\${\scriptstyle (9.56 \cdot 10^{-5})}$}\\\hline \noalign{\smallskip}
     $2000$  & \makecell{$1.09\cdot 10^{-3}$ \\${\scriptstyle (6.08 \cdot 10^{-4})}$\\$\mathbf{2.08 \cdot 10^{-5}}$\\${\scriptstyle (2.53\cdot 10^{-5})}$} &\makecell{$1.71\cdot 10^{-3}$\\${\scriptstyle (1.07 \cdot 10^{-3})}$\\$2.35 \cdot 10^{-4}$ \\${\scriptstyle (1.86 \cdot 10^{-4})}$} &\makecell{$1.62 \cdot 10^{-3}$ \\${\scriptstyle (1.11 \cdot 10^{-3})}$\\$3.24 \cdot 10^{-4} $ \\${\scriptstyle (5.96 \cdot 10^{-4})}$}&$60$&\makecell{$\mathbf{8.11 \cdot 10^{-4}}$  \\${\scriptstyle (1.96 \cdot 10^{-4})}$\\$3.71 \cdot 10^{-4}$  \\${\scriptstyle (7.24 \cdot 10^{-5})}$} \\\hline \noalign{\smallskip}
    $4000$& \makecell{$\mathbf{5.35\cdot 10^{-4}}$\\${\scriptstyle ( 2.55\cdot 10^{-4})}$\\$\mathbf{1.23 \cdot 10^{-5}}$ \\${\scriptstyle (1.36 \cdot 10^{-5})}$}& \makecell{$1.36\cdot 10^{-3}$\\${\scriptstyle (1.05 \cdot 10^{-3})}$\\$2.08 \cdot 10^{-4}$ \\${\scriptstyle (1.79 \cdot 10^{-4})}$}&\makecell{$1.02\cdot 10^{-3}$ \\${\scriptstyle (6.83 \cdot 10^{-4})}$\\ $2.61 \cdot 10^{-4}$  \\${\scriptstyle (3.50 \cdot 10^{-4})}$}& $80$&\makecell{$7.29\cdot 10^{-4}$  \\${\scriptstyle (1.61 \cdot 10^{-4})}$\\$3.59 \cdot 10^{-4}$  \\${\scriptstyle (6.84 \cdot 10^{-5})}$} \\ \hline \hline
\end{tabular}
\caption{\textbf{Comparison of the MISE and MSE at 0 for different bandwidth choice methods with the Gamma kernel} for the hazard rate $k(t) = a+c\cdot e^{-dt}$ with $a = 7\cdot 10^{-3}, c = 3 \cdot 10^{-2}, d = 7 \cdot 10^{-2}$ on a grid from $0$ to $600$. 
The MISE and MSE are computed with 200 simulations, standard deviation is shown in parenthesis.}
\label{table:MSE_CV_adapt}
\end{table}

The effect of the hazard shape on the local bandwidth choice is shown on Figure \ref{fig:oracle_steep}, where the adaptive estimator is shown for different widths of the peaks in the hazard rate. As shown by the bandwidth plots, the chosen bandwidth is small near 0 and 150, especially for the second hazard rate where the peaks are even narrower, thus allowing the estimator to pick up the rapid variations in these regions, while it is much greater on the rest of the interval, allowing for a smooth estimation of the quasi-constant phase of that hazard (the small bandwidths chosen at the end are due to lack of data). This illustrates the relevance of having a local choice of the bandwidth for hazard rates with a lot of variations, whereas Table \ref{table:MSE_CV_adapt} indicates that for somewhat smoother hazard rates, a global bandwidth choice can be better. 
\begin{figure}[h!]
    \centering
    \includegraphics[scale = 0.35]{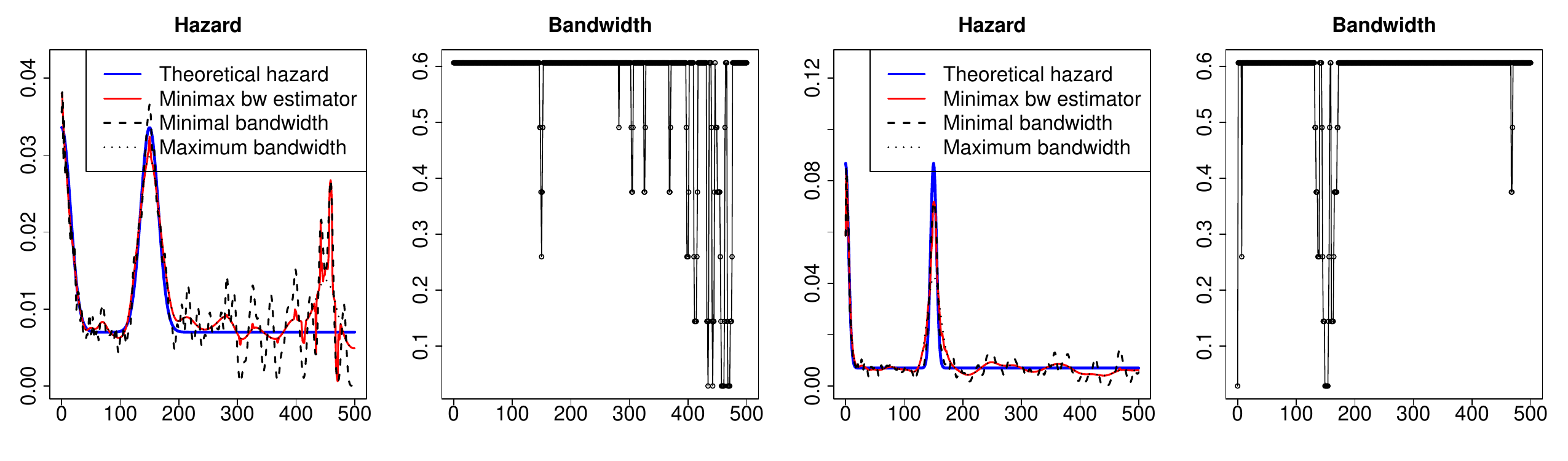}
    \caption{\textbf{Local adaptive bandwidth estimator and close-up of the chosen bandwidth} for $m=2000$, on two hazard rates 
     of the form  $k(t) = a + f_1(t) + f_2(t)$ with $a = 7\cdot 10^{-3}$ and $f_1$ and $f_2$ Gaussian densities of same sd $15$ (left plot) and $5$ (right plot), centered around $0$ and $150$ respectively.}
     \label{fig:oracle_steep}
\end{figure}

\subsection{Test on experimental data} \label{sec:data}
Finally, we test the local adaptive bandwidth choice procedure with the Gamma kernel on experimental data from \cite{tricoire_new_2015}. They study a 2-phases aging model (first introduced in \cite{rera_intestinal_2012}) in drosophila: before dying, all flies first enter a senescent "Smurf" phase at a certain rate $k_S$. This phase change is detected with a blue food die which permeates the entire body through the intestine when flies turn Smurf. Once Smurf, flies die shortly (but not immediately) at a rate $k_D$. The model is summed up on Figure \ref{fig:drawing_model}.

\begin{figure}[!ht]
\centering
\resizebox{0.8\textwidth}{!}{%
\begin{circuitikz}
\tikzstyle{every node}=[font=\normalsize]

\draw [->, >=Stealth] (2.5,9.2) -- (13.25,9.2);
\draw  (2.5,9) -- (2.5,9.2);
\draw  (7.5,8.3) -- (7.5,8.5);
\draw [->, >=Stealth] (7.5,8.5) -- (13.25,8.5);
\draw  (2.5,10.25) rectangle (5,9.75);
\draw  (7.5,10.25) rectangle (9,9.75);
\node [font=\large] at (12.5,10) {\text{dead}};

\node [font=\large] at (3.75,10) {non-Smurf};
\node [font=\large] at (8.25,10) {Smurf};
\node [font=\large] at (6,10.25) {$k_S(t)$};
\node [font=\large] at (10.5,10.25) {$k_D(a)$};
\draw [->, >=Stealth] (5,10) -- (7.5,10);
\draw [->, >=Stealth] (9,10) -- (12,10);
\node [font=\normalsize] at (2.75,8.7) {t = 0};
\node [font=\normalsize] at (7.5,8) {a = 0};
\node [font=\normalsize] at (14,9.2) {\textit{time t }};
\node [font=\normalsize] at (14.25,8.5) {\textit{Smurf age a }};
\end{circuitikz}
}%
\caption{Schematic representation of the two-phased model.}
\label{fig:drawing_model}
\end{figure}

Mathematically speaking, this means that the rate of death for smurf flies, $k_D$, is high for small smurf ages $a$ as the smurf phase is a strong predictor of death. In particular, it is key to correctly estimate $k_D$ near $0$. To that effect, the use of the Gamma kernel, particularly adapted to hazard rates which are non-zero at $0$, along with the adaptive bandwidth procedure is relevant. 
More precisely, the data we estimate the hazard rate on consists in the time spent smurf of 1159 independent drosophila. 

\begin{figure}
    \centering
    \includegraphics[scale = 0.25]{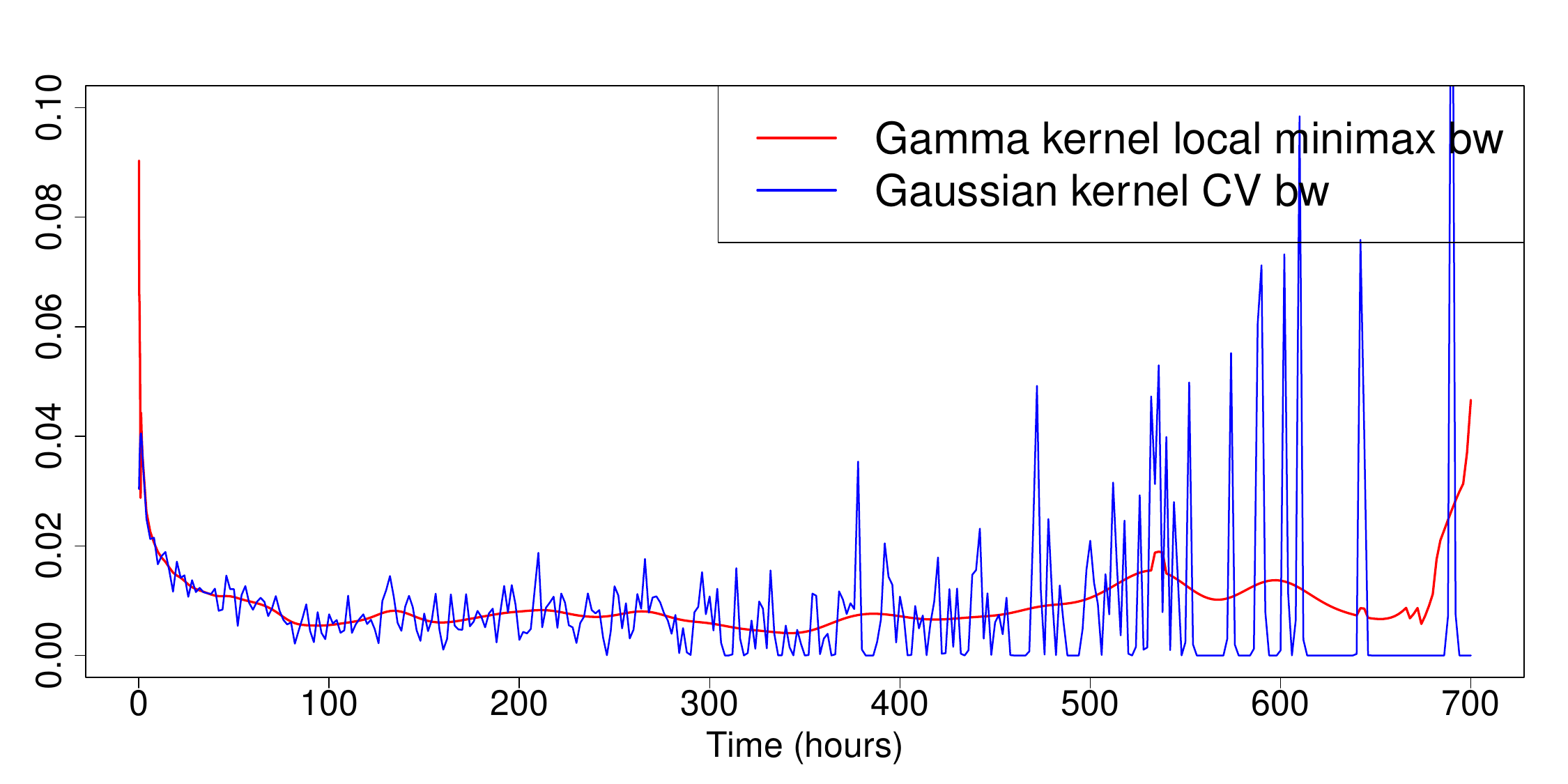}
    \caption{Kernel estimator of the death hazard rate estimation in smurf flies with the Gamma kernel with the adaptive bandwidth procedure and the Gaussian kernel with cross-validation bandwidth.}
    \label{fig:oracle_smurf}
\end{figure}

The result of the estimation is shown on Figure \ref{fig:oracle_smurf}. The adaptive bandwidth choice selects the smallest bandwidth near 0, where the hazard rate decreases drastically and the largest bandwidth is then selected, as the hazard rate is quasi-constant after the 50 first hours. 

In comparison, the Gaussian kernel estimator with cross-validation bandwidth (the chosen bandwidth is $0.8$) both underestimates the initial peak, and overfits the rest of the hazard. The Gamma kernel allows to fully capture the unusual behavior of the death rate which is particularly high at $0$, while still providing a readable estimation of the more constant part of the hazard. 
Quantitatively, the initial peak with the Gaussian kernel estimator is of $0.04$, significantly smaller than the $0.09$ yielded by the Gamma kernel with adaptive bandwidth choice. The use of an associated kernel, namely the Gamma kernel, allows to capture the height of the initial peak, which translates the fact that the smurf phenotype is a strong predictor of death, and that the transition to this phenotype is accompanied by a particularly high chance of death in the first hours. Biologically, it shows that at one key point in their lives, fruit flies undergo a drastic decrease of several health indicators, which is accompanied by an extremely high risk of biological failure and thus of impending death. However, not all flies die immediately after this transition, and some are capable of surviving up to a few weeks, even with such decreased capacities.

\section{Concluding remarks}

In this work, we present convergence results in the general framework of kernel hazard rate estimation with associated kernels. Associated kernels are still an active field of research, but few results exist in all generality, and none in the case of hazard rate estimation. We both prove the convergence of the MISE and asymptotic normality. Furthermore, we provide results on a data-driven adaptive bandwidth selection method. The general formulation of the kernel and its implicit dependence in time, as well as its infinite support, result in several theoretical difficulties which are solved through the introduction of several assumptions, most of them being trivially verified by classical kernels. 
We show that all of our theoretical results apply to the Gamma kernel, and provide several simulations showing both the relevance of the Gamma kernel and of the adaptive bandwidth choice when estimating hazard rates, especially when they are non-0 at 0. We also use the Gamma kernel estimator on experimental data, which shows that using an associated kernel allows to capture the real behavior of the hazard, and thus of the underlying biological mechanisms. 

Our results, along with the assumptions on the kernels on which they rely, provide some guidelines as to which properties a kernel should have to provide good estimations. This could help in designing new kernels, but also highlights the key properties of existing kernels. This article tackles the problem of estimating a hazard from independent observations, further perspectives could include extending it to dependent data. \blue{A very natural extension of this work would be to study an associated kernel
estimator of the hazard rate for censored data, or more generally for the
estimation of counting process intensities in the multiplicative intensity
model, a framework in which standard kernel estimators have been studied (see e.g. \cite{Ramlau_haz}).
We believe that the results of this paper could be extended to this setting,
although additional technical difficulties may arise.} Finally, we  only consider positive kernels, but similar results for non-positive kernels could be studied. 
\section*{Code availability}
The code used in this work can be found at \url{https://github.com/luce-breuil/non_param_estim_assoc}.

\section*{Acknowledgments}
The authors would like to thank Marie Doumic for her contribution in the mathematical formulation of the problem, as well as for her many valuable suggestions and comments on this work. The authors also thank Michaël Rera, who is at the origin of the biological
questions motivating this work, for fruitful discussions and for providing the
data analysed in Section 5.

\appendix
\section{Appendix}
\label{sec:appendix}
\subsection{Verification of the assumptions for specific kernels.}
\label{sec:proof_2.1}
\begin{proof}[Proof of Proposition \ref{prop:gamma}]
    
We first prove that the unbiased Gamma kernel defined by \eqref{eq:Gamma_kernel_1} and \eqref{eq:rho_gamma_1} falls under Definitions \ref{def:cont_ass} and \ref{def:order_beta} for $\beta = 2$ as well as verifies Assumptions \ref{ass:order},\ref{assump:compat}, \ref{assump:unif_int} and \ref{assump_ad_strong} (for a bounded hazard rate). We recall the following equivalent for the Gamma function. 
\begin{equation}
    \Gamma(z) \underset{z \to +\infty}{\sim}\sqrt{2\pi}z^{z-1/2}e^{-z}. \label{eq:equiv_gamma}
\end{equation}
\textbf{Definitions \ref{def:cont_ass} and \ref{def:order_beta}.}
We have $\St = \mathbb{R}_+$.
For $t > 2b$,
\begin{align*}
    \Lambda(t,b) = \rho(t)_b b -t = 0 \text{ and }Var(Z_{t,b}) = \rho(t)_b b^2 = tb \xrightarrow[b \rightarrow 0]{} 0.
\end{align*}  
Furthermore, for $t \leq 2b$, 
\begin{align*}
    \Lambda(t,b) =  \frac{1}{4}\frac{t^2}{b} +b - t \leq 2b  \xrightarrow[b \rightarrow 0]{} 0 \text{ and }
    Var(Z_{t,b}) =  \frac{1}{4}t^2 +b^2 \leq 2b^2 \xrightarrow[b \rightarrow 0]{} 0.
\end{align*} 
By the previous computations,  $\Lambda(t,b) = O(b)$ and  $Var(Z_{t,b}) = O(b)$. Hence, Assumption \ref{assump:gamma_1} holds with $\gamma= \frac{1}{2}$.

\textbf{Assumption \ref{ass:order}.} Let $t>0$. Since we are looking to prove an asymptotic result, we will consider that $b\leq t/2$. We introduce $f_{t,b}(u) = b^{1/2}\kappa_{t,b}(b^{1/2}u+E[Z_{t,b}])$. Since for the Gamma kernel with $b \leq t/2$, $\E[Z_{t,b}] = t$, we have for $\eta >0$,
\begin{equation*}
    b^{-1}\int_{\St \cap \{|y -\E[Z_{t,b}]| > \eta\}} (y - \E[Z_{t,b}])^2 \kappa_{t,b}(y) \dd y = \int_{[-t/\sqrt{b}, +\infty) \cap \{|u| > \eta b^{-1/2}\}}f_{t,b}(u)u^2 \dd u.
\end{equation*}
We have for all $u  \geq-\frac{t}{\sqrt{b}}$ and by \eqref{eq:equiv_gamma}
\begin{align}
\label{eq:dom_f}
   0 < f_{t,b}(u) &=\frac{1}{\sqrt{2\pi t}} \frac{(1+\frac{u\sqrt{b}}{t})^{t/b-1} e^{-\frac{u}{\sqrt{b}}}}{1+o(b)}  \leq C\frac{1}{\sqrt{2\pi t}}\exp\Bigg(\Big(\frac{t}{b}-1\Big)\log\Big(1+\frac{u\sqrt{b}}{t}\Big)-\frac{u}{\sqrt{b}} \Bigg).
\end{align}

Let us first consider $\int_{\{u>\eta b^{-1/2}\}}u^2 f_{t,b}(u) \dd u $. Let us fix $0< \alpha \leq 1$.
If $\eta \geq \sqrt{b}\frac{t-\sqrt{b}}{1 - \sqrt{b}}$, which holds for $b$ small enough, the function $u \mapsto \Big(\frac{t}{b}-1\Big)\log\Big(1+\frac{u\sqrt{b}}{t}\Big)-\frac{u}{\sqrt{b}} + \alpha u$ is decreasing on $[\eta b^{-1/2}, + \infty[$. Combining this with \eqref{eq:dom_f}, we have for  $u \geq \frac{\eta}{\sqrt{b}}$,
\begin{align*}
  \frac{\sqrt{2\pi t}}{C} f_{t,b}(u) e^{\alpha u} 
    &\leq  \exp\Bigg(\Big(\frac{t}{b}-1\Big)\log\Big(1+\frac{\eta}{t}\Big)-\frac{\eta}{b} + \alpha \frac{\eta}{\sqrt{b}}\Bigg)\\
    & \leq  \exp\Bigg(\frac{t}{b}\Big(\log\Big(1+\frac{\eta}{t}\Big) - \frac{\eta}{t}\Big)  + \alpha \frac{\eta}{\sqrt{b}}\Bigg)  < 1,
\end{align*}
for $b$ small enough, since $\log\Big(1+\frac{\eta}{t}\Big) - \frac{\eta}{t} < 0$.
Hence $\one_{\{u>\eta b^{-1/2}\}}u^2 f_{t,b}(u) \leq C u^2  e^{-\alpha u}/\sqrt{2\pi t}$ for $b$ small enough, and hence by the dominated convergence theorem, $\int_{\{u>\eta b^{-1/2}\}}u^2 f_{t,b}(u) \dd u \xrightarrow[b\rightarrow 0]{}0$.

Let us now consider the case $-\frac{t}{\sqrt{b}} \leq u \leq  -\frac{\eta}{\sqrt{b}}$. As previously, we have
\begin{align*}
  u^2f_{t,b}(u) &\leq \frac{C}{\sqrt{2\pi t}} u^2\exp\Bigg(\Big(\frac{t}{b}-1\Big)\log\Big(1+\frac{u\sqrt{b}}{t}\Big)-\frac{u}{\sqrt{b}}\Bigg) \leq \frac{Ct}{\sqrt{2\pi t}b}\exp\Bigg(\Big(\frac{t}{b}-1\Big)\log\Big(1+\frac{u\sqrt{b}}{t}\Big)-\frac{u}{\sqrt{b}}\Bigg). 
\end{align*}

The function $u \mapsto (\frac{t}{b}-1)\log\Big(1+\frac{u\sqrt{b}}{t}\Big)-\frac{u}{\sqrt{b}}$ is increasing on $]-\frac{t}{\sqrt{b}} , -\frac{\eta}{\sqrt{b}}]$ provided $\eta \geq b$, which is the case for $b$ small enough. Hence for $-\frac{t}{\sqrt{b}} \leq u \leq  -\frac{\eta}{\sqrt{b}}$,
\begin{align*}
    &u^2f_{t,b}(u)  \leq \frac{C\sqrt{t}}{\sqrt{2\pi }b}\exp\Bigg(\Big(\frac{t}{b}-1\Big)\log\Big(1-\frac{\eta}{t}\Big)+\frac{\eta}{b}\Bigg) \leq \frac{C\sqrt{t}}{\sqrt{2\pi}b}\exp\Bigg(\frac{t}{b}\Big(\log\Big(1-\frac{\eta}{t}\Big)+\frac{\eta}{t}\Big) \Bigg).
\end{align*}
Hence 
\begin{align*}
    \int_{-\frac{t}{\sqrt{b}}}^{-\frac{\eta}{\sqrt{b}}}u^2f_{t,b}(u) \dd u \leq \frac{C\sqrt{t}(t-\eta)}{b^{3/2}} \exp\Bigg(\frac{t}{b}\Big(\frac{\eta}{t}+\log\Big(1-\frac{\eta}{t}\Big)\Big) \Bigg) \xrightarrow[b\rightarrow 0]{}0.
\end{align*}
Finally, 
\begin{equation*}
    \int_{|u| > \eta b^{-1/2}}u^2 f_{t,b}(u) \dd u  \xrightarrow[b\rightarrow 0]{}0
\end{equation*}
and Assumption \ref{ass:order} is verified for any $t>0$.
For $t = 0$, 
\begin{align*}
    \int_{u\geq \eta b^{-1/2}} u^2 f_{0,b}(u)\dd u  &=  \int_{u\geq \eta b^{-1/2}}\frac{u^2}{\sqrt{b}}e^{-u/\sqrt{b}-1} \dd u = \sqrt{b}\int_{v\geq \eta/b}v^2e^{-v-1} \dd v \xrightarrow[b\to 0]{}0. 
\end{align*}

\textbf{Assumption \ref{assump:gamma_sup}} Let $t>0$, and $b\leq 1$. \blue{As $b \mapsto ||\kappa_{t,b}||_{\infty} $ is continuous in $b$ for $b \in [t/2, 1]$, we have $\sup_{b \in [t/2, 1]} (||\kappa_{t,b}||_{\infty}) < + \infty$. }Thus, it suffices to show the result for $b < t/2$. 
For $b < t/2$, by differentiating w.r.t $y$, we find that for $y \geq 0$, $\kappa_{t,b}'(y) = 0 \iff y = t-b$. And the maximum is achieved at $t-b$. Thus,
\begin{align*}
    \max_{y \in \mathbb{R}_+} \kappa_{t,b}(y) &=  \kappa_{t,b}(t-b) = \frac{(t-b)^{t/b -1} e^{-t/b}}{b^{t/b} \Gamma(t/b)} \substack{\sim\\ b \rightarrow 0} \frac{e^{-1}}{\sqrt{2\pi b (t-b)}} = O(b^{-1/2}).
\end{align*}

Obviously, $\max_{y \in \mathbb{R}_+} \kappa_{0,b}(y) = 1/b$.\\

\textbf{Assumptions \ref{assump:compat} and \ref{assump_ad_strong}} As these assumptions depend both on the kernel and the hazard rate, we will consider the case of a bounded hazard rate (which is a reasonable hypothesis in survival analysis, and one that we make throughout this article). 
    Suppose $b < \min(1, 1/(4||k||_{\infty})$  and $\lambda > 2e$. \\
    \textbf{Case 1} $t>2b$. Note that $\lambda>2e$ implies that for $y \geq 0$ and $t < 2e$, $|t-y| > \lambda \implies y > t+\lambda > 1$. 
 We have for $y \geq 0$,
 \begin{align*}
    \kappa_{t,b}(y)\frac{1}{1-F(y)} &= \frac{y^{{t/b-1}}e^{-y/b}}{b^{t/b} \Gamma(t/b)} e^{\int_0^y k(u) \dd u}\\
    & \leq \frac{y^{{t/b-1}}e^{-y(\frac{1}{b}-||k||_{\infty})}}{b^{t/b}\Gamma(t/b)} \\
    &\leq C \frac{\sqrt{t}}{\sqrt{2b\pi}}\exp(\frac{1}{b}(t(\log(y)-\log(t))-y +t)+||k||_{\infty} y-\log(y))\\
    &\leq 
    \begin{cases}
    &C\frac{\sqrt{t}}{\sqrt{2b\pi}}\exp(\frac{1}{b}(t(\log(y)-\log(t))-\frac{y}{2} +t) -(\frac{1}{2b}-||k||_{\infty}) y) \text{ if $y \geq 1$}\\
     &  C\frac{\sqrt{t}}{\sqrt{2b\pi}}\exp(\frac{t}{b}(1 - \log(t)) +||k||_{\infty}) \text{ else }
    \end{cases}.
\end{align*}
Let us first consider the case $y >1$. The map $ y \mapsto t(\log(y)-\log(t))-\frac{1}{2}y +t$ goes to $-\infty$ as $y$ goes to $+\infty$ and is decreasing on $[2t, + \infty]$. Furthermore, for $y \in \R_+, |y-t|> 5t \iff y > 6t \implies t(\log(y)-\log(t))-\frac{1}{2}y +t < t(\log(6) - 2) <0 $. Hence for $y \geq \max(6t,1)$ such that $|y-t| \geq \lambda$ and since $\frac{1}{4b}-||k||_{\infty} >0$, 
\begin{align*}
    \kappa_{t,b}(y)\frac{1}{1-F(y)} &\leq C\frac{\sqrt{t}}{\sqrt{2b\pi}}\exp(-\frac{t}{b}(2-\log(6)) -(\frac{1}{4b}-||k||_{\infty}) y - \frac{1}{4b} y) \\
    &\leq C\frac{\sqrt{t}}{\sqrt{b}}\exp(-\frac{t}{b}(2-\log(6)) - \frac{1}{4b}) \leq C e^{- \frac{1}{4b}} < C_1.
\end{align*}
For $y$ such that $|y-t| \geq \lambda$ and $1 \leq y \leq 6t$, we have 
 \begin{align*}
    \kappa_{t,b}(y)\frac{1}{1-F(y)} 
    & \leq C \frac{\sqrt{t}}{\sqrt{2b\pi}}\exp(\frac{1}{b}(t(\log(y)-\log(t))-y +t)  +||k||_{\infty} y).
\end{align*}
The map $y \mapsto t(\log(y)-\log(t))-y +t $ has a maximum of $0$ at $y=t$ and is increasing on $[0,t]$ and decreasing on $[t, +\infty]$ hence $B_1(t) = -\min( t(\log(t-\lambda)-\log(t))+\lambda, t(\log(t+\lambda)-\log(t)) -\lambda) > 0$ is such that if $|y-t| \geq \lambda$ and $1 \leq y < 6t $, 
\begin{align*}
    \kappa_{t,b}(y)\frac{1}{1-F(y)} 
    & \leq C \frac{\sqrt{t}}{\sqrt{2\pi b}}\exp(-B_1(t)\frac{1}{b} +6||k||_{\infty} t)\leq C\frac{\sqrt{t}}{\sqrt{2\pi}}\exp(-B_1(t)\frac{1}{2b} +6||k||_{\infty} t) \leq C_2(t).
\end{align*}
And for any $T >0, \exists A >0$, $t \leq T \implies B_1(t) > A$.

Finally in the case $y < 1$, since $\lambda >2e$, we have $|y-t| >\lambda \implies t \geq 2e$ and 
\begin{align*}
    \kappa_{t,b}(y)\frac{1}{1-F(y)} \leq C\frac{\sqrt{t}}{\sqrt{2b\pi}}\exp(-\frac{t}{b}\log(2)  +||k||_{\infty}) < Ce^{-e\log(2)/2b} < C_3.
\end{align*}
Let $G(t) = \max(C_1,C_2(t),C_3)$, $G(t)$ verifies Assumption \ref{assump:compat}, and $B(t) = \min(e\log(2)/2, B_1(t)/2,1/4)$ verifies Assumption \ref{assump_ad_strong}. 

 \textbf{Case 2}  If $t \leq 2b \leq 2$, we have $|y-t| \geq \lambda \implies y \geq t + \lambda \geq 2e > 2b$ and since $b \leq \frac{1}{2||k||_{\infty}}$,
 \begin{align*}
    \kappa_{t,b}(y)\frac{1}{1-F(y)} &= \frac{y^{{1/4(t/b)^2}}e^{-y/b}}{b^{1/4(t/b)^2 + 1}\Gamma(1/4(t/b)^2+1)} e^{\int_0^y k(u) \dd u}\\
    & \leq \frac{1}{b} \Big( \frac{y}{b}\Big)^{1/4(t/b)^2}e^{-y/b+||k||_{\infty} y}  \leq \frac{1}{2e}\Big(\frac{y}{b}\Big)^2 e^{-y/4b} e^{-e/2b} \leq C_4.
\end{align*}
And $B(t) = e/2$ verifies Assumption \ref{assump_ad_strong}.\\

\textbf{Assumption \ref{assump:unif_int}}
Consider a compact set $I$. 

For $t \in I \cap [2b,L]$ (such an $L$ exists as $I$ is compact), since $ \inf_{u \in [2,+\infty[}(\Gamma(u)) = \Gamma(2) > 0$, we have
\begin{align} \label{eq:dom_kappa_1}
   \kappa_{t,b}(y) & = \frac{y^{\frac{t}{b}-1}e^{-y/b}}{b^{\frac{t}{b}} \Gamma\left(\frac{t}{b}\right)}  \leq \begin{cases}
       &\frac{y^{L/b-1} e^{-y/b}}{(b^{L/b} + b^2)\Gamma(2)} \quad \text{if $y \geq 1$}\\
       & \frac{y e^{-y/b}}{(b^{L/b} + b^2)\Gamma(2)} \quad \text{else}.
   \end{cases}
\end{align}
 The function defined by \eqref{eq:dom_kappa_1} is integrable on $\R_+$ as $L/b - 1 \geq 0$.
For $t \in I \cap [0, 2b]$, let $B = \inf_{u \in [1,2]}(\Gamma(u))$, we have 
\begin{align}\label{eq:dom_kappa_2}
   \kappa_{t,b}(y) & = \frac{y^{\frac{1}{4}(\frac{t}{b})^2}e^{-y/b}}{b^{\frac{1}{4}(\frac{t}{b})^2-1} \Gamma\left(\frac{1}{4}(\frac{t}{b})^2+1\right)} \leq \begin{cases}
       &\frac{y e^{-y/b}}{(1 + b^2)B} \quad \text{if $y \geq 1$}\\
       & \frac{e^{-y/b}}{(1 + b^2)B} \quad \text{else}.
   \end{cases}
\end{align}
The uniform bound can be taken to be $\eqref{eq:dom_kappa_2} + \eqref{eq:dom_kappa_1}$.\\

\textbf{Assumption \ref{assump:gamma_2_inf}}
Recall that $\gamma=\frac{1}{2}$ for the Gamma kernel. We have for $t > 2b$ and $r = 2,3$,
\begin{align}
    &\int_{\mathbb{R}_+} \kappa_{t,b}(y)^r\dd y  = \frac{r^{1-rt/{b}}\Gamma(rt/b -r+1)}{b^{r-1} \Gamma^r(t/b)} \substack{\sim\\ b \rightarrow 0} \frac{1}{(2\pi)^{\frac{r-1}{2}}r^{\frac{1}{2}}(tb)^{\gamma(r-1)}}. \label{eq:int_gamma_eq}
\end{align}
The result is straightforward for $t=0$. 
\end{proof}
We now present the proof of Propositions \ref{prop:gamma_2} and \ref{prop:gamma_2_global}. 
\begin{proof}[Proof of Propositions \ref{prop:gamma_2} and \ref{prop:gamma_2_global}]
\label{sec:proof_2.2}
The proof for Assumption \ref{assump_ad_strong} follows from the proof of Assumption \ref{assump:compat} in the proof of Proposition \ref{prop:gamma}.
We now verify that the Gamma kernel verifies Assumption \ref{assump:intt}. As the expression of the Gamma kernel differs for $t \geq 2b$ and $t < 2b$, we study the integrals by splitting them accordingly. Let us start with the integral on $[2b, +\infty]$. 

\textbf{Case 1} $y \geq 1$. Using the equivalent of the Gamma function \eqref{eq:equiv_gamma}, it follows that for $b \leq 1$,  
\begin{align*}
    \int_{2b}^{+\infty} \kappa_{t,b}(y) \dd t &= \frac{e^{-y/b}}{y}\int_{2b}^{+\infty} \Big(\frac{y}{b}\Big)^{t/b}\frac{1}{\Gamma(t/b)} \dd t  = \frac{be^{-y/b}}{y}\int_{2}^{+\infty} \Big(\frac{y}{b}\Big)^{u}\frac{1}{\Gamma(u)} \dd u \\
    & \leq C \frac{be^{-y/b}}{y} \int_{2}^{+\infty} \exp( -u( \log(u) - \log(y/b) -1))\sqrt{u} \dd u.
\end{align*}
Since $y \geq 1$, we have on $[e^2y, +\infty)$
\begin{align*}
    C \frac{be^{-y/b}}{y}\int_{e^2 y/b}^{+\infty} \exp( -u( \log(u) - \log(y/b) -1))\sqrt{u} \dd u   \leq C  be^{-1/b} \int_{e^2 y/b}^{+\infty} e^{-u}\sqrt{u} \dd u \leq C'  be^{-1/b}  \leq C'' .
\end{align*}
If $y/b > 2e^{-2}$,
since $u \mapsto u (\log(u) - \log(y/b) -1)$ has a global minimum of $-y/b$ at $y/b$, on $[2,e^2y]$, we have
\begin{align*}
     C \frac{be^{-y/b}}{y}\int_2^{e^2 y/b}\exp( -u( \log(u) - \log(y/b) -1))\sqrt{u} \dd u \leq C \frac{b}{y}\int_2^{e^2 y/b}\sqrt{u}\dd u  \leq Ce^4 \sqrt{\frac{y}{b}} \leq Ce^3 \sqrt{2}.
\end{align*}

\textbf{Case 2} For $y \leq 1$, we have
\begin{align*}
     \frac{e^{-y/b}}{y} \Big(\frac{y}{b}\Big)^{t/b}\frac{1}{\Gamma(t/b)} \leq   e^{-y/b} \frac{1}{b^{t/b}}\frac{1}{\Gamma(t/b)}
\end{align*}
and the result can be proved by similar computations as shown for $y \geq 1$. 

We now study the integral on $[0,2b]$.
For $t \in [0,2b]$, we have
\begin{align*}
    \int_0^{2b} \kappa_{t,b}(y) \dd t &=  e^{-y/b}\int_0^{2} \Big(\frac{y}{b}\Big)^{u^2/4} \frac{1}{\Gamma(u^2/4+ 1)} \dd u  \leq  e^{-y/b}2  \Big(\frac{y}{b}+1\Big) \leq C.
\end{align*} 
In any case, the integral of the kernel over $t$ is bounded by a constant independent of $y$ and $b$.

For $\int_I \kappa_{t,b}(y) ^2 \dd t $, we have by the proof of Assumption \ref{assump:gamma_sup} in the proof of Proposition \ref{prop:gamma} that for some constant $C$ and $t \geq 2b$, $\sup_{y \in \R_+} \kappa_{t,b}(y)\leq \frac{C}{\sqrt{2\pi b t}} $. Hence $\sup_{t \geq 2b} \sup_{y \in \R_+} \kappa_{t,b}(y)\leq \frac{C}{2b\sqrt{\pi }}$. Similarly for $t < 2b$, $\kappa_{t,b}(y) \leq e^{-y/b}(y/b+1)$. 
Thus $\int_I \kappa_{t,b}(y) ^2 \dd t \leq C/b $. Hence the Gamma kernel verifies Assumption \ref{assump:intt} with $\eta = 1$. 
\end{proof}

\begin{prop} \label{prop:other_kernels}
    The Reciprocal inverse Gaussian, lognormal and Weibull kernels verify Definitions \ref{def:cont_ass} and \ref{def:order_beta} and Assumptions \ref{ass:order} to \ref{assump:intt}. 
\end{prop}
\begin{proof}
\textbf{Reciprocal inverse Gaussian (RIG) kernel.} The RIG kernel is defined for $t,y \in \R_+$ and $b >0$ by,
\begin{align*}
    \kappa_{t,b}(y) = \frac{1}{\sqrt{2\pi b y}} e^{-\frac{t-b}{2b}(\frac{y}{t-b} - 2 + \frac{t-b}{y})}. 
\end{align*}
\begin{itemize}
    \item Definition \ref{def:cont_ass} and Assumption \ref{ass:order} : As shown in \cite{RIG_kernel}, the kernel integrates to 1 and 
    \begin{align*}
    \E[Z_{t,b}] = t \text{ and } Var(Z_{t,b}) = b(t-b) + 2b^2. 
\end{align*}
So the RIG kernel verifies Definition \ref{def:cont_ass} and Assumption \ref{ass:order} with $\gamma = 1/2$ for $\beta$ up to 2. Furthermore, for $b\leq \min(1,t)$ and $A$ large enough such that $A >t$ and $\sqrt{\frac{\sqrt{b}A+t}{t-b}} - \sqrt{\frac{t-b}{\sqrt{b}A+t}}  > \frac{1}{2}\sqrt{\frac{\sqrt{b}A+t}{t-b} }> 1 $, 
\begin{align*}
    \int_{|y-t| > \eta} &(y-t)^2\frac{1}{\sqrt{2\pi yb}b} e^{-\frac{t-b}{2b}(\frac{y}{t-b} - 2 + \frac{t-b}{y})} \dd y \\
    & =   \int_{|u| > \eta/\sqrt{b}} \frac{u^2}{\sqrt{2\pi u}}  e^{-\frac{t-b}{2b}\big( \sqrt{\frac{\sqrt{b}u+t}{t-b}} - \sqrt{\frac{t-b}{\sqrt{b}u+t}}\big)^2} \dd u \\
    & \leq   \int_{|u| > A/\sqrt{b}} \frac{u^{3/2}}{\sqrt{2\pi}}  e^{-\frac{1}{4b}(\sqrt{b}u+t)} \dd u +  \int_{A/\sqrt{b} > |u| > \eta/\sqrt{b} } \frac{u^{3/2}}{\sqrt{2\pi}}   e^{-\frac{t-b}{2b}\big( \sqrt{\frac{\sqrt{b}u+t}{t-b}} - \sqrt{\frac{t-b}{\sqrt{b}u+t}}\big)^2} \dd u \\
    & \leq   \int_{|u| > A/\sqrt{b}} \frac{u^{3/2}}{\sqrt{2\pi}}  e^{-\frac{1}{4\sqrt{b}}u} \dd u + (A-\eta)\frac{A^{3/2}}{b^2\sqrt{2\pi}}   e^{-\frac{t-b}{2b} N}  \xrightarrow[b \to 0]{}0, 
\end{align*}
where $N = \sup_{A/\sqrt{b}> |u| > \eta/\sqrt{b} } \big( \sqrt{\frac{\sqrt{b}u+t}{t-b}} - \sqrt{\frac{t-b}{\sqrt{b}u+t}}\big)^2 > 0$. 
Thus, Assumption \ref{ass:order} is verified.
\item Assumption \ref{assump:gamma_sup}. We have 
\begin{align*}
    &\max_{y \in \R_+} (\kappa_{t,b}(y)) = \frac{1}{(t-b)\sqrt{2\pi b (1+\sqrt{b})}} e^{-\frac{t-b}{2b}((t-b)(1+\sqrt{b}) - 2 + \frac{1}{(t-b)(1+\sqrt{b})})} \leq C_s(t) b^{-1/2}. 
\end{align*}
\item Assumptions \ref{assump:compat} and \ref{assump_ad_strong} can be shown similarly as for the Gamma kernel for a bounded hazard, using the fact that for $y \gg 1$, the RIG kernel verifies $ \kappa_{t,b}(y) \sim  \frac{1}{\sqrt{2\pi b y}} e^{-\frac{y}{2b}}$ and the result holds for $1/b $ large enough compared to $||k||_{\infty}$. 
\item Assumption \ref{assump:unif_int} is straightforward to verify as $\kappa_{t,b}(y)$ is a continuous and bounded function of $t$. 
\item Assumption \ref{assump:gamma_2_inf}. We have 
\begin{align*}
    &\int_{\R_+} \kappa_{t,b}(y)^2 \dd y = \frac{1}{2\sqrt{b\pi}(t-b)},\\
    & \int_{\R_+} \kappa_{t,b}(y)^3 \dd y = \frac{1}{\pi \sqrt{3} (t-b) b}.
\end{align*}
Where the result on $\int_{\R_+} \kappa_{t,b}(y)^2 \dd y $ is shown by noticing that $\int \frac{1}{y} e^{-\alpha y - \beta/y} \dd y = - \frac{\dd}{\dd \beta} \int e^{-\alpha y - \beta/y} \dd y$ and using 3.324 in \cite{Int_calcul} and the result on $\int_{\R_+} \kappa_{t,b}(y)^3 \dd y $ is shown by using Glasser's master theorem. 
\item The first equation of Assumption \ref{assump:intt} holds as 
\begin{align*}
    \int_{\R_+} \frac{1}{\sqrt{2\pi b y}} e^{-\frac{t-b}{2b}(\frac{y}{t-b} - 2 + \frac{t-b}{y})} \dd t = \frac{1}{\sqrt{2\pi b y}} e^{-\frac{3y}{8b}}\int_{\R_+} e^{-\frac{1}{2b} (\frac{t-b}{\sqrt{y}} - \frac{\sqrt{y}}{2})^2} \dd t \leq  e^{-\frac{3y}{8b}}. 
\end{align*}
Furthermore, if $I$ is such that $t \in I \implies t -b > \mu$ for some $\mu >0$ and $\alpha>0$ is sufficiently small such that for $t > b+ \mu$ and $y< \alpha$,  $\frac{t-b}{\sqrt{y}} - \frac{\sqrt{y}}{2} > \frac{\mu}{\sqrt{y}}> 1$ 
\begin{align*}
    &\int_{I} \frac{1}{2\pi b y}  e^{-\frac{1}{b} (\frac{t-b}{\sqrt{y}} - \frac{\sqrt{y}}{2})^2}  \dd t \leq  \frac{1}{\sqrt{2\pi b y}} \int_{I} \frac{1}{\sqrt{2\pi b y}} e^{-\frac{1}{2b} (\frac{t-b}{\sqrt{y}} - \frac{\sqrt{y}}{2})^2}  \dd t \leq \frac{1}{\sqrt{2\pi b y}} e^{-\frac{3y}{8b}}.
\end{align*}
Thus, the RIG kernel verifies Assumption \ref{assump:intt} with $\eta = 0$. 
If $0 \in I$, the RIG kernel verifies Assumption \ref{assump:intt} with $\eta = 1$. 
\end{itemize}
\textbf{Lognormal kernel.} The lognormal kernel is defined for $y,t \in \R_+$ and $b>0$ by 
\begin{align*}
    \kappa_{t,b}(y) = \frac{1}{yb\sqrt{2\pi}}e^{-\frac{1}{2}\Big(\frac{\log(y/t)}{b} - b\Big)^2}.
\end{align*}
\begin{itemize}
\item Definition \ref{def:order_beta} and Assumption \ref{ass:order}: 
As shown in \cite{esstafa:hal-04112846}, the kernel integrates to 1 and 
\begin{align*}
    \E[Z_{t,b}] = t + (e^{3b^2/2} - 1)t \text{ and } Var(Z_{t,b}) = t^2 e^{3b^2}(e^{b^2} - 1).
\end{align*}
So the lognormal kernel verifies Definition \ref{def:cont_ass}  with $\gamma = 1$ for $\beta$ up to 2. 
Furthermore, Assumption \ref{ass:order} can be checked as follows. By noticing that for $b$ small enough, $\log((\eta + \E[Z_{t,b}])/t) > 0$ and $\log((-\eta + \E[Z_{t,b}])/t) < 0$, we have
\begin{align*}
   &\int_{|y - \E[Z_{t,b}]| > \eta}\frac{(y-\E[Z_{t,b}])^2}{yb^3} e^{-\frac{1}{2}\big(\frac{\log(y/t)}{b} - b\big)^2} \dd y  = \int_{|u| > \eta/b}\frac{u^2}{bu +\E[Z_{t,b}]} e^{-\frac{1}{2}\big(\frac{\log((bu +\E[Z_{t,b}])/t)}{b} - b\big)^2} \dd u \\
   & = \int_{v > A/b}\frac{v^2}{b^3(v +\E[Z_{t,b}])} e^{-\frac{1}{2}\big(\frac{\log((v +\E[Z_{t,b}])/t)}{b} - b\big)^2} \dd v + \int_{A/b^2 > u > \eta/b}\frac{u^2}{bu +\E[Z_{t,b}]} e^{-\frac{1}{2}\big(\frac{\log((bu +\E[Z_{t,b}])/t)}{b} - b\big)^2} \dd u\\
   & \qquad + \int_{ -\E[Z_{t,b}]/b< u < -\eta/b}\frac{u^2}{bu +\E[Z_{t,b}]} e^{-\frac{1}{2}\big(\frac{\log((bu +\E[Z_{t,b}])/t)}{b} - b\big)^2} \dd u\\
   & \leq\int_{v > A/b}\frac{v^2}{(v +\E[Z_{t,b}])} e^{-\frac{1}{4}\big(\frac{\log((v +\E[Z_{t,b}])/t)}{b} - b\big)^2} \dd v +  \frac{A^3}{b^6(\eta +\E[Z_{t,b}])} e^{-\frac{1}{2}\big(\frac{\log((\eta +\E[Z_{t,b}])/t)}{b} - b\big)^2} \\
   & \qquad + \frac{\E[Z_{t,b}]^2}{b^2} \int_{ z <  \log(-\eta+ \E[Z_{t,b}])/b}  e^{-\frac{1}{2}\big(z - b\big)^2 }\dd z\\
    & \leq\int_{v > A/b}\frac{v^2}{(v +\E[Z_{t,b}])} e^{-\frac{1}{4}\big(\frac{\log((v +\E[Z_{t,b}])/t)}{b} - b\big)^2} \dd v +  \frac{A^3}{b^6(\eta +\E[Z_{t,b}])} e^{-\frac{1}{2}\big(\frac{\log((\eta +\E[Z_{t,b}])/t)}{b} - b\big)^2} \\
   & \qquad + C\E[Z_{t,b}]^2 \int_{ z <  \log(-\eta+ \E[Z_{t,b}])/b}  e^{-\frac{1}{4}\big(z - b\big)^2 }\dd z \xrightarrow[b \to 0]{}0. 
\end{align*}

\item Assumption \ref{assump:gamma_sup} is verified as $\max_{y \in \R_+^*} \kappa_{t,b}(y) = \frac{e^{-b^2/2}}{tb\sqrt{2\pi}} \leq C_s(t) b^{-1}$. 
\item Assumptions \ref{assump:compat} and \ref{assump_ad_strong} are verified for kernels such that $\int_{0}^y k(u) \dd u  =_{y \to + \infty} O( \log(y)^2)$ as for $y \gg 0, \kappa_{t,b}(y) \sim \frac{1}{2b \sqrt{2\pi}}e^{-\frac{1}{2b^2}\log(y)^2}$. Hence the result is verified for $\frac{1}{b^2}$ large enough compared to $\sup_{y>1} (\int_0^y k(u) \dd u/\log(y)^2).$
\item Assumption \ref{assump:unif_int} is easily  checked as the kernel is bounded in $t$ on any compact set of $\R_+$. 
\item Assumption \ref{assump:gamma_2_inf} can be shown with the change of variables $z = \log(y/x)/b$, and we have 
\begin{align*}
    &\int_{\R_+^*} \kappa_{t,b}(y)^2 \dd y = \frac{1}{2\pi b} \int_{\R}t  e^{bz}e^{-(z - b)^2} \dd z, \\
    &\int_{\R_+^*} \kappa_{t,b}(y)^3 \dd y = \frac{1}{(2\pi)^{3/2} b^2} \int_{\R}t^2  e^{2bz}e^{-3/2(z - b)^2} \dd z. 
\end{align*}
\item Assumption \ref{assump:intt} is shown in \cite{esstafa:hal-04112846} for $\eta = 0$ and we have for $y \in \R_+$, 
\begin{align*}
    \int_{t\in \R_+} \kappa_{t,b}(y) \dd t = e^{-b^2/2} \text{ and } \int_{t\in \R_+^*} \kappa_{t,b}(y)^2 \dd t = \frac{e^{-3b^2/4}}{2by\sqrt{\pi}}. 
\end{align*}
\end{itemize}
\textbf{Weibull kernel.} The Weibull kernel is defined for $t \in \R_+$, $y \in \R_+$ and $b>0$ by 
\begin{align*}
    \kappa_{t,b}(y) = \frac{1}{tb} \big(\frac{y}{t}\big)^{1/b-1} e^{-\big(\frac{y}{t}\big)^{1/b} }.
\end{align*}
\begin{itemize}
    \item Definition \ref{def:cont_ass} and Assumption \ref{ass:order}. It is shown in \cite{esstafa:hal-04112846} that the kernel integrates to 1 and that 
    \begin{align*}
        \E[Z_{t,b}] = t \Gamma(1+b) \text{ and } Var(Z_{t,b}) = t^2 (\Gamma(1+2b) - \Gamma^2(1+b)). 
    \end{align*}
    So the kernel verifies Definition \ref{def:order_beta} for an order $\beta$ up to $2$ and $\gamma = 1$.
  Furthermore, Assumption \ref{ass:order} can be checked as follows,
   \begin{align*}
        &\int_{|y-\E[Z_{t,b}]| > \eta} b^{-2}(y-\E[Z_{t,b}])^2 \frac{1}{tb} \big(\frac{y}{x}\big)^{1/b-1} e^{-\big(\frac{y}{x}\big)^{1/b} } \dd y  \\
        &=  \int_{u > A/b^2} \frac{u^2}{t} \big(\frac{bu+\E[Z_{t,b}]}{t}\big)^{1/b-1} e^{-\big(\frac{bu+\E[Z_{t,b}]}{t}\big)^{1/b} } \dd u +  \int_{A> b^2u > \eta/b} \frac{u^2}{t} \big(\frac{bu+\E[Z_{t,b}]}{t}\big)^{1/b-1} e^{-\big(\frac{bu+\E[Z_{t,b}]}{t}\big)^{1/b} } \dd u\\
        & \qquad +  \int_{-\E[Z_{t,b}]/b < u < -\eta/b} \frac{u^2}{t} \big(\frac{bu+\E[Z_{t,b}]}{t}\big)^{1/b-1} e^{-\big(\frac{bu+\E[Z_{t,b}]}{t}\big)^{1/b} } \dd u\\
        & \leq C \int_{u > A/b^2} \frac{u^2}{t} e^{-\frac{1}{2}\big(\frac{bu+\E[Z_{t,b}]}{t}\big)^{1/b} } \dd u ++  C\frac{A^3}{tb^6} e^{-\frac{1}{2}\big(\frac{\eta+\E[Z_{t,b}]}{t}\big)^{1/b} } + \frac{E[Z_{t,b}]^2}{b^2} \int_{0 < z < ((-\eta+ E[Z_{t,b}])/t)^{1/b}}  e^{-z } \dd z\\
         & \leq C \int_{v > A/b} \frac{v^2}{t} e^{-\frac{1}{4}\big(\frac{v+\E[Z_{t,b}]}{t}\big)^{1/b} } \dd v +  C\frac{A^3}{tb^6} e^{-\frac{1}{2}\big(\frac{\eta+\E[Z_{t,b}]}{t}\big)^{1/b} } + \frac{E[Z_{t,b}]^2}{b^2} ((-\eta+ E[Z_{t,b}])/t)^{1/b}\\
         & \xrightarrow[b \to 0]{}0.
    \end{align*}

\item Assumption \ref{assump:gamma_sup} is verified as $\max_{y \in \R_+} \kappa_{t,b}(y) = (1-b)^{1-b}e^{-(1-b)}/tb\leq C_s(t) b^{-1}$.
\item Assumptions \ref{assump:compat} and \ref{assump_ad_strong} are verified for bounded kernels provided $b< 1$. 
\item Assumption \ref{assump:unif_int} holds as for any fixed $y$, $\kappa_{t,b}(y)$ as a function of $t$ is continuous and bounded on any compact subset of $\R_+$. 
\item Assumption \ref{assump:gamma_2_inf},
\begin{align*}
   & \int_{\R_+} \frac{1}{t^2b^2} \big(\frac{y}{b}\big)^{2/b-2} e^{-2\big(\frac{y}{b}\big)^{1/b-1} } \dd y = \frac{1}{tb }\int_{\R_+} u^2 e^{-2u } \dd u,\\
    &\int_{\R_+} \frac{1}{t^3b^3} \big(\frac{y}{b}\big)^{3/b-3} e^{-3\big(\frac{y}{b}\big)^{1/b-1} } \dd y = \frac{1}{t^2b^2 }\int_{\R_+} u^3 e^{-3u } \dd u .
\end{align*}
\item Assumption \ref{assump:intt} is shown in \cite{esstafa:hal-04112846} for $\eta = 0$ and we have 
\begin{align*}
    \int_{t\in \R_+} \kappa_{t,b}(y) \dd t = \Gamma(1-b) \text{ and } \int_{t\in \R_+} \kappa_{t,b}(y)^2 \dd t = \frac{2^b\Gamma(2-b)}{4b}. 
\end{align*}
\end{itemize}
\end{proof}

\color{black}

\subsection{Proof of Asymptotic normality}
\label{sec:proof_norm}
We now present the proof to Theorem \ref{thm:as_nor} in Section \ref{subsec:normal}, which is an adaptation of the proof of Theorem 3 presented in \cite{Tanner_Wong_HR_TCL}. We begin with a technical lemma.

\begin{lemma}\label{lemma:eq_var_norm}
Let $\tau$ be a random variable of hazard rate $k$ with $k$ continuous and bounded and $(\kappa_{t,b})$ an associated kernel verifying Definition \ref{def:cont_ass}, with $b_m \xrightarrow[m \to \infty]{} 0 $. We define
\begin{align}
    V_m(\tau) = \frac{1}{1-F(\tau)}(1 - F^{m}(\tau))\kappa_{t,b_m}(\tau).
\end{align}

Then under Assumptions \ref{assump:gamma_sup} and \ref{assump:gamma_2_inf}, we have for $r \in \{1,2,3\}$
    \begin{equation}
    \E[|V_m|^r] =  (1-F(t))^{-r} f(t)\int_{\St} \kappa_{t,b_m}(y)^r \dd y  + o (b_m^{-(r-1)\gamma}).\label{eq:vm_exp}
\end{equation}
\end{lemma}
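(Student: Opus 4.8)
The plan is to compute $\E[|V_m|^r]$ directly by writing it as an integral against the density $f$ of $\tau$, and then to split the integration domain into a neighbourhood of $t$ and its complement, using the compatibility-type control and the $L^2$-concentration of $Z_{t,b_m}$ to show the far part is negligible. First I would write, since $\tau$ has density $f = k(1-F)$,
\begin{equation*}
\E[|V_m|^r] = \int_{\St} \frac{(1-F^m(y))^r}{(1-F(y))^r}\,\kappa_{t,b_m}(y)^r\, f(y)\,\dd y = \int_{\St} \frac{(1-F^m(y))^r}{(1-F(y))^{r-1}}\,\kappa_{t,b_m}(y)^r\, k(y)\,\dd y,
\end{equation*}
using positivity of $\kappa_{t,b_m}$ so that $|V_m|^r = V_m^r$. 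The target quantity is $(1-F(t))^{-r} f(t) \int_{\St}\kappa_{t,b_m}^r = (1-F(t))^{-(r-1)} k(t)\int_{\St}\kappa_{t,b_m}^r$, so up to the normalisation by $\int_{\St}\kappa_{t,b_m}^r$ (which is of exact order $b_m^{-(r-1)\gamma}$ by Remark \ref{rem:alpha_beta_bound} and Assumption \ref{assump:gamma_2_inf}) this is a statement that the $\kappa_{t,b_m}^r$-weighted average of the continuous, bounded function $y\mapsto (1-F^m(y))^r (1-F(y))^{-(r-1)} k(y)$ converges to its value at $t$.

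Next I would fix $\lambda>0$ from Assumption \ref{assump:compat} and split the integral over $\{|y-t|\le\lambda\}$ and $\{|y-t|>\lambda\}$. On the near set, I would first replace $1-F^m(y)$ by $1$ at cost controlled by $\sup_{|y-t|\le\lambda} F(y)^m \le F(t+\lambda)^m$, which tends to $0$ geometrically (since $k$ bounded forces $F(t+\lambda)<1$), multiplied by $\int_{\St}\kappa_{t,b_m}^r = O(b_m^{-(r-1)\gamma})$ — this product is $o(b_m^{-(r-1)\gamma})$ provided $b_m^{\gamma} m\to\infty$, which is an assumed hypothesis of the theorem where the lemma is used. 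Then I would write
\begin{equation*}
\Bigl| \int_{|y-t|\le\lambda} \kappa_{t,b_m}(y)^r\, g(y)\,\dd y - g(t)\int_{\St}\kappa_{t,b_m}(y)^r\,\dd y \Bigr| \le \sup_{|y-t|\le\lambda}|g(y)-g(t)|\!\!\int_{|y-t|\le\lambda}\!\!\kappa_{t,b_m}^r + |g(t)|\!\!\int_{|y-t|>\lambda}\!\!\kappa_{t,b_m}^r,
\end{equation*}
with $g(y)=k(y)(1-F(y))^{-(r-1)}$; but since $g$ is continuous at $t$ and $Z_{t,b_m}\to t$ in $L^2$, the first term is $o\!\bigl(\int_{\St}\kappa_{t,b_m}^r\bigr)$ — here I would argue more carefully using Assumption \ref{assump:u_dom} or, more simply, just use continuity together with the fact that $b_m^{-(r-1)\gamma}\kappa_{t,b_m}^r/\int\kappa_{t,b_m}^r$ behaves like a probability density concentrating at $t$; actually the cleanest route is to bound $\sup_{|y-t|\le\lambda}|g(y)-g(t)|$ crudely by $2\|g\|_{\infty,[t-\lambda,t+\lambda]}$ on the part $\{\varepsilon<|y-t|\le\lambda\}$ using Lemma \ref{lemma:conv_pointwise} to show $\int_{|y-t|>\varepsilon}\kappa_{t,b_m}^2 = o(1)\cdot\sup\kappa_{t,b_m}^{r-2}$, hence $o(b_m^{-(r-1)\gamma})$ by Assumption \ref{assump:gamma_sup}, and on $\{|y-t|\le\varepsilon\}$ use continuity to make $|g(y)-g(t)|\le\delta$; letting $\delta\to0$ then $\varepsilon\to0$ gives the $o$-statement.

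For the far set $\{|y-t|>\lambda\}$, I would bound $(1-F^m(y))^r\le 1$ and use $\kappa_{t,b_m}(y)^r = \kappa_{t,b_m}(y)^{r-1}\cdot\kappa_{t,b_m}(y)$, applying Assumption \ref{assump:gamma_sup} to the factor $\kappa_{t,b_m}(y)^{r-1}\le (C_s(t)b_m^{-\gamma})^{r-1}$ and Lemma \ref{lemma:conv_pointwise} to $\int_{|y-t|>\lambda}\kappa_{t,b_m}(y)\,k(y)(1-F(y))^{-(r-1)}\dd y$; the survival factor $(1-F(y))^{-(r-1)}$ is handled by Assumption \ref{assump:compat}, which says precisely that $\kappa_{t,b_m}(y)/(1-F(y))$ is bounded on this region, so that $\int_{|y-t|>\lambda}\kappa_{t,b_m}(y)(1-F(y))^{-(r-1)}k(y)\dd y \le G(t)^{r-1}\|k\|_\infty\, \mathbb{P}(|Z_{t,b_m}-t|>\lambda) = O(b_m^{2\gamma})$, which is $o(b_m^{-(r-1)\gamma})$ for $r\in\{1,2,3\}$. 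Combining the near and far estimates and dividing through, the whole error is $o\bigl(b_m^{-(r-1)\gamma}\bigr)$, which is \eqref{eq:vm_exp}. I expect the main obstacle to be the near-set argument for $r=1$, where the target is $O(1)$ and one cannot afford any polynomial loss in $b_m$: there the continuity argument must be done by the $\delta$-$\varepsilon$ splitting above rather than by a crude Lipschitz-type bound, and one must be careful that the replacement of $1-F^m$ by $1$ still produces an $o(1)$ term (this is where $b_m^\gamma m\to\infty$, which forces $F(t+\lambda)^m = o(1)$ trivially, suffices).
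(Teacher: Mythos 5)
Your proposal is correct and follows essentially the same route as the paper: compute $\E[|V_m|^r]$ as an integral against $f$, split the domain at $\{|y-t|\le\lambda\}$, use $F(t+\lambda)^m\to0$ to replace $1-F^m$ by $1$, use continuity of $f/(1-F)^r$ together with the concentration of $Z_{t,b_m}$ (via Assumption~\ref{assump:gamma_sup} to factor off $\kappa^{r-1}\le C_s(t)^{r-1}b_m^{-(r-1)\gamma}$ and Lemma~\ref{lemma:conv_pointwise} for the tail mass), and control the far region through the compatibility bound $\kappa_{t,b_m}/(1-F)\le G(t)$. One minor remark: your worry that the replacement of $1-F^m$ by $1$ requires $b_m^\gamma m\to\infty$ is unnecessary — since $F(t+\lambda)^m=o(1)$ and $\int\kappa^r=O(b_m^{-(r-1)\gamma})$, the product is automatically $o(b_m^{-(r-1)\gamma})$ regardless of the rate at which $b_m\to0$.
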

\begin{proof}
For  $r \in \{1,2,3\}$ and any $\lambda >0$, 
    \begin{align}
    \E[|V_m(\tau)|^r] &= \int_{\St} (1-F(y))^{-r}(1-F(y)^m)^r \kappa_{t,b_m}^r(y) f(y) \dd y \nonumber\\ \nonumber
    &\leq \int_{\St \cap \{|y-t| \leq \lambda\}} (1-F(y))^{-r}(1-F(y)^m)^r \kappa_{t,b_m}^r(y) f(y) \dd y \\ & \qquad  + \int_{\St \cap \{|y-t| > \lambda\}} (1-F(y))^{-r}(1-F(y)^m)^r \kappa_{t,b_m}^r(y) f(y) \dd y.\label{eq:var_power_r}
\end{align}
We have 
\begin{align}
    &\Big|\int_{\St \cap \{|y-t| \leq \lambda\}} (1-F(y))^{-r}(1-F(y)^m)^r \kappa_{t,b_m}^r(y) f(y) \dd y -\frac{f(t)}{(1-F(t))^r}\int_{\St}\kappa_{t,b_m}(y)^r \dd y \Big| \nonumber\\
    & \leq \Big|\int_{\St \cap \{|y-t| \leq \lambda\}}\frac{f(y)}{(1-F(y))^r} ((1-F(y)^m)^r -1)\kappa_{t,b_m}(y)^r  \dd y\Big| \nonumber\\
    & \qquad + \Big|\int_{\St \cap\{ |y-t| \leq \lambda\}}\left(\frac{f(y)}{(1-F(y))^r}- \frac{f(t)}{(1-F(t))^r}\right)  \kappa_{t,b_m}(y)^r  \dd y \Big|+ \Big|\frac{f(t)}{(1-F(t))^r}\int_{\St\cap\{|y-t|\geq \lambda \}}\kappa_{t,b_m}(y)^r \dd y\Big|.\label{eq:norm_decomp}
\end{align}

For the first term of \eqref{eq:norm_decomp}, it holds
\begin{align*}
    &\Big|\int_{\St \cap |y-t| \leq \lambda}\frac{f(y)}{(1-F(y))^r} ((1-F(y)^m)^r -1)\kappa_{t,b_m}(y)^r  \dd y\Big|\\
    &\leq (1-(1-F(t+\lambda)^m)^r) \sup_{[t-\lambda, t + \lambda] \cap \St} \Bigg(\frac{ f(y)}{(1-F(y))^r}\Bigg)
   \int_{\St \cap \{|y-t| \leq \lambda\}}\kappa_{t,b_m}(y)^r \dd y =  o(b_m^{-(r-1)\gamma}),
\end{align*}
by Remark \ref{rem:alpha_beta_bound} and as $F(t+\lambda)^m \to 0$.\\
The second term of \eqref{eq:norm_decomp} is such that 
\begin{align*}
    &\Big|\int_{\St \cap \{|y-t| \leq \lambda\}}\left(\frac{f(y)}{(1-F(y))^r}- \frac{f(t)}{(1-F(t))^r}\right)  \kappa_{t,b_m}(y)^r  \dd y \Big|\\
    & \leq  C_s(t) b_m^{-(r-1)\gamma}\int_{\St \cap\{ |y-t| \leq \lambda\}}\Big|\frac{f(y)}{(1-F(y))^r}- \frac{f(t)}{(1-F(t))^r}\Big|\kappa_{t,b_m}(y)  \dd y = o(b_m^{-(r-1)\gamma}),
\end{align*}
By 
Remark \ref{rem:alpha_beta_bound}
and since  $\int_{\St \cap \{|y-t| \leq \lambda\}}\Big|\frac{f(y)}{(1-F(y))^r}- \frac{f(t)}{(1-F(t))^r}\Big|\kappa_{t,b_m}(y)  \dd y \to 0$ by Definition \ref{def:cont_ass}. \\
Finally, for the third term of \eqref{eq:norm_decomp}, we have
\begin{align*}
    &\Big|\frac{f(t)}{(1-F(t))^r}\int_{\St\cap\{|y-t|\geq \lambda\} }\kappa_{t,b_m}(y)^r \dd y\Big| \leq \frac{f(t)}{(1-F(t))^r} C_s(t) b_m^{-(r-1)\gamma}\mathbb{P}(|Z_{t,b_m}-t| \geq \lambda)  =  o(b_m^{-(r-1)\gamma}),
\end{align*}
by once again using Assumption \ref{assump:gamma_sup} and by Definition \ref{def:cont_ass}.  

Hence, the first term of \eqref{eq:var_power_r} is equivalent to 
\begin{equation*}
    (1-F(t))^{-r} f(t) \int_{\St} \kappa_{t,b_m}(y)^r \dd y.
\end{equation*}
And the second term is such that 
\begin{equation*}
    \int_{\St \cap \{|y-t| > \lambda\}}\hspace{-0.2cm} (1-F(y))^{-r}(1-F(y)^m)^r \kappa_{t,b_m}^r(y) f(y) \dd y \leq  G(t)^{r-1} ||k||_{\infty} \mathbb{P}(|Z_{t,b_m}-t| \geq \lambda)\xrightarrow[m\to \infty]{}0 . 
\end{equation*}
Thus the second term is negligible compared to the first one for $r =1, 2,3$ and \eqref{eq:vm_exp} holds.
\end{proof}

This leads us to the proof of Theorem \ref{thm:as_nor}.

\begin{proof}[Proof of Theorem \ref{thm:as_nor}]
\textbf{Step 1}
We start by introducing an auxiliary estimator for which it will be easier to prove the asymptotic normality. 
Let $R_i$ be the ordered rank of $\tau_i$. Define $W_i = \frac{\kappa_{t,b_m}(\tau_i)}{m-N_{\tau_i^-}}$ such that $\hat{k}_m(t) = \sum_{i=1}^m W_i = W$. 

It is shown in \cite{Tanner_Wong_HR_TCL} (Lemma 2) that for all $i \leq m$ and $i \neq j$,
\begin{align}
   & \E[W_i|\tau_i] = \frac{1}{m}V_m(\tau_i),\\
    &\E[W_i|\tau_j] = \frac{1}{m-1}\int_{\St}\frac{\kappa_{t,b_m}(y)}{1-F(y)} f(y) (1 - F(y)^{m-1}) \dd y + \frac{1}{m(m-1)}U_m(\tau_i),
\end{align}
with $V_m(\tau_i)$ as defined in Lemma \ref{lemma:eq_var_norm} and 
\begin{align*}
    &U_m(\tau_i) = - \int_{\St \cap \{y \leq \tau_i\}}\frac{\kappa_{t,b_m}(y)}{(1-F(y))^2}\big(1 - F(y)^m - mF(y)^{m-1}(1-F(y))\Big) f(y) \dd y.
\end{align*}

We introduce $\hat{W} = \sum_{i=1}^m \E[W|\tau_i] - (m-1) \E[W]$ and $\Delta_m = - \int_{\St}F(y)^{m-1}\kappa_{t,b_m}(y)f(y) \dd y$
such that, by Lemma 2 in \cite{Tanner_Wong_HR_TCL},
\begin{align}
   \hat{W} - E[\hat{W}] = \sum_{i=1}^m \left(\frac{1}{m}V_m(\tau_i) +\frac{1}{m}U_m(\tau_i)+ \Delta_m\right). \label{eq:w_hat}
\end{align}
And $\forall 1\leq i \leq m$, 
\begin{equation*}
    \E\Big[\frac{1}{m}V_m(\tau_i) +\frac{1}{m}U_m(\tau_i)+ \Delta_m\Big] = \E[\E[W|\tau_i] - \E[W]]= 0.
\end{equation*}
Furthermore, by point (i)  in the proof of Theorem 3 in \cite{Tanner_Wong_HR_TCL},
\begin{equation}
    |U_m| =O(\sum_{i=1}^m 1/i) =  O(\log m) \qquad \Delta_m = O\left(\frac{1}{m(m-1)}\right).\label{eq:um_deltam}
\end{equation}

\textbf{Step 3}
Now we want to apply Lyapunov's central limit theorem to $\Hat{W}-\E[\hat{W}]$ as expressed by the sum in \eqref{eq:w_hat} (see e.g. \cite{billingsley1995probability} p.362). Using the bounds shown earlier, there remains to verify that there exists $\delta >0$ such that 
\begin{equation}
    m\E\Big[\Big|\frac{1}{m}V_m(\tau_i) +\frac{1}{m}U_m(\tau_i)+ \Delta_m\Big|^{2+\delta}\Big]Var(\hat{W})^{ -(2+\delta)/2}\xrightarrow[m\rightarrow + \infty]{}0.
\end{equation}
We set $\delta = 1$. 
As $\Delta_m$ is negligible compared to $V_m$ and $U_m$, it is sufficient to show that $Var(\hat{W})^{-3/2} m\E[|V_m(\tau_i)/m + U_m(\tau_i)/m|^3] $ goes to 0. 
In the same way as what is done in (iii) in the proof of Theorem 3 in \cite{Tanner_Wong_HR_TCL}, it can be shown that 
\begin{equation}
    Var(\hat{W}) = mVar(V_m/m + U_m/m + \Delta_m) = \frac{1}{m(1-F(t))}k(t) \alpha_{b_m}(t) + o((m b_m^{\gamma})^{-1}).
\end{equation}
By expanding under the expectation and using the equivalents of $U_m$ and $\E[|V_m|^r]$ given by \eqref{eq:um_deltam} and Lemma \ref{lemma:eq_var_norm}, we have that $Var(\hat{W})^{-3/2} m \E[|V_m/m + U_m/m|^3]$ is of the order of 
\begin{align}
    &\left(m^{-1} \int_{\St} \kappa_{t,b_m}^2(y) \dd y\right)^{-3/2}\frac{1}{m^2} \nonumber
    \cdot \Bigg(\int_{\St} \kappa_{t,b_m}^3(y) \dd y \nonumber\\
    & \qquad+ 3\int_{\St} \kappa_{t,b_m}^2(y) \dd y \cdot log(m) + 3\int_{\St} \kappa_{t,b_m}(y) \dd y \cdot log(m)^2 + log(m)^3\Bigg) \xrightarrow{}0.\label{eq:conv_0_int_k}
\end{align}
This is shown using Assumptions \ref{assump:gamma_sup}, \ref{assump:gamma_2_inf}, Remark \ref{rem:alpha_beta_bound} and $mb_m^{\gamma} \rightarrow 0$.
By applying Lyapunov's central limit theorem to $\hat{W}$, we obtain 
\begin{equation*}
    \frac{\hat{W} - \E[\hat{W}]}{\sqrt{Var(\hat{W})}} \rightarrow \mathcal{N}(0,1).
\end{equation*}
By Theorem 3 (iii) in \cite{Tanner_Wong_HR_TCL}, $\hat{k}_m(t)$ and $\hat{W}$ have the same limiting distribution hence the result on $\hat{k}_m(t)$ follows. 

The expressions of the expectation and variance of $\hat{k}_m(t)$ are given by \eqref{eq:esp_km} and \eqref{eq:var_comp_k}.
\end{proof}

\subsection{Technical lemmas}
\label{sec:lemma_append}
The following two lemmas are technical lemmas needed to prove Proposition \ref{lemma:var_eq_k}. 

\begin{lemma}\label{lemma_var_F}
Let $F$ be a distribution function such that $\forall t \in \R_+$, $F(t) < 1$ then, 
    \begin{equation}
mI_m(y) := \sum_{i = 0}^{m-1} \binom{m}{i}\frac{F(y)^i(1-F(y))^{m-i}}{m-i} \xrightarrow[m\rightarrow + \infty]{}(1-F(y))^{-1}
\end{equation}
uniformly w.r.t. $y$ provided that $|t-y| \leq \lambda$.
\end{lemma}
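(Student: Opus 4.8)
Fix the point $t$ and the constant $\lambda>0$ from the statement, and write $p=F(y)$, $q=1-F(y)$. The idea is to read the sum as an expectation with respect to a binomial law. Let $X_m$ be a $\mathrm{Bin}(m,p)$ random variable, so that $\mathbb P(X_m=i)=\binom{m}{i}p^iq^{m-i}$. Since the summation index runs only over $i\le m-1$, multiplying by $m$ produces the \emph{exact} identity
\[
  m I_m(y)\;=\;\sum_{i=0}^{m-1}\frac{1}{1-i/m}\,\mathbb P(X_m=i)\;=\;\E\!\left[\frac{1}{1-X_m/m}\,\mathbf 1_{\{X_m\le m-1\}}\right].
\]
By the law of large numbers $X_m/m\to p$, and $x\mapsto(1-x)^{-1}$ is continuous near $p$, so the integrand should converge to $(1-p)^{-1}$; the only delicate point is that on the rare event $\{X_m/m\approx 1\}$ the integrand can be as large as $m$, and every estimate must be made uniform in $p$. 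Uniformity comes from the remark that every $y$ with $|t-y|\le\lambda$ satisfies $y\le t+\lambda$, hence $p=F(y)\le F(t+\lambda)=:p_0<1$ (using that $F$ is nondecreasing and $F<1$ everywhere by hypothesis).

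\textbf{Main step.} Put $\bar p:=(1+p_0)/2\in(p_0,1)$ and $\delta_0:=\bar p-p_0>0$, and set $A_m:=\{X_m/m\le\bar p\}$; on $A_m$ one has $X_m\le\bar p m<m$ (so $\mathbf 1_{\{X_m\le m-1\}}=1$ there) and $1-X_m/m\ge 1-\bar p$. Split $mI_m(y)$ according to $A_m$ and $A_m^c$. On $A_m^c$, bound $\frac{1}{1-X_m/m}\le m$ and apply a Hoeffding/Chernoff bound for the binomial: since $p\le p_0$,
\[
  \mathbb P(A_m^c)=\mathbb P\!\big(X_m/m\ge\bar p\big)\le\mathbb P\!\big(X_m/m-p\ge\delta_0\big)\le e^{-2m\delta_0^2},
\]
so the $A_m^c$-part contributes at most $m\,e^{-2m\delta_0^2}\to 0$, uniformly in $p\in[0,p_0]$. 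On $A_m$, write $\frac{1}{1-X_m/m}-\frac{1}{1-p}=\frac{X_m/m-p}{(1-X_m/m)(1-p)}$; there the denominator is $\ge(1-\bar p)(1-p_0)$, so using $\E|X_m/m-p|\le(pq/m)^{1/2}\le(2\sqrt m)^{-1}$,
\[
  \Big|\,\E\!\big[\big(\tfrac{1}{1-X_m/m}-\tfrac{1}{1-p}\big)\mathbf 1_{A_m}\big]\,\Big|\le\frac{1}{2(1-\bar p)(1-p_0)\sqrt m}.
\]
Combining this with $\frac{1}{1-p}\,\mathbb P(A_m^c)\le(1-p_0)^{-1}e^{-2m\delta_0^2}$ and the triangle inequality gives
\[
  \Big|\,m I_m(y)-\frac{1}{1-F(y)}\,\Big|\;\le\; m\,e^{-2m\delta_0^2}\;+\;\frac{e^{-2m\delta_0^2}}{1-p_0}\;+\;\frac{1}{2(1-\bar p)(1-p_0)\sqrt m}.
\]
The right-hand side tends to $0$ as $m\to\infty$ and depends on $y$ only through the fixed constant $p_0=F(t+\lambda)$, which is exactly the asserted uniform convergence on $\{|t-y|\le\lambda\}$.

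\textbf{Anticipated difficulty.} I do not expect a genuine obstacle: the binomial rewriting is an identity, and once the event $\{X_m/m\ge\bar p\}$ is peeled off with an exponential tail bound, everything reduces to a second-moment estimate. The single point requiring care is that the exponential rate $\delta_0$ and the constants $1-\bar p$, $1-p_0$ must be extracted from the \emph{fixed} threshold $p_0=F(t+\lambda)<1$ rather than from $p=F(y)$ itself — which is precisely why the restriction $|t-y|\le\lambda$ enters (were $F(y)$ allowed to approach $1$, neither the tail bound nor the Lipschitz constant would be uniform). A purely analytic alternative is to use $\frac1j=\int_0^1 x^{j-1}\,dx$ to obtain $m I_m(y)=m\int_{p}^1\frac{u^m-p^m}{u-p}\,du$ and then split this integral near $u=1$; this also works but the probabilistic route makes the uniformity more transparent.
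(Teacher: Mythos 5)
Your proof is correct and, importantly, it is self-contained, whereas the paper does not actually prove this lemma: it simply refers to Lemma~6 of the reference \cite{Hazard2} (Watson and Leadbetter's second hazard-analysis paper). There is therefore no ``paper's proof'' to compare decompositions with; what you have done is supply the elementary argument that the paper outsources.

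The structure of your argument is clean and the key observation is exactly the right one. Rewriting
\[
  m I_m(y)=\E\!\left[\tfrac{1}{1-X_m/m}\,\mathbf 1_{\{X_m\le m-1\}}\right],\qquad X_m\sim\mathrm{Bin}(m,F(y)),
\]
is an identity, and the whole lemma reduces to a uniform law-of-large-numbers statement for a function $x\mapsto(1-x)^{-1}$ that blows up at $x=1$. The restriction $|t-y|\le\lambda$ is precisely what caps $p=F(y)$ at $p_0=F(t+\lambda)<1$, which lets you peel off the event $\{X_m/m\ge\bar p\}$ with a Hoeffding bound whose rate $\delta_0=\bar p-p_0$ depends only on $t,\lambda$ and not on $y$; after that, the $L^1$ estimate $\E|X_m/m-p|\le (2\sqrt m)^{-1}$ together with the lower bound $(1-\bar p)(1-p_0)$ on the denominator gives a $O(m^{-1/2})$ term that is again $y$-uniform. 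All three terms in your final bound vanish and depend on $y$ only through $p_0$, so the uniformity claim is established. I see no gap; the alternative analytic route you mention (via $\frac1j=\int_0^1 x^{j-1}\,dx$, giving $m I_m(y)=m\int_p^1\frac{u^m-p^m}{u-p}\,du$) is essentially the route taken in the classical Watson--Leadbetter reference, so your probabilistic version is a genuinely different and, to my eye, more transparent way to see why $F(y)\le p_0<1$ is the only thing that matters.
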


\begin{proof}  
This result follows directly from Lemma 6 in \cite{Hazard2}.
\end{proof}

\begin{lemma}\label{lemma:var_2term1}
For $k \in \Sigma(\beta,L)$ and under Assumptions  \ref{assump:compat} and \ref{assump:gamma_2_inf}, we have
\begin{equation}
 \frac{m}{\alpha_{b_m}(t)} \int_{\St} \int_{y\leq z} 
 ( F(z)^m(1-F(y)^m)- \frac{1-F(y)}{F(z)-F(y)}(F(z)^m-F(y)^m)) \kappa_{t,b_m}(y)\kappa_{t,b_m}(z) k(y) k(z) \dd y \dd z \xrightarrow[m \to \infty ]{}0
  \end{equation}
\end{lemma}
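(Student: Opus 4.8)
The plan is to reduce the bracketed factor to a transparent form, then split the double integral into a \emph{near} part, where the factor is exponentially small, and a \emph{far} part, handled through the compatibility assumption.

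\textbf{Step 1 (algebraic reduction).} Fix $y\le z$ in $\St$, write $p=F(y)$, $q=F(z)$ (so $p\le q<1$), and set $\phi(u)=u^m(1-u)$. Using $\tfrac{q^m-p^m}{q-p}=\sum_{j=0}^{m-1}q^jp^{m-1-j}$ and expanding, one checks the identity
\[
g(y,z):=q^m(1-p^m)-\frac{1-p}{q-p}\bigl(q^m-p^m\bigr)=\frac{\phi(p)-\phi(q)}{q-p}-p^mq^m ,
\]
the value at $p=q$ being the obvious limit. The only facts about $\phi$ needed are $0\le \phi(u)\le \phi\bigl(\tfrac m{m+1}\bigr)\le \tfrac1{m+1}$ on $[0,1]$ and $|\phi'(u)|=u^{m-1}|m-(m+1)u|\le 1$. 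The continuity in $t$ of all the integrals below, hence the use of Fubini, is granted by Assumption~\ref{assump:unif_int}, and $\alpha_{b_m}(t)\to\infty$ with $\tfrac1{\alpha_{b_m}(t)}=O(b_m^\gamma)$ follows from Definition~\ref{def:cont_ass} and Assumption~\ref{assump:gamma_1}(ii).

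\textbf{Step 2 (the region $z\le t+\lambda$).} On $\{y\le z\le t+\lambda\}$ one has $p\le q\le F(t+\lambda)<1$, so by the mean value theorem $\bigl|\tfrac{\phi(p)-\phi(q)}{q-p}\bigr|\le \sup_{[p,q]}|\phi'|\le (m+1)F(t+\lambda)^{m-1}$ and $p^mq^m\le F(t+\lambda)^{2m}$; hence $|g(y,z)|$ is exponentially small in $m$ on this region. Since $\int_\St\kappa_{t,b_m}\le 1$ and $0\le k\le\|k\|_\infty$, its contribution is at most $\tfrac m{\alpha_{b_m}(t)}\,\|k\|_\infty^2\bigl((m+1)F(t+\lambda)^{m-1}+F(t+\lambda)^{2m}\bigr)$, which tends to $0$ because $\tfrac m{\alpha_{b_m}(t)}$ grows at most polynomially while $mF(t+\lambda)^{m-1}\to0$.

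\textbf{Step 3 (the region $z> t+\lambda$).} Here the compatibility Assumption~\ref{assump:compat} gives $\kappa_{t,b_m}(z)/(1-F(z))<G(t)$ for $b_m\le b_0$, so $\kappa_{t,b_m}(z)\,k(z)\,dz< G(t)\,dF(z)$. Freezing $y$ and substituting $v=F(z)$, the inner integral is reduced to one-dimensional ones bounded by elementary estimates on $\phi$: $\int_{F(y)}^1 v^m\,dv\le \tfrac1{m+1}$ (absorbing both $p^mq^m$ and the $\phi(q)$ contribution) and $\int_{F(y)}^1\frac{|\phi(F(y))-\phi(v)|}{v-F(y)}\,dv\le \tfrac{C}{m+1}\log m$ (split at $v=F(y)+\tfrac1{m+1}$, using $|\phi'|\le 1$ below and $\phi\le\tfrac1{m+1}$ above). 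It then remains to integrate these bounds against $\kappa_{t,b_m}(y)\,k(y)\,dy$: the nonsingular part is controlled by $\int_\St\kappa_{t,b_m}(y)k(y)\,dy\le\|k\|_\infty$, and the only part where $\tfrac1{v-F(y)}$ is genuinely singular is $\{F(y)\ge F(t+\lambda)\}=\{y>t+\lambda\}$, on which the $\kappa_{t,b_m}$-mass is $\mathbb{P}(Z_{t,b_m}>t+\lambda)=O(b_m^{2\gamma})$ by Lemma~\ref{lemma:conv_pointwise}. Altogether this region contributes at most $C(t)\bigl(\tfrac1m+\tfrac{\log m}{m}\,b_m^{2\gamma}\bigr)$, and multiplying by $\tfrac m{\alpha_{b_m}(t)}$ while using $\tfrac1{\alpha_{b_m}(t)}=O(b_m^\gamma)$ makes it $O(b_m^\gamma)+O(b_m^{3\gamma}\log m)\to0$.

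\textbf{Main difficulty.} Unlike the classical compactly supported symmetric kernel, where the whole integral lives on a box of side $\sim b_m$ around $t$ and is dispatched in one line, here there is a genuine far region and $g$ carries the diagonal singularity $\{F(y)=F(z)\}$. The delicate point is Step~3: controlling the $\kappa_{t,b_m}$-mass just inside the level $t+\lambda$, where the compatibility assumption does not yet apply — this is exactly where the concentration estimate of Lemma~\ref{lemma:conv_pointwise} is used, and it is also the reason Assumption~\ref{assump:compat} cannot be dispensed with.
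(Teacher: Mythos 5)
Your algebraic reduction to $g(y,z)=\frac{\phi(p)-\phi(q)}{q-p}-p^mq^m$ with $\phi(u)=u^m(1-u)$ is correct and elegant, Step~2 is fine, and the overall architecture (near region killed by $F(t+\lambda)<1$, far region reduced via Assumption~\ref{assump:compat} to a one-dimensional integral against $dF$) is sound and genuinely different from the paper's, which simply invokes Lemmas 11 and 12 of \cite{Hazard2} together with $\alpha_{b_m}\to\infty$. However, Step~3 as written does not close. Your crude near/far split at $v=p+\tfrac1{m+1}$ only yields $\int_p^1\frac{|\phi(p)-\phi(v)|}{v-p}\,dv\le C\,\tfrac{\log m}{m}$, and after applying the mass bound $\mathbb{P}(Z_{t,b_m}>t+\lambda)=O(b_m^{2\gamma})$ you arrive at a term of order $b_m^{3\gamma}\log m$ and declare it $\to 0$. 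But the only rate information on $(b_m)$ available here is $b_m\to 0$ (Proposition~\ref{lemma:var_eq_k} adds $mb_m^\gamma\to\infty$, which is orthogonal); if, say, $b_m^\gamma=1/\log\log m$ then $b_m^{3\gamma}\log m\to\infty$, so the claimed convergence is not justified by the stated hypotheses. This is a genuine gap, not merely a presentation issue.

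The fix is that the one-dimensional estimate can be made $\log$-free: one has $\sup_{p\in[0,1)}\int_p^1\frac{|\phi(p)-\phi(v)|}{v-p}\,dv=O(1/m)$. Indeed, $\int_p^1\frac{|\phi(p)-\phi(v)|}{v-p}\,dv\le\int_p^1\frac{1}{v-p}\int_p^v|\phi'(u)|\,du\,dv=\int_p^1|\phi'(u)|\ln\tfrac{1-p}{u-p}\,du$ by Tonelli, and with $|\phi'(u)|\le u^{m-1}(m(1-u)+1)\le e^{-(m-1)(1-u)}(m(1-u)+1)$, the substitutions $u=1-w$, then $w=(1-p)(1-s)$, and finally $r=(m-1)(1-p)(1-s)$ reduce the integral to $\tfrac1{m-1}\int_0^M e^{-r}(2r+1)\bigl(-\ln(1-r/M)\bigr)\,dr$ with $M=(m-1)(1-p)$, which is bounded by a universal constant uniformly in $M>0$ (splitting at $r=M/2$: the near part is $\le 10/M$ via $-\ln(1-x)\le 2x$, the far part is $O(M^2e^{-M/2})$; for $M<1$ the trivial bound $1-p=M/(m-1)$ is already $O(1/m)$). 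With this uniform $O(1/m)$ estimate, the whole Step~3 contributes $O(1/m)$ before any mass dichotomy, multiplying by $m/\alpha_{b_m}(t)$ gives $O(1/\alpha_{b_m}(t))\to 0$, and the split at $\{y>t+\lambda\}$ — and with it the appeal to Lemma~\ref{lemma:conv_pointwise} and the problematic $\log m$ — becomes unnecessary.
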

\begin{proof}
The proof follows directly from Lemma 11 and 12 in \cite{Hazard2} using the fact that $\int \kappa_{t,b_m}(y) k(y) \dd y$ is bounded, Assumption \ref{assump:compat} and the fact that $\alpha_{b_m} \xrightarrow[m \to \infty ]{}+\infty$ with Assumption \ref{assump:gamma_2_inf}. 
\end{proof}

\subsection{Proof of Lemma \ref{lemma:emp_distrib}}

We introduce the empirical distribution function 
\begin{equation*}
    \Tilde{F}_m(x) =\frac{1}{m} \sum_{i=1}^m \one_{\{ \tau_i \leq x \}}.
\end{equation*}
By the Dvoretzky-Kiefer-Wolfowitz Inequality (see e.g. \cite{Vaart_1998},p.346), we have for any $\eta >0$, 
\begin{equation}
    \mathbb{P}(|| \Tilde{F}_m - F||_{\infty}\geq \eta) \leq 2e^{-2m\eta^2}. \label{eq:DKW}
\end{equation}
Since $||\tilde{F}_m - \hat{F}_m || _{\infty} \leq \frac{1}{m}$, we have for $\eta > 1/m$
\begin{align*}
    \mathbb{P}(|| \hat{F}_m - F||_{\infty}\geq \eta) &\leq \mathbb{P}(|| \Tilde{F}_m - F||_{\infty}\geq \eta - \frac{1}{m}) \leq 2e^{-2m(\eta-1/m)^2}  \leq 2e^{-2m\eta^2 + 4\eta}. 
\end{align*}

Thus for any $c_0 \geq \max(\sqrt{l/2}, 1/m)$, 
\begin{align*}
    \mathbb{P}(|| \hat{F}_m - F||_{\infty}\geq c_0 \sqrt{m^{-1}log(m)})\leq 2e^{-2c_0^2 log(m)}e^{4c_0\frac{\sqrt{\log(m)}}{\sqrt{m}}}\leq 2 m^{-2c_0^2}e^{4c_0} \leq c_l m^{-l}.
\end{align*}

Although equation \eqref{eq:DKW} is sufficient to conclude that for any positive integer $l$,  $\mathbb{P}(|| \hat{F}_m - F||_{\infty}\geq \eta) \leq C m^{-l}$ for some constant $C$ which depends on $\eta$ and $l$, we wish to obtain a more explicit result on the constant. The motivation for this is twofold, firstly from an application perspective, the values we will consider for $\eta$ (such as $c_F(t)$) will not necessarily be known and will have to be estimated in practice, it seems therefore judicious to know how they impact the constants in the problem. Secondly, from a theoretical perspective, as the constants we choose for $\eta$ may depend on $t$, it is convenient to know how exactly the upper bound constant also depends on $t$ in order to properly justify the integration of the upper bound when proving the global result.  We proceed as follows.

Furthermore, for a fixed $c>0$ and for any $l \in \mathbb{N}^*$, the Markov inequality yields
\begin{align*}
    \mathbb{P}(|| \hat{F}_m - F||_{\infty}\geq c)& \leq \frac{1}{c^{2l}}\E[|| \hat{F}_m - F||_{\infty}^{2l}]\\
    & \leq 2l \frac{1}{c^{2l}}\int_{0}^{+\infty} x^{2l-1}\mathbb{P}(|| \hat{F}_m - F||_{\infty}>x) \dd x\\
    & \leq 2l \frac{1}{c^{2l}}\Big(\int_{1/m}^{+\infty} x^{2l-1} 2e^{-2m(x-1/m)^2}\dd x + \int_{0}^{1/m}x^{2l-1} \dd x \Big)\\
    & \leq 2l \frac{1}{m^l c^{2l}}\Big(\int_{0}^{+\infty} (y+1)^{2l-1} 2e^{-2y^2}\dd y + \frac{1}{2l}\Big) = \frac{\tilde{c}_l}{c^{2l}} m^{-l}. 
\end{align*}

In turn, we have for any $c_F(t) >0$ and $x >0$, 
\begin{align*}
    \mathbb{P}(F(x)- \hat{F}_m(x) <  -c_F(t))\leq \mathbb{P}(|| \hat{F}_m - F||_{\infty}\geq c_F(t))\leq \tilde{c}_l \frac{1}{c_F(t)^{2l}}m^{-l}. 
\end{align*}
Hence 
\begin{equation*}
    \mathbb{P}(\Omega_{c_0,t}^c) \leq \mathbb{P}((\Omega_{c_0}^*)^c) + \mathbb{P}((\Omega_{t}^*)^c)\leq (c_l +\frac{\tilde{c}_l}{c_F(t)^{2l}}) m^{-l}.
\end{equation*}

\subsection{Additional figures}
\begin{figure}[H]
\begin{subfigure}{0.48\textwidth}
    \centering
    \includegraphics[scale=0.26]{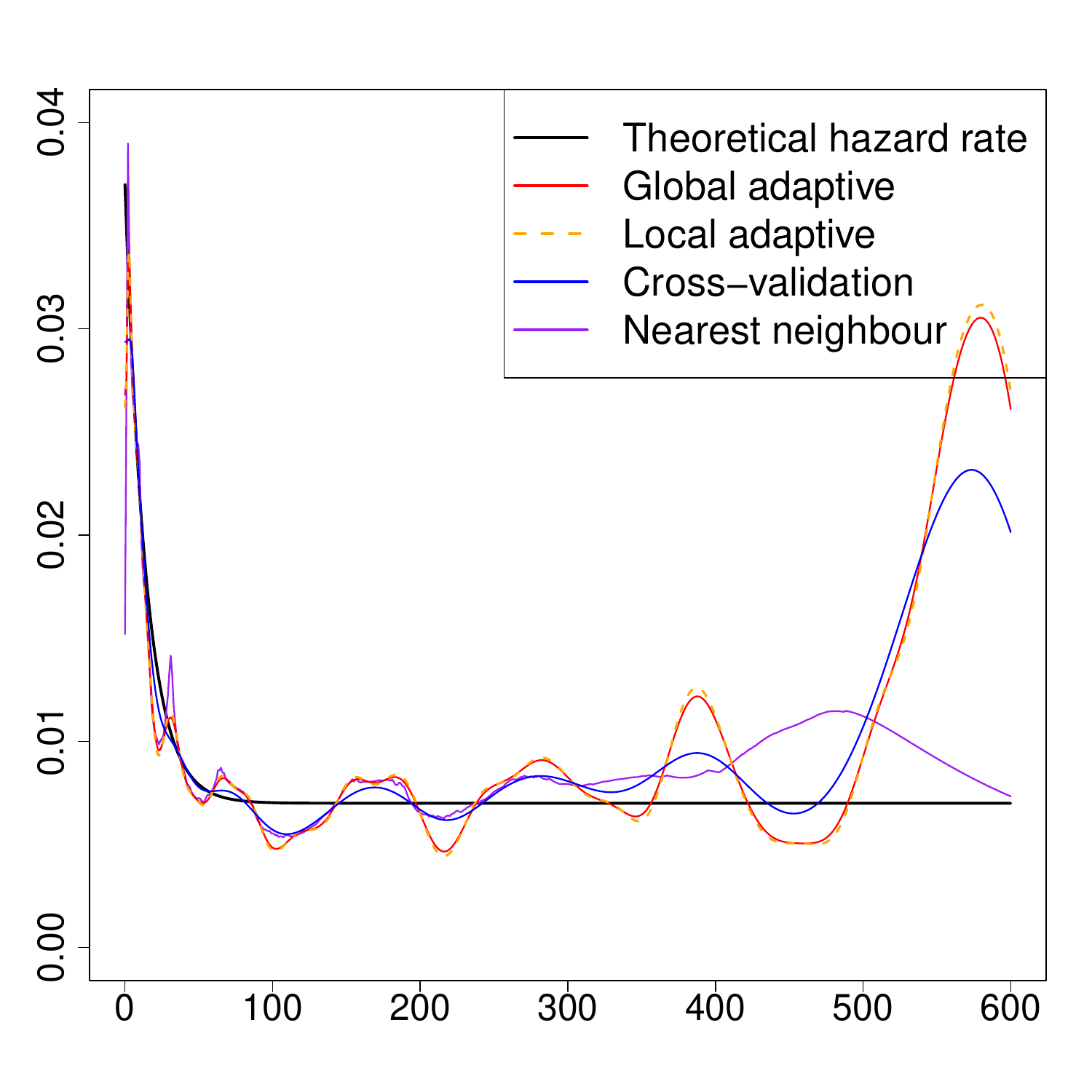}
    \caption{$m = 500$}
    \label{fig:bw_choice_comp500}
    \end{subfigure}
    \begin{subfigure}{0.48\textwidth}
    \centering
    \includegraphics[scale=0.26]{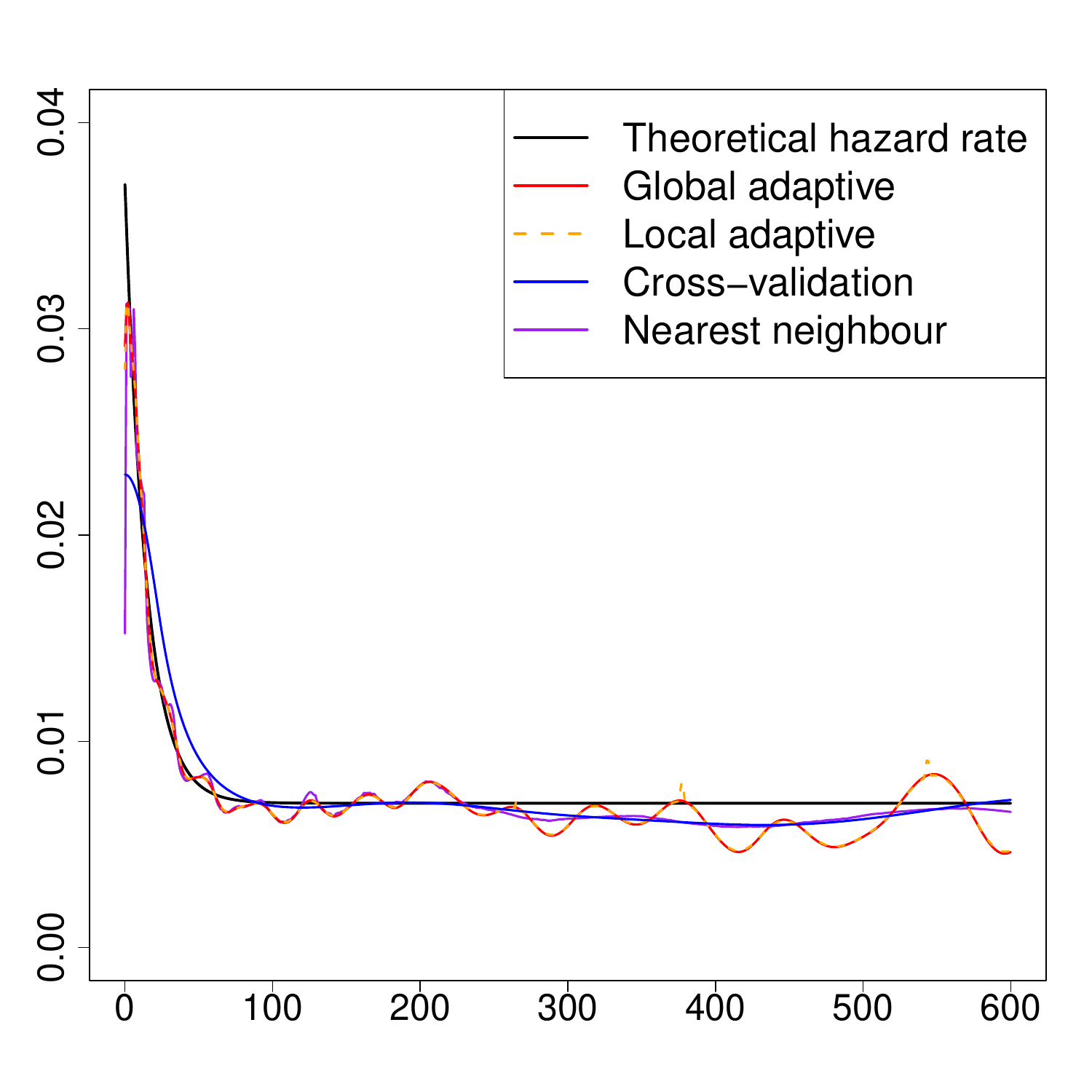}
    \caption{$m=2000$}
   \label{fig:bw_choice_comp2000}
    \end{subfigure}
    \caption{\textbf{Comparison of bandwidth choice methods} on a hazard rate $k(t) = a+c\cdot e^{-dt}$, $a = 7\cdot 10^{-3}, c = 3 \cdot 10^{-2}, d = 7 \cdot 10^{-2}$. }
    \label{fig:bw_choice_comp}
\end{figure}

\bibliographystyle{abbrv}
\bibliography{Biblio.bib}

\end{document}